\newcommand{\Rb}{\mathbbm{R}}      % for Real numbers
\newcommand{\Pb}{\mathbbm{P}}
\newcommand{\Ac}{\mathcal{A}}
\newcommand{\Dc}{\mathcal{D}}
\newcommand{\Qc}{\mathcal{Q}}
\newcommand{\Bb}{\mathbbm{B}}
\newcommand{\Eb}{\mathbbm{E}}
\newcommand{\Nb}{\mathbbm{N}}
\newcommand{\Zb}{\mathbbm{Z}}
\newcommand{\Fc}{\mathcal{F}}
\newcommand{\Kc}{\mathcal{K}}
\newcommand{\Nc}{\mathcal{N}}
\newcommand{\Pc}{\mathcal{P}}
\newcommand{\Sc}{\mathcal{S}}
\newcommand{\Vc}{\mathcal{V}}
\newcommand{\Xc}{\mathcal{X}}
\newcommand{\Tc}{\mathcal{T}}
\newcommand{\1}{\mathbbm{1}}
\newcommand{\mub}{\mathbbm{\mu}}
\newcommand{\nub}{\mathbbm{\nu}}
\newcommand{\argmin}{\mathop{\rm argmin}}
\newcommand{\dist}{\mathop{\rm dist}}
\newcommand{\D}{\mathop{\text{\rm d}\!}}
\newcommand{\Cdot}{\,\cdot\,}
\newcommand*{\dt}[1]{\overset{\hbox{\tiny${\;\,}_\bullet$}}{#1}}
\newcommand*\samethanks[1][\value{footnote}]{\footnotemark[#1]}
\newtheorem{theorem}{Theorem}[section]
\newtheorem{proposition}[theorem]{Proposition}
\newtheorem{lemma}[theorem]{Lemma}
\newtheorem{example}[theorem]{Example}
\newtheorem{definition}[theorem]{Definition}
\newtheorem{corollary}[theorem]{Corollary}
\newtheorem{remark}[theorem]{Remark}
\newenvironment{tightlist}[1]{%
    \list{{\textup{(\roman{enumi})}}}{\settowidth\labelwidth{{\textup{(#1)}}}
    \leftmargin 0pt \advance\leftmargin\labelsep \itemindent \parindent
    \parsep 0pt plus 1pt minus 1pt \topsep 0pt \itemsep 0pt
    \usecounter{enumi}}}{\endlist}
\definecolor{dblue}{rgb}{0,0,0.7}
\definecolor{plum}{rgb}{0.3,0,0.7}
\newcommand*\rel@kern[1]{\kern#1\dimexpr\macc@kerna}
\newcommand*\widebar[1]{%
  \begingroup
  \def\mathaccent##1##2{%
    \rel@kern{0.8}%
    \overline{\rel@kern{-0.8}\macc@nucleus\rel@kern{0.2}}%
    \rel@kern{-0.2}%
  }%
  \macc@depth\@ne
  \let\math@bgroup\@empty \let\math@egroup\macc@set@skewchar
  \mathsurround\z@ \frozen@everymath{\mathgroup\macc@group\relax}%
  \macc@set@skewchar\relax
  \let\mathaccentV\macc@nested@a
  \macc@nested@a\relax111{#1}%
  \endgroup
}
\title{A Functional Model Method for Nonconvex Nonsmooth Conditional Stochastic Optimization\thanks{Accepted for publication in \emph{SIAM Journal on Optimization}. This work was supported by the Office of Naval Research Award N00014-21-1-2161.}}
\author{
Andrzej Ruszczy\'{n}ski\thanks{Rutgers University,
Department of Management Science and Information Systems,
 94 Rockefeller Rd, Piscataway, NJ 08854, USA ({\tt rusz@rutgers.edu; shangzhe.yang@rutgers.edu}).}
 \and
 Shangzhe Yang\samethanks
 }
\begin{document}

\maketitle

\begin{abstract}
We consider stochastic optimization problems involving an expected value of a nonlinear function of a base random vector and a conditional expectation of another function depending on the base random vector, a dependent random vector, and the decision variables. We call such problems conditional stochastic optimization problems. They arise in many applications, such as uplift modeling, reinforcement learning, and contextual optimization.
We propose a specialized single time-scale stochastic method for nonconvex constrained conditional stochastic optimization problems with a Lipschitz smooth outer function and a generalized differentiable inner function. In the method, we approximate the inner conditional expectation with a rich parametric model whose mean squared error satisfies a stochastic version of a {\L}ojasiewicz condition. The model is used by an inner learning algorithm.
 The main feature of our approach is that unbiased stochastic estimates of the directions used by the method can be generated with one observation from the joint distribution per iteration, which makes it applicable to real-time learning. The directions, however, are not
 gradients or subgradients of any overall objective function. We prove the convergence of the method with probability one, using the method of differential inclusions and a specially designed Lyapunov function, involving a stochastic generalization of the Bregman distance. Finally,  a numerical illustration demonstrates the viability of our approach.\\
 \emph{Keywords:}
 Conditional Stochastic Optimization, Nonsmooth Optimization, Stochastic Subgradient Methods, Reparametrization.\\
 \emph{AMS:}
  90C15,  49J52, 60-08.
\end{abstract}

\section{Introduction}
\label{s:1}
The \emph{Conditional Stochastic Optimization} (CSO) problem is formulated as follows:
\begin{equation}
\min_{\beta \in {\Bb}} G(\beta) \triangleq \Eb \Big\{g\big(\Eb [f(X,Y,\beta) \,|\, X]\big)\Big\}
\label{cso}
\end{equation}
where $\Bb \subset \Rb^{n_\beta}$, $f: \Rb^{n_X} \times \Rb^{n_Y} \times \Rb^{n_\beta} \to \Rb^{n_f}$ , and $g: \Rb^{n_f} \to \Rb$. In formula \eqref{cso}, $X$ and $Y$ are random variables, and the symbols $\Eb[\Cdot]$ and $\Eb[\Cdot|X]$ represent the expected value and the conditional expected value, respectively.

Some important applications can be cast in this form, as illustrated in the examples below.

\begin{example}[\textit{Reinforcement Learning}]
\label{e:RL}
{\rm
Consider an infinite-horizon discounted Ma\-rkov Decision Process $\{\Sc, \Ac, \Pb, r, \gamma \}$ where $\Sc$ is a state space, $\Ac$ is an action space, $\Pb:\Sc\times \Ac \to \Pc(\Sc)$ is a controlled transition kernel, with $\Pc(\Sc)$ denoting the space of probability measures on the space $\Sc$, $r: \Sc \times \Ac \times \Sc \to \Rb$ is a reward function with $r(s, a, s')$ representing the immediate reward after transition from $s$ to $s'$ with action $a$, and $\gamma \in (0,1)$ is a discount factor. Suppose a kernel $\varPi:\Sc \to \Pc(\Ac)$ represents a stationary randomized Markov policy. Our goal is to evaluate the policy $\varPi$, that is, to find the function:
\[
Q^\varPi(s, a) = \Eb\bigg[\sum_{t=0}^\infty \gamma^t r(s_t, a_t, s_{t+1}) \,\bigg|\, s_0 = s, a_0 = a\bigg], \quad (s, a) \in \Sc \times \Ac,
\]
\begin{comment}
\[
V^\varPi(s) = \Eb\bigg[\sum_{t=0}^\infty \gamma^t r(s_t, a_t, s_{t+1}) \,\bigg|\, s_0 = s\bigg], \quad s\in \Sc,
\]
\end{comment}
where the random sequence of state--control pairs $(s_t,a_t)$ is generated by the process: $a_t \sim \varPi(s_t)$,
$s_{t+1} \sim \Pb^{a_t}(s_t)$. Here, $\Pb^a(s)$ represents the conditional distribution of the next state
when the present state is $s$ and the action is $a$.
The function $Q^\varPi(\Cdot, \Cdot)$ is the solution of the following equation:
\[
Q^\varPi(s, a) = \Eb_{{s'\sim \Pb^{a}(s)},\,{a' \sim \varPi(s')}} \big[ r(s, a, s') + \gamma \, Q^\varPi(s', a')\big], \quad (s, a) \in \Sc \times \Ac.
\]
\begin{comment}
\[
V^\varPi(s) = \Eb_{a \sim \varPi(s), s'\sim \Pb^{a}(s)}\big[ r(s, a, s') + \gamma \, V^\varPi(s')\big], \quad s\in \Sc.
\]
\end{comment}
Except for small-size problems, this functional equation  cannot be solved exactly, because it
is impossible to visit all state--action pairs.
Therefore, we resort to minimizing
the \textit{mean-squared Bellman error} (see \cite{dann2014policy} and the references therein):
\[
\min_{Q \in \Qc} \Eb_{{s\sim q^\varPi},\,{ a\sim \varPi(s)}}\Big[\big(Q(s, a) - \Eb_{s'\sim \Pb^a(s), a'\sim \varPi(s)}[r(s, a, s') + \gamma \,Q(s', a')]\big)^2\Big],
\]
\begin{comment}
\[
\min_{V \in \Vc} \Eb_{s\sim q^\varPi, a\sim \varPi(s)}\Big[\big(V(s) - \Eb_{s'\sim \Pb^a(s)}[r(s, a, s') + \gamma V(s')]\big)^2\Big],
\]
\end{comment}
where $q^\varPi$ is the stationary state distribution under the policy $\varPi$, and $\Qc$
is a class of functions in which we seek a good approximation of the policy value.
An established approach is to use
$\Qc = \{\varPhi(\Cdot, \Cdot, \beta): \beta \in {\Bb}\}$, with  a parametric family of functions $\varPhi:\Sc \times \Ac \times \Bb \to \Rb$.
The resulting problem can be treated as a CSO problem, where $X = \{s, a\}$, $Y = \{s', a'\}$, and:
\begin{equation}
    \label{f-g-MDP}
\begin{aligned}
f(X, Y, \beta) &= \varPhi(s, a,\beta) - r(s, a, s') - \gamma \,\varPhi(s', a', \beta), \\
g(u) &= u^2.
\end{aligned}
\end{equation}
The fundamental difficulty in {high-dimensional learning problems of this type} is that we can observe the triplets $(s,a,s')$, but we have no access to repeated observations of $s'$, for the same values of $s$ and $a$, unless we know the system's model and use a simulator.}
\end{example}

\begin{example}[\textit{Uplift Modeling}]
\label{e:uplift}
{\rm
 Suppose $X$ represents the (random) features of an individual, $T \in \{-1, 1\}$ is a random variable indicating whether the treatment has been applied to this individual, and a real random variable $Y$ represents the outcome observed.
The goal is to estimate the \emph{individual uplift} or \emph{individual treatment effect}
\[
u(x)\triangleq \Eb[Y|X=x,\,T=1] - \Eb[Y|X=x,\,T=-1].
\]
In the causal inference literature, this problem has been widely used for modeling personalized treatment and targeted marketing (see, e.g., \cite{jaskowski2012uplift,yamane2018uplift}).
Again, while the triplets $(X,T,Y)$ can be observed, we cannot observe two outcomes  { $Y_1(x)$ and $Y_{-1}(x)$} for the same
individual $X=x$, with and without treatment.

Suppose we know the individual treatment policy $\varPi(x) := \Pb[T=1|X=x]\in (0,1)$. We define an auxiliary random variable $Z$:
\[
Z(X,T,Y) =
\begin{cases}
\frac{Y}{\varPi(X)} & \text{ if } T = 1,\\
-\frac{Y}{1-\varPi(X)} & \text{ if } T = -1.
\end{cases}
\]
Then
\begin{align*}
\Eb[Z|X=x]
&= \Eb[Z|X=x, T=1] \varPi(x) + \Eb[Z|X=x, T=-1] (1-\varPi(x)) \\
&= \Eb[Y|X=x,T=1] - \Eb[Y|X=x,T=-1] = u(x).
\end{align*}
If we choose a model class $u(\Cdot) \approx \varPhi(\Cdot, \beta)$,
with some parameters $\beta \in {\Bb}$, then we can formulate the following problem:
\[
\min_{\beta \in {\Bb}} \Eb\big\{g\big(\varPhi(X, \beta), \Eb[Z|X]\big)\big\}.
\]
Here $g(\Cdot, \Cdot)$ may be an arbitrary loss function measuring the discrepancy between its arguments.
The last problem is a CSO problem, where $Z$ plays the role of $Y$,  and $f = [f_1, f_2]^T$, with
\begin{align*}
f_1(X,Z,  \beta) &= \varPhi(X, \beta), \\
f_2(X, Z, \beta) &= Z.
\end{align*}
}
\end{example}
\begin{example}[\textit{Contextual Optimization}]
\label{e:context}
{\rm
Suppose a \textit{context variable} $X$ is observed before a decision is made
in a stochastic optimization problem involving a random variable $Y$ dependent on $X$ (see \cite{kannan2022data,sadana2023survey} and the references therein).
Denoting by $u$ the
 decision variable, the contextual optimization problem is to find a \textit{decision rule} $u(X)$ that minimizes
the conditional cost $\Eb[ c(u(X),X,Y)|X]$, for all possible values of the context $X$. Here, $c:\Rb^{n_u}\times\Rb^{n_x}\times\Rb^{n_y}\to \Rb$ is a known cost function. If we
restrict the decision rules to a functional class $u(\Cdot)=\varPi(\Cdot,\beta)$, with parameters $\beta\in {\Bb}$, and average the cost over all contexts, we can formulate the following parametric policy optimization problem:
\[
\min_{\beta\in {\Bb}} \Eb\Big\{\Eb\big[ c(\varPi(X,\beta),X,Y)\big|X\big]\Big\} = \min_{\beta\in {\Bb}} \Eb\Big\{ c(\varPi(X,\beta),X,Y)\Big\}.
\]
It is a standard expected value optimization problem. A different situation occurs when we have a \textit{benchmark function} $b:\Rb^{n_x}\to \Rb$ and relate the conditional cost $\Eb[ c(u,X,Y)|X]$ to $b(X)$. For example, we may
use a convex increasing function $\ell:\Rb\to\Rb$ and formulate the problem
\[
\min_{\beta\in {\Bb}} \Eb\Big\{\ell\Big(\Eb\big[ c(\varPi(X,\beta),X,Y)\big|X\big]- b(X)\Big)\Big\}.
\]
It is a special case of problem \eqref{cso}.
}
\end{example}

The model \eqref{cso} is related to the composition optimization problem
\begin{equation}
\min_{\beta \in {\Bb}}  \Eb \Big\{g\big(X,\Eb [f(Y,\beta)]\big)\Big\},
\label{comp}
\end{equation}
which uses the complete expected value as an argument of the function $g(\Cdot)$. Problems
of this type were considered already in
 \cite{ermol1971general} and \cite[Ch. V.4]{ermoliev1976methods}, and have been recently revisited due to
 their relevance in machine learning
\cite{wang2017stochastic,WaLiFa17}.
The first single time-scale method for problem \eqref{comp} with continuously differentiable functions has been proposed in \cite{ghadimi2018single}, and further generalized to nonsmooth functions and multiple levels in \cite{ruszczynski2021stochastic}. Rates for smooth problems were provided in \cite{balasubramanian2022stochastic}.

The difference between \eqref{cso} and \eqref{comp} is that
the latter allows for treating $\Eb [f(Y,\beta)]$ as a function of $\beta$. Thanks to the availability of its value and (sub)gradient estimates, one can devise ``tracking'' algorithms (filters), that
estimate its values at the sequence $\{\beta^k\}$ generated by the method. In fact, the existing methods for \eqref{comp} exploit this idea, at different levels of sophistication.
In \eqref{cso}, the inner conditional expectation is a function of $X$ and $\beta$; therefore, ``tracking'' in a functional space is needed. It is extremely difficult to construct a statistical estimate of the objective value in \eqref{cso} or its (sub)gradient,
if only sampling from the joint distribution of $(X,Y)$ is possible.

Ref. \cite{hu2020sample} analyses a plug-in approximation method for solving \eqref{cso}.  Assuming the availability of samples from the distribution of $X$ and the conditional
distribution of $Y$, given $X$, the authors establish the sample complexity of the sample
average approximation procedure.  In \cite{hu2020biased} a biased stochastic gradient descent method is proposed for \eqref{cso}, employing a simulator that allows for generating
samples from the conditional expectation of increasing sizes. This allows for constructing better and better stochastic gradient estimates of the composition.
{  Ref. \cite{hu2021bias} reduced the expected number of samples from the conditional distribution
needed to build a low-bias gradient estimator from polynomial to $O(1)$.}
{  Ref. \cite{he2024debiasing} proposed to conduct bias correction via extrapolation. Conditions required for constructing unbiased gradient estimators for CSO problems were introduced in \cite{goda2023constructing}. The application of these techniques is restricted to smooth functions. }

Refs. \cite{dai2017learning,singh2019kernel} consider a related problem of learning the conditional expectation in a functional space. They assume a convex function $g(\Cdot)$, employ a Reproducing Kernel Hilbert Space to represent the conditional expectation function, {  exploit Fenchel duality towards a min-max reformulation, and propose the use of a saddle-point-seeking method. In this context, they do not require a two-level (nested) sampling, but only samples from the joint distribution. This approach is restricted to settings in which a convex-concave problem arises after reparametrization.}

{
It is also worth mentioning that although in some cases $X$ is a discrete random variable, for large sample space, { it is very difficult to have enough repeated observations $(\tilde{X}_i, \tilde{Y}_i)$ for each specific realization $\tilde{X}_i=x$,} due to limited datasets. This results in high bias and variance in the empirical measure.
}

Our main goal is to solve the CSO problem \eqref{cso} by a stochastic algorithm using one observation of the $(X,Y)$-pair per iteration.
Examples \ref{e:RL}--\ref{e:context} illustrate the need for such an approach. To achieve that, we use an auxiliary parametric functional model
of the conditional expectation:
\begin{equation}
    \label{def:F}
F(X, \beta) \triangleq \Eb[f(X, Y, \beta) | X].
\end{equation}
We assume that a sufficiently rich class of functions $\varPsi: \Rb^{n_x} \times \Rb^{n_\theta} \to \Rb^{n_f}$ has been established,
such that for every
$\beta\in {\Bb}$ an (unknown) value of the parameters $\bar{\theta}(\beta) \in \Rb^{n_\theta}$ exists for which
\begin{equation}
\label{tracking-exact}
F(X,\beta) = \varPsi\big(X,\bar{\theta}(\beta)\big)  \quad \text{a.s.}
\end{equation}
Assumptions of this nature are frequently used in the theoretical analysis of machine learning methods,   such as \cite{Bradtke1996,feng2020high,hao2020adaptive,Tsitsiklis1997,zhou2020neural},   to facilitate
\emph{generalization}.
We do not assume that we know the function $\bar{\theta}(\beta)$,
and we do not attempt to calculate its value for any $\beta$;
we merely assume that it exists. This will help to design a ``functional space tracking'' procedure in an algorithm for
solving \eqref{cso}.

One class of models $\varPsi(\Cdot,\Cdot)$ are \emph{linear architecture models},
where we identify \emph{features} $\psi_j(x)$, $j=1,\dots,n_\theta$, and use a regression model
\begin{equation}
    \label{regression}
\varPsi(x,\theta) = \sum_{j=1}^{n_\theta} \theta_j \psi_j(x).
\end{equation}
A more complicated model is a neural network,
in which the features are transformed by a composition of highly nonlinear and possibly nondifferentiable operators
to produce the model $\varPsi(x,\theta)$; the vector $\theta$ represents then various parameters (gains) of the network
\cite{cai2019neural}.

Our approach to \eqref{cso} is based on using the model $\varPsi(X,\theta)$
whenever an estimate of the ``invisible'' conditional expectation $F(X,\beta)$ is needed. As the proper value of $\theta$ is not known, we employ
 the {mean squared error} of the auxiliary model \eqref{tracking-exact}:
\begin{equation}
Q(\beta, \theta) \triangleq \frac{1}{2}
\Eb\big[\|F(X, \beta) - \varPsi(X, \theta)\|^2\big],
\label{te:Q}
\end{equation}
to guide the updates of $\theta$, when $\beta$ is changed.

{  However, we do not plan
to use a brute force two-level approach of the form: keep $\beta$ constant and learn the proper value of $\theta$, then make a step in $\beta$, learn the new $\theta$, and iterate in this way, until convergence. This is the main premise of the \textit{bilevel} optimization settings of the form:
\begin{equation}
\label{bilevel}
\begin{aligned}
\min_{\beta \in \Bb} \quad & \Eb\Big[g\big(\varPsi\big(X, \bar{\theta}(\beta)\big)\big)\Big] \\
\textrm{s.t.} \quad & \bar{\theta}(\beta) = \argmin_{\theta} Q(\beta, \theta);
\end{aligned}
\end{equation}
see, e.g., \cite{chen2021closing}.
We also do not want to use a two-timescale approach, in which the
updates of $\beta$ become infinitely slower than those of $\theta$.
We plan to design a single-timescale method that simultaneously updates $\beta$ and $\theta$
after each observation with stepsizes of the same order, and achieves the tracking accuracy in the limit only.
{ This is in contrast to \cite{schulman2015high}, where multiple episodes are observed between each update, in a heuristic  reinforcement learning method with
parametric value function approximation.}

In the analysis of  the method, we use yet another function:
\begin{equation}
\label{Bregman-stoch}
\Delta^{0}(\beta,\theta) \triangleq \Eb\big[g(F) - g(\varPsi) - \langle \nabla g(\varPsi), F - \varPsi \rangle \big],
\end{equation}
which is a  generalization of the \textit{Bregman distance} \cite{bregman1967relaxation} widely used
in deterministic nonsmooth convex optimization (see \cite{bauschke2001joint} and the references therein). The function $g(\Cdot)$ is assumed to be continuously differentiable but may be non-convex, and thus we use
the distance \eqref{Bregman-stoch} in conjunction with \eqref{te:Q}, to demonstrate
convergence to a critical point of \eqref{cso}.

We assume that the functions $f(x,y,\Cdot)$
and $\varPsi(x,\Cdot)$ are differentiable in a generalized (Norkin) sense, as defined in \cite{norkin1980generalized}. This class of functions is a subset of
locally Lipschitz continuous functions and includes all semismooth functions; in particular, all
subdifferentially regular functions, all weakly convex or weakly concave functions, and their compositions. In Appendix A, we review
the fundamental properties of such functions. One essential feature of this theory is that the application
of standard calculus rules, used by automatic differentiation software, produces valid generalized subgradients and generalized stochastic
subgradients.

The article is organized as follows. In \S \ref{s:2}, we present the relevant assumptions and derive some fundamental properties of the problem.
In \S \ref{s:3} we introduce a single time-scale stochastic subgradient method for solving problem \eqref{cso}, which uses only samples from the joint distribution.
\S \ref{s:4} provides the convergence analysis of the method. Numerical illustration is presented in \S \ref{s:5}.
In the Appendix, we summarize the most important properties of generalized differentiable functions.

Below we review some notation and conventions used throughout the paper. For simplicity, we ignore all the arguments of the functions,
if they are obvious from the context. We also indicate generalized derivatives in the subscript of a function symbol,
that is, $F_x(\Cdot) \in \partial F(\Cdot)$.  We only use the $l_2$-norm in this paper, which is denoted as $\|\Cdot\|$. A dot
over a symbol, for example, $\dt{F}$, represents the generalized derivative of $F(\beta(t))$ with respect to the
time variable, where $\beta(t)$ is an absolutely continuous path (usually, evident from the context).
A function $g: \Rb^n \to \Rb^m$ is \emph{$L$-Lipschitz continuous} if $L > 0$ exists  such that $\|g(x) - g(y)\| \le L \|x-y\|$ for all $x, y \in \Rb^n$. A differentiable function $g: \Rb^n \to \Rb^m$ is called \emph{$L$-Lipschitz smooth} if the vector function $\nabla g(\Cdot)$ is \emph{$L$-Lipschitz continuous}. This implies that $\|g(x) - g(y) - \nabla g(y)^T (x - y)\| \le \frac{L}{2} \|x-y\|^2$ for all $x, y \in \Xc$. A differentiable function $g: \Xc \to \Rb$ is
\emph{strongly convex with modulus $L>0$} if $g(x) - g(y) - \nabla g(y)^T (x - y) \ge \frac{L}{2} \|x-y\|^2$ for all $x, y \in \Xc$. It is \emph{weakly convex with modulus $L$} if
$g(x) - g(y) - \nabla g(y)^T (x - y) \ge -\frac{L}{2} \|x-y\|^2$ for all $x, y \in \Xc$.
An $L$-Lipschitz smooth function is $L$-weakly convex.

\section{Assumptions and Basic Properties}
\label{s:2}

We make the following boundedness, differentiability, and integrability assumptions.

\begin{description}
\item[(A1)] The set $\Bb$ is convex and compact.
\item[(A2)] The function $g(\Cdot)$ is continuously differentiable, its gradient is differentiable in the generalized sense, and constants $L_g$ and ${L_{\nabla g}}$ exist,
such that $\| \nabla g(u)\| \le L_g$ and $\| \nabla g(u) - \nabla g(v)\| \le {L_{\nabla g}}\|u -v\|$ for all $u,v \in \Rb^{n_f}$.
\item[(A3)] For every $x \in \Rb^{n_x}$ and every $y\in \Rb^{n_y}$ the function $f(x,y,\Cdot)$ is differentiable in the generalized sense
with the generalized subdifferential $\hat{\partial} f(x,y,\Cdot)$. Furthermore,  a function ${L_f}(x,y)$ and  constants $\widebar{L}_f>0$ and $C_f>0$  exist, such that
for all $\beta\in \Bb$ and all $(x,y)$
\begin{gather*}
 \sup_{f_\beta \in \hat{\partial} f(x,y,\beta)}\|f_\beta\| \le {L_f}(x,y),\\
\Eb [ {L_f}(X,Y)^4 ] \le \widebar{L}_f^4,\\
\intertext{and}
\Eb\big[\|f(X,Y,\beta)\|^4 \big] \le C_f^4.
\end{gather*}

\item[(A4)] For every $x \in \Rb^{n_x}$ the function $\varPsi(x,\Cdot)$ is differentiable in the generalized sense
with the generalized subdifferential $\hat{\partial} \varPsi(x,\Cdot)$. Furthermore, a function ${L_\psi}(x)$ and  constants
$\widebar{L}_\varPsi>0$ and  $C_{\varPsi}>0$ exist, such that for all $x$
\begin{gather*}
\sup_{\theta \in \Rb^{n_\theta}}\  \sup_{\varPsi_\theta \in \hat{\partial} \varPsi(x,\theta)}\|\varPsi_\theta\| \le {L_\psi}(x),\\
\Eb [ {L_\psi}(X)^4 ] \le \widebar{L}_\varPsi^4,\\
\intertext{and for some $\theta^0$,}
\Eb [ \|\varPsi(X,\theta^0)\|^4 ] \le C_\varPsi^4.
\end{gather*}
\end{description}

Assumption (A3) allows us to establish a fundamental property of generalized subdifferentials of conditional
expectations. We formulate it under general conditions which are implied by (A3).
\begin{lemma}
\label{l:conditional_subgradient}
Suppose the function $f(X,Y,\Cdot)$ is differentiable in the generalized sense and its subgradients are bounded
by an integrable function in any bounded neighborhood of any point $\beta$. Then for almost all $X$ the function
$\beta \mapsto \Eb\big[ f(X,Y,\beta) \big| X\big]$ is differentiable in the generalized sense, and
for every measurable function $s:\Rb^{n_x}\times \Rb^{n_y} \to \Rb^{n_f}$ such that $s(X,Y)  \in \hat{\partial}_\beta f(X,Y,\beta)$ a.s., we have
\[
\Eb\big[s(X,Y) \big| X\big] \in \hat{\partial} \Eb\big[ f(X,Y,\beta) \big| X\big] \quad  a.s.
\]
\end{lemma}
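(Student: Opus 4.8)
The plan is to exhibit an explicit candidate for the generalized subdifferential of $F(X,\cdot):=\Eb\big[f(X,Y,\cdot)\,\big|\,X\big]$ and to verify Norkin's three conditions for it. Fix a point $\beta_0$ and a closed ball $V$ centered at $\beta_0$ on which, by hypothesis, $\sup_{f_\beta\in\hat{\partial}_\beta f(x,y,\beta)}\|f_\beta\|\le L_f(x,y)$ with $\Eb[L_f(X,Y)]<\infty$. The candidate is the conditional Aumann integral $\Gamma(X,\beta):=\Eb\big[\hat{\partial}_\beta f(X,Y,\beta)\,\big|\,X\big]$. Since $(x,y)\mapsto\hat{\partial}_\beta f(x,y,\beta)$ is a measurable, nonempty/convex/compact-valued multifunction (a property of generalized differentiable integrands recalled in Appendix~A) that is integrably bounded on $V$, standard results on conditional expectations of set-valued mappings show that $\Gamma(X,\beta)$ is a.s.\ nonempty, convex and compact, and that its measurable selections are exactly the maps $X\mapsto\Eb[s(X,Y)\,|\,X]$ with $s$ a measurable selection of $\hat{\partial}_\beta f(\cdot,\cdot,\beta)$. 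Thus the inclusion claimed in the lemma is precisely the statement that such a conditional expectation is a selection of $\Gamma(X,\beta)$, and it will follow once $\Gamma(X,\cdot)$ is shown to be a valid generalized subdifferential of $F(X,\cdot)$.

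Upper semicontinuity of $\beta\mapsto\Gamma(X,\beta)$ follows from the upper semicontinuity and convex-compactness of $\hat{\partial}_\beta f(x,y,\cdot)$ together with the integrable envelope, by a standard weak-compactness argument (Dunford--Pettis and Mazur's lemma) for conditional Aumann integrals. The substantive step is the first-order expansion. Introduce the deterministic remainder
$\bar o_f(x,y,\beta):=\sup_{f_\beta\in\hat{\partial}_\beta f(x,y,\beta)}\big\|f(x,y,\beta)-f(x,y,\beta_0)-f_\beta(\beta-\beta_0)\big\|$;
for fixed $(x,y)$ this supremum is attained ($\hat{\partial}_\beta f(x,y,\beta)$ being compact), so generalized differentiability of $f(x,y,\cdot)$ at $\beta_0$ forces $\bar o_f(x,y,\beta)/\|\beta-\beta_0\|\to0$ as $\beta\to\beta_0$ for a.a.\ $(x,y)$, while convexity of $V$, the mean-value inequality for locally Lipschitz functions, and the bound $\|f_\beta\|\le L_f$ give $\bar o_f(x,y,\beta)\le 2L_f(x,y)\,\|\beta-\beta_0\|$ on $V$. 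For any measurable selection $h(X)=\Eb[s(X,Y)\,|\,X]$ of $\Gamma(X,\beta)$, conditional Jensen's inequality gives
$\big\|F(X,\beta)-F(X,\beta_0)-h(X)(\beta-\beta_0)\big\|\le\Eb\big[\bar o_f(X,Y,\beta)\,\big|\,X\big]=:\bar o_F(X,\beta)$,
and this dominates the remainder uniformly over the selection $h$.

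It remains to show $\bar o_F(X,\beta)/\|\beta-\beta_0\|\to0$ a.s.\ as $\beta\to\beta_0$, with exceptional null set independent of how $\beta$ approaches $\beta_0$. For this, set $\phi_\delta(x,y):=\sup\{\bar o_f(x,y,\beta)/\|\beta-\beta_0\|:0<\|\beta-\beta_0\|\le\delta\}$, which is measurable by the measurable maximum theorem applied to the jointly measurable map $(x,y,\beta)\mapsto\bar o_f(x,y,\beta)$ (and the projection theorem). Then $\phi_\delta\downarrow0$ a.s.\ as $\delta\downarrow0$ and $0\le\phi_\delta\le 2L_f$, so the conditional dominated convergence theorem along $\delta=1/n$ gives $\Eb[\phi_{1/n}\,|\,X]\downarrow0$ a.s. Since $\bar o_F(X,\beta)/\|\beta-\beta_0\|\le\Eb[\phi_\delta\,|\,X]$ whenever $\|\beta-\beta_0\|\le\delta$, we obtain $\limsup_{\beta\to\beta_0}\bar o_F(X,\beta)/\|\beta-\beta_0\|=0$ a.s. This is the required expansion, so $\Gamma(X,\cdot)$ is a generalized subdifferential of $F(X,\cdot)$ at $\beta_0$; since $\beta_0$ was arbitrary and such balls cover $\Rb^{n_\beta}$, the map $\beta\mapsto\Eb[f(X,Y,\beta)\,|\,X]$ is generalized differentiable, and $\Eb[s(X,Y)\,|\,X]\in\hat{\partial}F(X,\beta)$ a.s., as claimed.

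The two points I expect to require most care are measure-theoretic: (i) the selection-lifting identity for the conditional Aumann integral $\Gamma$, together with the joint measurability of $\hat{\partial}_\beta f$, $\bar o_f$, and $\phi_\delta$ — these I would take from the generalized-differentiability framework of Appendix~A and classical set-valued-integration theory rather than reprove; and (ii) ensuring the null set outside which the expansion for $F$ holds does not depend on the approaching sequence $\beta_k\to\beta_0$ or the selections $h_k$, which is precisely what the deterministic monotone envelopes $\phi_\delta$ and the conditional dominated convergence theorem achieve. The remaining ingredients — convexity, compactness and upper semicontinuity of $\Gamma$, and the $O(\|\beta-\beta_0\|^2)$ bound on the remainder — are routine consequences of the integrability hypothesis and the elementary properties of generalized differentiable functions.
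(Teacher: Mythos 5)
Your argument is sound in its analytical core, but it takes a genuinely different --- and much longer --- route than the paper. The paper's proof is a short reduction: disintegrate $P$ into the marginal $P_X$ and a regular conditional distribution $T(\cdot\,|\,x)$, write $\Eb[f(X,Y,\beta)\,|\,X=x]=\int f(x,y,\beta)\,T(\D y|x)$, observe that the Lipschitz envelope is $T(\cdot\,|\,x)$-integrable for almost all $x$, and invoke Theorem~\ref{t:expected-generalized} (closedness of the generalized-differentiable class under integration) for each such $x$. What you do instead is re-prove Theorem~\ref{t:expected-generalized} in the conditional setting: the envelope $\phi_\delta$, the conditional dominated convergence step, and the Jensen bound on the remainder are precisely the ingredients of the proof of that theorem (cf.\ \cite[Lem.~21.7]{mikhalevich1987nonconvex}). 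This is legitimate and self-contained, but it forfeits the chief advantage of the reduction, which is null-set management.

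That is where the one genuine gap sits. Your construction produces, for each fixed expansion center $\beta_0$, a $P$-null set $N_{\beta_0}$ of values of $X$ outside of which the first-order expansion of $F(X,\cdot)$ at $\beta_0$ holds; the concluding sentence ``since $\beta_0$ was arbitrary and such balls cover $\Rb^{n_\beta}$'' then tacitly takes a union of these null sets over uncountably many centers, which need not be null. The lemma asserts generalized differentiability of $\beta\mapsto\Eb[f(X,Y,\beta)\,|\,X]$ at \emph{every} $\beta$ for $X$ outside a \emph{single} null set, so this quantifier order matters. (The second assertion, the inclusion at a fixed $\beta$, is unaffected.) The repair is exactly the paper's disintegration step: the only $x$-dependent hypotheses of Theorem~\ref{t:expected-generalized} are the integrability conditions $\int L_{K_n}(x,y)\,T(\D y|x)<\infty$ over a countable cover $\{K_n\}$ of $\Rb^{n_\beta}$ by bounded neighborhoods, which fail only on a single null set of $x$; outside it, the deterministic theorem applies with the measure $T(\cdot\,|\,x)$ and yields differentiability at all $\beta$ simultaneously. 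If you wish to keep the conditional-expectation formalism throughout, you must at some point realize $\Eb[\,\cdot\,|\,X]$ as integration against such a kernel, or you cannot decouple the exceptional set from $\beta_0$.
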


\begin{proof}
The probability measure $P$ on $\Rb^{n_x}\times \Rb^{n_y}$
can be disintegrated into a marginal measure $P_X$ on $\Rb^{n_x}$ and a kernel $T:\Rb^{n_x} \to \Pc(\Rb^{n_y})$,
 where $\Pc(\Rb^{n_y})$ is the space of probability measures
 on $\Rb^{n_y}$. Then
\[
\Eb\big[ f(X,Y,\beta) \big| X=x\big] = \int f(x,y,\beta)\;T(\D y|x).
\]
For every bounded neighborhood $K$ of $\beta$, the Lipschitz constant $L_K(x,y)$ of the function $f(x,y,\Cdot)$
is integrable, and thus, for almost
all $x$, its conditional expectation
$\int L_K(x,y)\,T(dy|x)$ exists as well.
The assertion then follows from Theorem \ref{t:expected-generalized}.
\end{proof}

We can compactly write the assertion of the above lemma as\footnote{Here and elsewhere, the subdifferentiation is with respect to the parameters $\beta$ or $\theta$; we omit the corresponding subscripts of $\hat{\partial}$ to alleviate the notation.}
\begin{equation}
\label{dFbeta}
\hat{\partial}_{  \beta} F(X,\beta) = \Eb \big[ \hat{\partial}_{  \beta} f(X,Y,\beta)\,\big|\, X \big] \quad \text{a.s.,}
\end{equation}
where the conditional expectation on the right-hand side is understood as the collection of conditional expected values of all measurable selections of the multifunction
$(X,Y) \mapsto \hat{\partial}_{  \beta} f(X,Y,\beta)$.

Furthermore, by Theorem \ref{t:composition} and Assumption (A2), the composition
$g( F(x,\Cdot))$ is differentiable in the generalized sense as well, and
\[
\hat{\partial}_{  \beta} [g\circ F](x,\beta)  = \big\{    F_\beta(x,\beta) \nabla g(F(x,\beta)) : F_\beta(x,\beta) \in \hat{\partial}_{  \beta} F (x,\beta)\big\}
\]
is its generalized subdifferential (we adopt the convention that the generalized Jacobian $F_\beta$ is an $n_\beta \times n_f$ matrix).
Thanks to the integrability condition in (A3) and Theorem \ref{t:expected-generalized}, the function $G(\Cdot)$ in \eqref{cso}
is differentiable in the generalized sense as well, and a realization of its generalized subdifferential is the multifunction
\begin{equation}
    \label{G-sub}
\begin{aligned}
\hat{\partial}_{  \beta} G(\beta) &= \Eb \big\{ \hat{\partial}_{  \beta} [g\circ F](X,\beta) \big\}
=\Eb \big\{
\hat{\partial}_{  \beta} F (X,\beta) \nabla g(F(X,\beta))
\big\}
\\ &= \Eb \big\{
\hat{\partial}f (X,Y,\beta) \nabla g(F(X,\beta))
\big\}.
\end{aligned}
\end{equation}
The last equation follows by conditioning.

\begin{comment}
 With a view at (A4), the composition
$g( \varPsi(x,\Cdot))$ is also differentiable in the generalized sense, and
\[
\hat{\partial} [g\circ \varPsi](x,\theta)  = \big\{    \varPsi_\theta(x,\theta) \nabla g(\varPsi(x,\theta))
: \varPsi_\theta(x,\theta) \in \hat{\partial} \varPsi (x,\theta)\big\}
\]
is its generalized subdifferential. Finally, Theorem \ref{t:expected-generalized} with
the integrability condition in (A4)  imply that the function \eqref{tildeG}
is differentiable in the generalized sense as well, and the multifunction
\[
\hat{\partial}\widetilde{G}(\theta) = \Eb \big\{ \hat{\partial}[g\circ \varPsi](X,\theta) \big\} =
\Eb \big\{
\hat{\partial}\varPsi (X,\theta) \nabla g(\varPsi(X,\theta))
\big\}
\]
is its generalized subdifferential.
\end{comment}

Due to Assumptions (A3) and (A4), by virtue of Theorem \ref{t:composition},
the error function \eqref{te:Q} is differentiable in the generalized sense as well, and
the multifunction
\begin{equation}
\label{dQ}
\hat{\partial}_{(\beta,\theta)} Q(\beta,\theta) =
\Eb\left\{ \begin{bmatrix}
\hat{\partial}_{  \beta} f(X,Y,\beta) \\
-\hat{\partial}_{  \theta} \varPsi(X,\theta)
\end{bmatrix}
 \big(   F(X, \beta) - \varPsi(X, \theta)\big) \right\}
\end{equation}
is its generalized subdifferential.
%Evidently, we cannot observe unbiased estimates of the subgradients of $Q(\Cdot,\Cdot)$, because we do not see $F(X,\beta)$.
Since the set \eqref{dQ} is a Cartesian product, its $\theta$-part,
\[
\hat{\partial}_{\theta} Q(\beta,\theta) =
- \Eb\left\{
\hat{\partial}_{  \theta} \varPsi(X,\theta)
 \big(   F(X, \beta) - \varPsi(X, \theta)\big) \right\},
\]
is a subdifferential of $Q(\beta,\theta)$ with respect to $\theta$.

As mentioned before, for every $\beta \in {\Bb}$, a parameter value $\bar{\theta}(\beta)\in \Rb^{n_\theta}$ exists,
such that $Q\big(\beta,\bar{\theta}(\beta)\big)=0$. In addition to that, we assume the following
growth condition.
\begin{description}
\item[(A5)] A constant $M>0$ exists such that for all $\beta\in {\Bb}$ and all $\theta \in \Rb^{n_\theta}$
\[
 Q(\beta,\theta)  \le M \big[\dist\big(0,\hat{\partial}_{\theta} Q(\beta,\theta)\big)\big]^2.
\]
\end{description}
This assumption may be viewed as a form of a uniform \textit{{\L}ojasiewicz condition} \cite{lojasiewicz1963propriete} for the function $Q(\beta,\Cdot)$, for all $\beta \in {\Bb}$
(for a thorough exposition, see \cite{bolte2010characterizations} and the references therein).

\begin{remark}
\label{r:growth_lin}
{\rm
If $\varPsi(\Cdot,\Cdot)$ is defined as the linear regression model \eqref{regression} and the model is sufficiently rich, then Assumption (A5) is satisfied, provided that the feature vectors are linearly independent.   The stochastic generalized subdifferential $\hat{\partial}_\theta \varPsi(X,\theta) = [\psi_1(X),\dots,\break\psi_{d_\theta}(X)]^T$ is a singleton. We denote it as ${\psi}(X)$.
We have $F(X,\beta) = {\psi}(X)^T \bar{\theta}$ for some $\bar{\theta}$ (which is assumed to exist), and $\varPsi(X,\theta) = {\psi}(X)^T {\theta}$. Therefore
\[
\hat{\partial}_{\theta} Q(\beta,\theta) = -\Eb \big[ {\psi}(X) \big( F(X,\beta) -  \varPsi(X,\theta) \big)   \big] = -\Eb\big[{\psi}(X) {\psi}(X)^T\big] (\bar{\theta} - \theta) .
\]
If multicollinearity does not occur in the feature matrix, then $\Eb\big[{\psi}(X) {\psi}(X)^T\big]$ is nonsingular and positive definite, and we can diagonalize it as $\Eb\big[{\psi}(X) {\psi}(X)^T\big] = U^T D U$. Here, $U$ is orthogonal and $D = \text{diag}(\lambda_1,\dots,\lambda_{n_\theta})$, where $\{\lambda_i\}_{1\le i \le n_\theta}$ are positive eigenvalues. Then,
\begin{align*}
Q(\beta,\theta) &= \frac{1}{2}\Eb\big[\big\|{\psi}(X)^T(\bar{\theta} - \theta)\big\|^2\big]
= \frac{1}{2} (\bar{\theta} - \theta)^T \Eb\big[{\psi}(X) {\psi}(X)^T\big] (\bar{\theta} - \theta) \\
&= \frac{1}{2} (\bar{\theta} - \theta)^T U^T D U (\bar{\theta} - \theta) \\
&\le \frac{1}{2 \cdot \min_{1\le i \le n_\theta} \lambda_i} (\bar{\theta} - \theta)^T U^T D U \cdot U^T D U (\bar{\theta} - \theta) \\
&= \frac{1}{2 \cdot \min_{1\le i \le n_\theta} \lambda_i} \big\|\hat{\partial}_{\theta} Q(\beta,\theta)\big)\big\|^2.
\end{align*}
Therefore, assumption (A5) is satisfied with $M = 1/\big(2 \cdot \min_{1\le i \le n_\theta} \lambda_i\big) > 0$.
\hfill
}
\end{remark}

{
\begin{remark}
\label{r:growth_nonlin}
{\rm
{ For a generalized differentiable $\varPsi(X,\Cdot)$, the  subgradients $Q_\theta(\beta, \theta) \in \hat{\partial}_{\theta} Q(\beta,\theta)$ can be derived with the use of Theorem \ref{t:chain}}
as follows:
\begin{equation*}
\begin{aligned}
Q_\theta(\beta, \theta) &= -\Eb \big[ \varPsi_\theta(X, \theta) \big( F(X,\beta) -  \varPsi(X,\theta) \big) \big]
= -\Eb \big[ \varPsi_\theta(X, \theta) \big( \varPsi(X,\bar{\theta}) -  \varPsi(X,\theta) \big) \big] \\
&= -\Eb \big[ \varPsi_\theta(X, \theta) \int_0^1 \varPsi_\theta(X, \tilde{\theta}_\tau)^T \big( \bar{\theta} - \theta \big) \;d\tau \big] \\
&= -\int_0^1 \Eb[\varPsi_\theta(X, \theta) \varPsi_\theta(X, \tilde{\theta}_\tau)^T]\; d\tau \cdot (\bar{\theta} - \theta),
\end{aligned}
\end{equation*}
where $\varPsi_\theta(X, \theta) \in \hat{\partial}_\theta \varPsi(X, \theta)$ and $\tilde{\theta}_\tau = \bar{\theta} + \tau (\theta - \bar{\theta})$.

Similar to the linear case, if $\Eb[\varPsi_\theta(X, \theta') \varPsi_\theta(X, \theta'')^T] \succeq \lambda_{\min} \cdot I$ for all $\theta', \theta'' \in  \Rb^{n_\theta}$, then

{
\[
\langle Q_\theta(\beta, \theta),\theta - \bar{\theta}\rangle \ge  \lambda_{\min} \| \theta - \bar{\theta} \|^2.
\]
Thus,
\[
\|Q_\theta(\beta, \theta)\|  = \max_{\|z\|=1} \, \langle Q_\theta(\beta, \theta), z \rangle \ge \lambda_{\min}\| \theta - \bar{\theta} \|.
\]
From Assumption (A4) we obtain
\[
 Q(\beta,\theta) =  \frac{1}{2}\Eb  \big[ \| \varPsi(X,\bar{\theta}) - \varPsi(X,\theta)\|^2 \big] \\
\le  \frac{1}{2} \widebar{L}_\varPsi^2 \| \bar{\theta} - \theta\|^2.
\]
The last two  inequalities imply that
Assumption (A5) is satisfied with $M = \widebar{L}_\varPsi^2/\big(2 \cdot \lambda_{\min}\big) > 0$.
}
}
\hfill
\end{remark}
}

Finally, we need an assumption about the behavior of the tracking error for very large $\theta$.

\begin{description}
\item[(A6)] A constant $\underline{R}>0$ exists such that if $\beta\in {\Bb}$ and  $\|\theta\|\ge \underline{R}$, then for all $Q_\theta \in\hat{\partial}_{\theta} Q(\beta,\theta)  $,
\[
\langle \theta, Q_\theta \rangle >0.
\]
\end{description}
{  This assumption is purely technical; its only purpose is to help keep
the set of points $\theta$ generated by our method bounded, without introducing
false stationary points on the boundary of the ball $\{\theta:  \|\theta\| \le \underline {R}\}$.}
\begin{remark}
\label{r:infty_lin}
{\rm
If $\varPsi(\Cdot,\Cdot)$ is defined as a linear regression model \eqref{regression} and the model is sufficiently rich, then Assumption (A6)
is satisfied, provided that the feature vectors are linearly independent.  As in Remark \ref{r:growth_lin},
\[
\hat{\partial}_{\theta} Q(\beta,\theta) = -\Eb \big[ {\psi}(X) \big( F(X,\beta) - {\psi}(X)^T {\theta}\big)\big].
\]
Therefore, using the Cauchy--Schwartz inequality, we obtain
\begin{align*}
\langle \theta, Q_\theta \rangle  &= \theta^T \Eb\big[{\psi}(X) {\psi}(X)^T\big] \theta
- \theta^T \Eb \big[ {\psi}(X)  F(X,\beta)\big]\\
&\ge \theta^T \Eb\big[{\psi}(X) {\psi}(X)^T\big] \theta
- \|\theta\| \Big( \Eb \big[{\psi}(X)^T {\psi}(X)\big]
\Eb\big[\|F(X,\beta)\|^2 \big]\Big)^{1/2}\\
& \ge \|\theta\|^2 \min_{1\le i \le n_\theta} (\lambda_i)  - \|\theta\| \Big( \max_{1\le i \le n_\theta} (\lambda_i)
\, \max_{\beta\in {\Bb}} \Eb\big[\|F(X,\beta)\|^2 \big]\Big)^{1/2}.
\end{align*}
Due to Assumption (A3), a constant $C$ exists, such that the term after the minus sign above is bounded by $C\|\theta\|$, which implies that $\langle \theta, Q_\theta \rangle \ge 0$ for all sufficiently
large $\|\theta\|$.
\hfill
}
\end{remark}

{
\begin{remark}
\label{r:infty_nonlin}
{\rm
For a generalized differentiable $\varPsi(X,\Cdot)$,
applying the same technique as in Remark \ref{r:growth_nonlin}, we obtain
\begin{equation*}
\begin{aligned}
Q_\theta(\beta, \theta) &= -\Eb \big[ \varPsi_\theta(X, \theta) \big( F(X,\beta) - \varPsi(X,0) -\varPsi(X,\theta)+\varPsi(X,0) \big) \big]\\
&= -\Eb \big[ \varPsi_\theta(X, \theta) \big( F(X,\beta) - \varPsi(X,0)\big]
+\int_0^1 \Eb[\varPsi_\theta(X, \theta) \varPsi_\theta(X, \tau\theta)^T]\; d\tau \cdot \theta.
\end{aligned}
\end{equation*}
If for all $\theta', \theta'' \in \Rb^{n_\theta}$, we have $\Eb[\varPsi_\theta(X, \theta') \varPsi_\theta(X, \theta'')^T] \succeq \lambda_{\min}   I$, then
\[
\langle Q_\theta(\beta, \theta),\theta \rangle \ge \lambda_{\min} \|\theta\|^2
-\widebar{L}_\varPsi  \max_{\beta\in \Bb}\Big(\Eb\big[\|F(X,\beta) - \varPsi(X,0)\|^2\big]\Big)^{1/2} \|\theta\|\ge 0,
\]
for all sufficiently large $\|\theta\|$.}
\hfill
\end{remark}
}

\section{The method}
\label{s:3}
The necessary optimality condition  for problem \eqref{cso} has the form
\begin{equation}
\label{optimality}
0 \in \hat{\partial}_{ \beta} G(\beta) + \Nc_{\Bb}(\beta),
\end{equation}
where $\hat{\partial}_{ \beta} G(\Cdot)$ is the generalized subdifferential \eqref{G-sub} and $\Nc_{\Bb}(\Cdot)$ is the normal cone
to the set $\Bb$. We define the sets
\begin{align*}
    {\Bb}^* &= \big\{ \beta\in {\Bb} : \ \text{\eqref{optimality} is satisfied} \big\},\\
    \Zb^* &= \big\{ (\beta,\theta)\in {\Bb}^* \times \Rb^{n_\theta}:  Q(\beta,\theta) = 0\big\}.
\end{align*}
We propose a stochastic subgradient method for finding an element of $\Zb^*$.

%Thanks to the availability of the functional form of $\varPsi(\Cdot,\Cdot)$
%we can develop an iterative method for solving \eqref{cso}.

To simplify the development and analysis
of the method, we assume that $R \ge \underline{R}$ is known,
so that $\Zb^* \subset \Bb \times \Theta_R$,
where $\Theta_R = \{ \theta\in \Rb^{n_\theta}: \|\theta \| \le R\}$.
Due to Assumption (A6), any $R\ge \underline{R}$ has this property.

At each iteration $k=0,1,\dots$ of the method,
given the current approximate solution $(\beta^k,\theta^k)\in {\Bb} \times \Theta_R$,
we use an observation $(X^k,Y^k)$ from the joint distribution of $(X,Y)$, to construct random directions
$\tilde{d}_\beta^k$ and $\tilde{d}_\theta^k$, and to update the current point:
\begin{equation}
\label{method}
\begin{aligned}
\beta^{k+1} &= \varPi_{\Bb}(\beta^k + \tau_k \tilde{d}_\beta^k),\\
\theta^{k+1} &= \varPi_{\Theta_R}(\theta^k + \tau_k \tilde{d}_\theta^k).
\end{aligned}
\end{equation}
Here, $\tau_k>0$ is a stepsize of the method, $\varPi_{\Bb}(\Cdot)$  denotes the orthogonal projection on $\Bb$,
and $\varPi_{\Theta_R}(\Cdot)$ is the orthogonal projection on the ball $\Theta_R$.

To construct the directions $\tilde{d}_\beta^k$ and $\tilde{d}_\theta^k$,
we use the values and  generalized gradients of the functions $f(X^k,Y^k,\Cdot)$ and $\varPsi(X^k,\Cdot)$, which
can be observed at the current point $(\beta^k,\theta^k)$. We also use the derivatives of the function $g(\Cdot)$ at the
points $\varPsi(X^k,\theta^k)$, which can be observed as well.

In our presentation of the method and its analysis, we denote by $(\varOmega,\Fc,P)$ the algorithmic probability space on which
the random sequences generated by the method are defined, and by
$\Fc_k$ the $\sigma$-subalgebra defined by the history $(\beta^0,\theta^0,X^0,Y^0,\dots,X^{k-1},Y^{k-1},\beta^k,\theta^k)$.
We assume that at iteration $k=0,1,\dots$ we observe the pair $(X^k,Y^k)$, independently of $\Fc_k$, and we calculate the following quantitites:
\begin{gather*}
f^k  = f(X^k,Y^k,\beta^k),\\
f_\beta^k  \in \hat{\partial}_{ \beta} f(X^k,Y^k,\beta^k),\\
\varPsi^k  = \varPsi(X^k,\theta^k),\\
\varPsi_\theta^k  \in \hat{\partial}_{ \theta} \varPsi(X^k,\theta^k),\\
\nabla g^k = \nabla g(\varPsi^k).
%\\
%{  \nabla^2 g^k \in \hat{\partial} \nabla g(\varPsi^k).}
\end{gather*}
Then we calculate the directions in \eqref{method}:
\begin{subequations}
\label{dzk}
\begin{align}
    \tilde{d}_\beta^k &= - f_\beta^k \,\nabla g^k,  \label{dbetak}\\
    \tilde{d}_\theta^k &=\gamma \,\varPsi_\theta^k (f^k - \varPsi^k). \label{dthetak}
\end{align}
\end{subequations}
Here, $\gamma>0$ is a parameter of the method.
In the next section, we shall demonstrate that
%$\tilde{d}_\beta^k$
%is a negative stochastic subgradient with respect to $\beta$ of the approximate %function \eqref{tildeG}, while
$\tilde{d}_\theta^k$ is a negative stochastic subgradient with respect to $\theta$ of the error function \eqref{te:Q} at $(\beta^k,\theta^k)$. However, the
direction $\tilde{d}_\beta^k$ is not a negative stochastic subgradient of any function, because we use the observed gradient $\nabla g(\varPsi^k)$ instead of
the inaccessible $\nabla g\big(F(X^k,\beta^k)\big)$. Thus, the
pair $(\tilde{d}_\beta^k,\tilde{d}_\theta^k)$ is \textit{not
a negative stochastic subgradient of any function}.
{  The method \eqref{method}--\eqref{dzk} defines a complex dynamics in the
space $\Rb^{n_\beta}\times \Rb^{n_\theta}$; we will analyze it with the use of a specially
designed Lyapunov function.}

We make the following standard assumptions about the stepsizes in the method \eqref{method}.

\begin{description}
\item[(A7)]
The stepsizes $\tau_k$ are $\Fc_k$-measurable, and such that\vspace{0.5ex}
\begin{tightlist}{iii}
\item  $\tau_k >0, \quad k=0,1,2,\dots$;
\item   $ \displaystyle{\sum_{k=0}^\infty} \tau_k = \infty \quad \text{a.s.}$;
\item $\Eb \displaystyle{\sum_{k=0}^\infty} \tau_k^2 < \infty$.
\end{tightlist}
\end{description}

\section{Convergence Analysis}
\label{s:4}
Consider the multifunction
$\varGamma: \Bb \times \Theta_R \rightrightarrows \Rb^{n_f}$ defined as follows:
\[
\varGamma(\beta,\theta) = \Eb
\left\{
\begin{bmatrix}
 -\hat{\partial}_\beta F(X,\beta) \nabla g\big(\varPsi(X,\theta)\big) \\
\gamma \,\hat{\partial}_\theta \varPsi(X,\theta)\big(F(X,\beta) -\varPsi(X,\theta)\big)
\end{bmatrix}
\right\}.
\]
We need to establish the properties of the multifunction $\varGamma(\Cdot,\Cdot)$ that are relevant for the convergence analysis.
\begin{lemma}
\label{l:Gamma}
The multifunction $\varGamma(\Cdot,\Cdot)$ is nonempty-, convex-, and compact-valued and upper-semicontinuous.
\end{lemma}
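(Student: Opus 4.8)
\medskip
\noindent\emph{Proof plan.} The plan is to regard $\varGamma$ as a parametrized Aumann integral and to exploit its Cartesian-product structure: for a fixed pair $(\beta,\theta)\in\Bb\times\Theta_R$ write $\varGamma(\beta,\theta)=\varGamma_\beta(\beta,\theta)\times\varGamma_\theta(\beta,\theta)$ with
\[
\varGamma_\beta(\beta,\theta)=\Eb\big[-\hat{\partial}_\beta F(X,\beta)\,\nabla g(\varPsi(X,\theta))\big],\qquad
\varGamma_\theta(\beta,\theta)=\Eb\big[\gamma\,\hat{\partial}_\theta\varPsi(X,\theta)\big(F(X,\beta)-\varPsi(X,\theta)\big)\big],
\]
and denote by $\Phi(x;\beta,\theta)\subset\Rb^{n_f}$ the corresponding integrand multifunction, so that $\varGamma(\beta,\theta)=\Eb[\Phi(X;\beta,\theta)]$. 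Throughout I would use three facts. (i) The generalized subdifferentials $\hat{\partial}_\beta f(x,y,\Cdot)$ and $\hat{\partial}_\theta\varPsi(x,\Cdot)$ --- and, by Lemma~\ref{l:conditional_subgradient}, also $\hat{\partial}_\beta F(x,\Cdot)$ --- are measurable in the base variable, convex- and compact-valued, and upper-semicontinuous in the parameter (defining properties of generalized differentiable functions, recalled in Appendix~A). (ii) $\nabla g(\Cdot)$, $F(x,\Cdot)$ and $\varPsi(x,\Cdot)$ are locally Lipschitz, hence continuous. (iii) The fourth-moment bounds in (A2)--(A4), together with the compactness of $\Bb$ and $\Theta_R$, furnish a \emph{fixed} integrable function $h$ such that every measurable selection of $\Phi(X;\beta,\theta)$ is bounded in norm by $h(X)$, uniformly in $(\beta,\theta)\in\Bb\times\Theta_R$; indeed $\|\hat{\partial}_\beta F(X,\beta)\|\le\Eb[L_f(X,Y)\mid X]$, $\|\nabla g\|\le L_g$, $\|\hat{\partial}_\theta\varPsi(X,\theta)\|\le L_\psi(X)$, and $\|F(X,\beta)-\varPsi(X,\theta)\|$ is dominated by an $L^4$ function of $X$ (uniform in the parameters) because $f(X,Y,\Cdot)$ and $\varPsi(X,\Cdot)$ are Lipschitz on the compacta $\Bb$ and $\Theta_R$ with constants $L_f(X,Y)$, $L_\psi(X)$ and have $L^4$ values at reference points, so that a Cauchy--Schwarz estimate yields integrability of the products.

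First I would settle the pointwise properties, for fixed $(\beta,\theta)$. Measurable selections of $x\mapsto\hat{\partial}_\beta F(x,\beta)$ and $x\mapsto\hat{\partial}_\theta\varPsi(x,\theta)$ exist by the Kuratowski--Ryll-Nardzewski theorem, and by (iii) the corresponding products are integrable, so $\varGamma(\beta,\theta)\neq\emptyset$. Pointwise each integrand set is convex (a convex compact set times a fixed vector), and the Aumann integral of a convex-valued multifunction is convex, so $\varGamma_\beta(\beta,\theta)$ and $\varGamma_\theta(\beta,\theta)$ are convex. For compactness I would pass to $L^1$: the set $S$ of integrable selections of $\Phi(\cdot;\beta,\theta)$ is convex and $L^1$-closed (convex-, closed-valued integrand) and bounded by $h\in L^1$, hence uniformly integrable; by the Dunford--Pettis theorem $S$ is relatively weakly compact in $L^1$, and being convex and norm-closed it is weakly closed, so weakly compact. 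As $\varGamma(\beta,\theta)$ is the image of $S$ under the linear map $\xi\mapsto\Eb[\xi]$, which is weakly continuous into the finite-dimensional range, $\varGamma(\beta,\theta)$ is convex and compact.

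Next I would prove upper semicontinuity. Since $\varGamma$ is compact-valued and uniformly bounded on the compact set $\Bb\times\Theta_R$, it suffices to show that $\graph\varGamma$ is closed. Take $(\beta^j,\theta^j)\to(\beta,\theta)$ with $v^j\in\varGamma(\beta^j,\theta^j)$ and $v^j\to v$, and write $v^j=\Eb[\xi^j]$ for a measurable selection $\xi^j(X)\in\Phi(X;\beta^j,\theta^j)$. By (iii), $\|\xi^j\|\le h$, so $\{\xi^j\}$ is uniformly integrable and, along a subsequence, $\xi^{j_k}\rightharpoonup\xi$ weakly in $L^1$. By Mazur's lemma convex combinations of the $\xi^{j_k}$ converge to $\xi$ in $L^1$, hence a.s.\ along a further subsequence, which gives, for almost every $x$,
\[
\xi(x)\in\bigcap_{m\ge1}\cl\conv\bigcup_{k\ge m}\Phi\big(x;\beta^{j_k},\theta^{j_k}\big).
\]
The multifunction $(\beta,\theta)\mapsto\Phi(x;\beta,\theta)$ is upper-semicontinuous: its first block is the product of the upper-semicontinuous compact-valued map $\hat{\partial}_\beta F(x,\Cdot)$ with the continuous map $\nabla g(\varPsi(x,\Cdot))$, its second block is the product of the upper-semicontinuous map $\hat{\partial}_\theta\varPsi(x,\Cdot)$ with the continuous map $F(x,\Cdot)-\varPsi(x,\Cdot)$, and a Cartesian product of upper-semicontinuous multifunctions is upper-semicontinuous. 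Together with convexity and compactness of $\Phi(x;\beta,\theta)$, this gives $\cl\conv\limsup_k\Phi(x;\beta^{j_k},\theta^{j_k})\subseteq\Phi(x;\beta,\theta)$, so $\xi(x)\in\Phi(x;\beta,\theta)$ a.s.; thus $\xi$ is an integrable selection of $\Phi(\cdot;\beta,\theta)$ and $\Eb[\xi]\in\varGamma(\beta,\theta)$. Finally, testing $\xi^{j_k}\rightharpoonup\xi$ against the constant function shows $\Eb[\xi]=\lim_k\Eb[\xi^{j_k}]=v$, so $v\in\varGamma(\beta,\theta)$, and $\graph\varGamma$ is closed.

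I expect the upper-semicontinuity step to be the main obstacle --- more precisely, the passage of the limit inside the expectation when the integrand $\Phi(x;\Cdot,\Cdot)$ is only upper-semicontinuous (not continuous) in the parameters. It is made to work by combining the upper semicontinuity of the generalized subdifferentials (a structural property from Appendix~A) with the uniform integrable domination coming from the fourth-moment assumptions (A3)--(A4), which together justify the Dunford--Pettis plus Mazur argument. The nonemptiness, convexity and compactness claims are comparatively routine once those moment bounds are in place.
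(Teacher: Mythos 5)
Your proof is correct, and its core structure --- verify that the integrand multifunction is upper-semicontinuous, convex- and compact-valued, and dominated by a fixed integrable function, then conclude that the Aumann integral inherits these properties --- is the same as the paper's. The difference is in how the two halves of that structure are handled. The paper first rewrites both blocks, by conditioning, as expectations over the \emph{joint} distribution of $(X,Y)$ (replacing $\hat{\partial}_\beta F(X,\beta)$ by $\hat{\partial}_\beta f(X,Y,\beta)$ and $F(X,\beta)$ by $f(X,Y,\beta)$), so that the integrable bounds $L_f(X,Y)L_g$ and $\gamma L_\varPsi(X)(C_f+C_\varPsi+4L_\varPsi(X)R^2)$ drop out directly from (A2)--(A4); it then simply cites the closure-under-integration result (a step in the proof of Theorem~\ref{t:expected-generalized}, i.e.\ Lem.~21.7 of Mikhalevich--Gupal--Norkin). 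You instead keep the integrand as a multifunction of $X$ alone, which costs you a conditional Cauchy--Schwartz argument and an appeal to measurable selection for the conditional subdifferential, and you reconstruct the cited lemma in full via Dunford--Pettis, Mazur's lemma, and the $\cl\conv\limsup$ inclusion for upper-semicontinuous compact-valued integrands. What your route buys is self-containedness (the reader need not consult the Russian-language reference); what the paper's route buys is brevity and the fact that the domination bounds are immediate rather than requiring the uniform-in-$(\beta,\theta)$ Lipschitz estimates you sketch in step (iii). Both are valid; no gaps.
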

\begin{proof}
By virtue of Lemma \ref{l:conditional_subgradient}, the $\beta$-component of $\varGamma(\beta,\theta)$ equals
\begin{multline*}
 -\Eb \big[ \hat{\partial}_\beta F(X,\beta) \nabla g\big(\varPsi(X,\theta)\big)\big] =
 -\Eb\Big\{ \Eb\big[ \hat{\partial}_\beta f(X,Y,\beta)\big|X\big] \nabla g\big(\varPsi(X,\theta)\big)\Big\}\\
% - \hat{\partial}_\beta  \Eb \big[ F(X,\beta) \nabla g\big(\varPsi(X,\theta)\big)\big]\\
% =
% - \hat{\partial}_\beta  \Eb \big[ f(X,Y,\beta) \nabla g\big(\varPsi(X,\theta)\big)\big]
  =
 - \Eb \big[ \hat{\partial}_\beta  f(X,Y,\beta) \nabla g\big(\varPsi(X,\theta)\big)\big].
\end{multline*}
\begin{comment}
The interchange of the subdifferentiation and the expected value is justified because
Assumptions (A2) and (A3) imply that:
\begin{tightlist}{ii}
\item the function
$\beta \mapsto f(X,Y,\beta) \nabla g\big(\varPsi(X,\theta)\big)$ is
Norkin differentiable; and
\item its Lipschitz constant $L_f(X,Y) L_g$ is integrable.
\end{tightlist}
\vspace{0.5ex}
\end{comment}

The $\theta$-component of $\varGamma(\beta,\theta)$ can be written as follows:
\[
\gamma \,\Eb \big[ \hat{\partial}_\theta \varPsi(X,\theta)\big(F(X,\beta) -\varPsi(X,\theta)\big)\big]
= \gamma \,\Eb \big[ \hat{\partial}_\theta \varPsi(X,\theta)\big(f(X,Y,\beta) -\varPsi(X,\theta)\big)\big].
\]
Due to Assumptions (A2)--(A4), for all $(X,Y)$, the multifunction
\[
(\beta,\theta) \mapsto \widetilde{\varGamma}(\beta,\theta;X,Y) \triangleq
\begin{bmatrix}
 -\hat{\partial}_{ \beta} f(X,Y,\beta) \nabla g\big(\varPsi(X,\theta)\big) \\
\gamma \,\hat{\partial}_{ \theta} \varPsi(X,\theta)\big(f(X,Y,\beta) -\varPsi(X,\theta)\big)
\end{bmatrix}
\]
is upper-semicontinuous, and convex- and compact-valued. Furthermore, its $\beta$-values are
bounded by and integrable function $L_f(X,Y)L_g$, and its $\theta$-values are bounded
by an integrable function $  \gamma L_{\varPsi}(X)( C_f +  C_{\varPsi} + 4  L_{\varPsi}(X) R^2)$.
These two conditions imply that its integral -- the multifunction $\varGamma(\Cdot,\Cdot)$ -- is upper-semicontinuous, and convex- and compact-valued as well. This implication is part of
the proof of Theorem \ref{t:expected-generalized}; see \cite[Lem. 21.7]{mikhalevich1987nonconvex}.
\end{proof}

Define the average directions,
\begin{equation}
\label{def:avg_dir}
d^k = \Eb [\tilde{d}^k \,| \, \Fc_k],\quad k=0,1,\dots,
\end{equation}
and let
\begin{equation}
\notag
%\label{def:e}
e^k = \tilde{d^k} - d^k.
\end{equation}

Lemma \ref{l:conditional_subgradient} allows us to establish a key property of the conditional expectations of the directions used in the method.

\begin{lemma}
\label{l:dk-in-Gamma}
For all $k=0,1,2,\dots$ we have $d^k \in \varGamma(\beta^k,\theta^k)$.
\end{lemma}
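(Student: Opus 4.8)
The plan is to verify that each of the two components of $d^k = \Eb[\tilde{d}^k \mid \Fc_k]$ lies in the corresponding component of $\varGamma(\beta^k,\theta^k)$, exploiting the fact that $\varGamma$ and $\widetilde\varGamma$ are both Cartesian products (so membership can be checked coordinate by coordinate) and that, conditionally on $\Fc_k$, the point $(\beta^k,\theta^k)$ is fixed while $(X^k,Y^k)$ has the law of $(X,Y)$ independently of $\Fc_k$.

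\medskip

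First I would treat the $\theta$-component. By \eqref{dthetak}, $\tilde d_\theta^k = \gamma\,\varPsi_\theta^k(f^k-\varPsi^k)$ with $\varPsi_\theta^k \in \hat\partial_\theta \varPsi(X^k,\theta^k)$ chosen by a measurable selection rule, so that $(x,y)\mapsto \gamma\,\varPsi_\theta(x,\theta^k)\big(f(x,y,\beta^k)-\varPsi(x,\theta^k)\big)$ is a measurable selector of the multifunction $\widetilde\varGamma(\beta^k,\theta^k;\Cdot,\Cdot)$ restricted to its $\theta$-block. Taking the conditional expectation given $\Fc_k$ and using independence of $(X^k,Y^k)$ from $\Fc_k$, this equals $\gamma\,\Eb\big[\varPsi_\theta(X,\theta)\big(f(X,Y,\beta)-\varPsi(X,\theta)\big)\big]$ evaluated at $(\beta,\theta)=(\beta^k,\theta^k)$, which, by the identity displayed in the proof of Lemma~\ref{l:Gamma} (conditioning $f$ on $X$ turns $f$ into $F$), is precisely the $\theta$-component of $\varGamma(\beta^k,\theta^k)$ — it is the conditional expectation of a measurable selection of the relevant multifunction, which is exactly how the set-valued integral defining $\varGamma$ is understood.

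\medskip

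Next I would treat the $\beta$-component, which is the more delicate one because of the nested conditional expectation. By \eqref{dbetak}, $\tilde d_\beta^k = -f_\beta^k\,\nabla g^k = -f_\beta^k\,\nabla g(\varPsi(X^k,\theta^k))$ with $f_\beta^k\in\hat\partial_\beta f(X^k,Y^k,\beta^k)$. Taking $\Eb[\Cdot\mid\Fc_k]$ gives $-\Eb\big[f_\beta(X,Y,\beta)\,\nabla g(\varPsi(X,\theta))\big]$ at $(\beta^k,\theta^k)$. Here I would invoke Lemma~\ref{l:conditional_subgradient}: the map $(x,y)\mapsto f_\beta(x,y,\beta^k)$ is a measurable selector of $\hat\partial_\beta f(X,Y,\beta)$, so by that lemma $\Eb[f_\beta(X,Y,\beta)\mid X]\in\hat\partial_\beta F(X,\beta)$ a.s.; since $\nabla g(\varPsi(X,\theta))$ is $X$-measurable, the tower property yields
\[
-\Eb\big[f_\beta(X,Y,\beta)\,\nabla g(\varPsi(X,\theta))\big]
= -\Eb\Big[\Eb\big[f_\beta(X,Y,\beta)\,\big|\,X\big]\,\nabla g(\varPsi(X,\theta))\Big],
\]
and the inner conditional expectation is a measurable selector of $\hat\partial_\beta F(X,\beta)$. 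Hence the whole $\beta$-block is the conditional expectation of a measurable selection of $-\hat\partial_\beta F(X,\beta)\nabla g(\varPsi(X,\theta))$, i.e. an element of the $\beta$-component of $\varGamma(\beta^k,\theta^k)$.

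\medskip

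Combining the two blocks, $d^k\in\varGamma(\beta^k,\theta^k)$ for every $k$. I expect the main obstacle to be purely a matter of careful bookkeeping rather than a genuine difficulty: one must be precise that the algorithmic subgradient choices $f_\beta^k$, $\varPsi_\theta^k$ arise from fixed measurable selection rules (so that, as functions of the fresh sample $(X^k,Y^k)$ with $(\beta^k,\theta^k)$ frozen, they are measurable selectors of the integrand multifunctions), and that the set-valued expectation defining $\varGamma$ is exactly ``all conditional/ordinary expectations of measurable selections,'' so that exhibiting one such selection suffices. The integrability bounds needed for these expectations to be well-defined are already supplied by Assumptions (A2)--(A4) and were used in Lemma~\ref{l:Gamma}.
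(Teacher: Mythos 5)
Your proposal is correct and follows essentially the same route as the paper's proof: condition on $\Fc_k$ using the independence of $(X^k,Y^k)$, then condition further on $X^k$ so that Lemma~\ref{l:conditional_subgradient} converts $\Eb[\hat{\partial}_\beta f\,|\,X]$ into $\hat{\partial}_\beta F$ (with $\nabla g(\varPsi(X^k,\theta^k))$ being $X^k$-measurable), and handle the $\theta$-block by the analogous conditioning of $f$ into $F$. The extra care you take with measurable selection rules is exactly the bookkeeping the paper leaves implicit in its convention for set-valued conditional expectations.
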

\begin{proof}
Since the observation $(X^k,Y^k)$ is independent of $\Fc_k$, Lemma \ref{l:conditional_subgradient}  yields the relations
\begin{align*}
\Eb[\tilde{d}_\beta^k|\Fc_k]  &\in -\Eb \big[ \hat{\partial}_{ \beta} f(X^k,Y^k,\beta^k) \nabla g(\varPsi(X^k,\theta^k)) ~\big|~ \Fc_k\big] \\
&= -\Eb \Big\{ \Eb \big[ \hat{\partial}_{ \beta} f(X^k,Y^k,\beta^k)  ~\big|~ \beta^k,\theta^k, X^k \big] \nabla g(\varPsi(X^k,\theta^k)) ~\Big|~ \beta^k,\theta^k \Big\}\\
&= -\Eb \big[ \hat{\partial}_{ \beta} F(X^k,\beta^k) \nabla g(\varPsi(X^k,\theta^k)) ~\big|~ \beta^k,\theta^k \big]
= \varGamma_\beta (\beta^k,\theta^k).
\end{align*}
The $\theta$-component of the direction can also be treated by conditioning:
\begin{align*}
\Eb[\tilde{d}_\theta^k|\Fc_k] &\in \gamma \Eb \big[ \hat{\partial}_{ \theta} \varPsi(X^k,\theta^k)\big(f(X^k,Y^k,\beta^k) -\varPsi(X^k,\theta^k)\big)\big| \Fc_k\big]\\
&= \gamma \Eb \Big\{\hat{\partial}_{ \theta} \varPsi(X^k,\theta^k)\Eb\big[f(X^k,Y^k,\beta^k) -\varPsi(X^k,\theta^k)\big| X^k,\beta^k,\theta^k\big]\Big| \beta^k,\theta^k\Big\}\\
&= \gamma \Eb \big\{\hat{\partial}_{ \theta} \varPsi(X^k,\theta^k)\big(F(X^k,\beta^k) -\varPsi(X^k,\theta^k)\big)\big| \beta^k,\theta^k\big\} = \varGamma_\theta(\beta^k,\theta^k).
\end{align*}
\end{proof}

We can also estimate the conditional variance of the errors $\{e^k\}_{k\ge 0}$.
\begin{lemma}
    \label{l:e2-bounded}
Under Assumptions \textup{(A1)--(A4)}, the directions $\{d^k\}_{k\ge 0}$ are well-defined and a constant $\sigma > 0$ exists, such that
$\Eb\big[\|e^k\|^2 \,\big|\,\Fc_k\big] \le \sigma^2$, for all $k \ge 0$.
\end{lemma}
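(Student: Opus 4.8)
The plan is to bound $\Eb[\|e^k\|^2 \mid \Fc_k] = \Eb[\|\tilde{d}^k\|^2 \mid \Fc_k] - \|d^k\|^2 \le \Eb[\|\tilde{d}^k\|^2 \mid \Fc_k]$, so it suffices to produce a deterministic bound on $\Eb[\|\tilde{d}_\beta^k\|^2 \mid \Fc_k]$ and $\Eb[\|\tilde{d}_\theta^k\|^2 \mid \Fc_k]$ that is uniform over $(\beta^k,\theta^k)\in\Bb\times\Theta_R$. First I would observe that $d^k$ is well-defined: by Assumption (A3) the generalized subgradients $f_\beta^k$ are bounded in norm by $L_f(X^k,Y^k)$ with $\Eb[L_f^4]\le\widebar{L}_f^4$, and by (A2) $\|\nabla g^k\|\le L_g$, so $\|\tilde{d}_\beta^k\| \le L_g\, L_f(X^k,Y^k)$, which is integrable; similarly $\|\tilde{d}_\theta^k\| \le \gamma\, L_\psi(X^k)\,\|f^k-\varPsi^k\|$, integrable by Cauchy--Schwarz and (A3)--(A4). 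Hence the conditional expectations defining $d^k$ exist, and by Jensen $\|d^k\|^2 \le \Eb[\|\tilde{d}^k\|^2\mid\Fc_k]$, which is exactly why dropping $-\|d^k\|^2$ is harmless.

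For the $\beta$-part, since $(X^k,Y^k)$ is independent of $\Fc_k$,
\[
\Eb\big[\|\tilde{d}_\beta^k\|^2 \,\big|\, \Fc_k\big] \le L_g^2\, \Eb\big[L_f(X^k,Y^k)^2 \,\big|\, \Fc_k\big] = L_g^2\, \Eb[L_f(X,Y)^2] \le L_g^2\, \widebar{L}_f^2,
\]
using Jensen on the fourth-moment bound in (A3) to pass from $\Eb[L_f^4]\le\widebar{L}_f^4$ to $\Eb[L_f^2]\le\widebar{L}_f^2$. For the $\theta$-part, using $\|\tilde{d}_\theta^k\| \le \gamma\,L_\psi(X^k)\big(\|f^k\| + \|\varPsi^k\|\big)$ and Cauchy--Schwarz,
\[
\Eb\big[\|\tilde{d}_\theta^k\|^2 \,\big|\, \Fc_k\big] \le \gamma^2\, \Eb[L_\psi(X)^4]^{1/2}\, \Eb\big[(\|f(X,Y,\beta^k)\| + \|\varPsi(X,\theta^k)\|)^4\big]^{1/2}.
\]
The term $\Eb[\|f(X,Y,\beta^k)\|^4]\le C_f^4$ is handled directly by (A3). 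For $\varPsi(X,\theta^k)$ with $\|\theta^k\|\le R$, I would write $\varPsi(X,\theta^k) = \varPsi(X,\theta^0) + \int_0^1 \varPsi_\theta(X,\theta^0+\tau(\theta^k-\theta^0))^T(\theta^k-\theta^0)\,d\tau$ (the integral representation from Theorem~\ref{t:chain}, as used in Remarks~\ref{r:growth_nonlin} and \ref{r:infty_nonlin}), so that $\|\varPsi(X,\theta^k)\| \le \|\varPsi(X,\theta^0)\| + L_\psi(X)\,\|\theta^k-\theta^0\| \le \|\varPsi(X,\theta^0)\| + 2R\,L_\psi(X)$; raising to the fourth power, expanding, and applying (A4) (the bound $C_\varPsi^4$ on $\Eb[\|\varPsi(X,\theta^0)\|^4]$ and $\widebar{L}_\varPsi^4$ on $\Eb[L_\psi^4]$) together with Cauchy--Schwarz for the cross terms gives a finite bound depending only on $C_\varPsi$, $\widebar{L}_\varPsi$, and $R$. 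Combining the two parts, $\sigma^2$ can be taken as $L_g^2\widebar{L}_f^2$ plus this $\theta$-bound, which is a constant independent of $k$.

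The only mildly delicate point is the $\theta$-part: one must control $\|\varPsi(X,\theta^k)\|$ uniformly over the ball $\Theta_R$, which is precisely where the $L_\psi$-growth bound in (A4) and the projection of $\theta^k$ onto $\Theta_R$ (so $\|\theta^k\|\le R$ for all $k$, by the update rule \eqref{method}) are needed; everything else is Jensen, Cauchy--Schwarz, and the fourth-moment assumptions. No genuine obstacle arises.
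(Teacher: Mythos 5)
Your proposal is correct and follows essentially the same route as the paper: bound the conditional second moment of $\tilde{d}^k$ instead of $e^k$, use (A2)--(A3) for the $\beta$-part, and for the $\theta$-part combine the Cauchy--Schwartz inequality with the fourth-moment bounds in (A3)--(A4) and the Lipschitz estimate $\|\varPsi(X,\theta^k)\|\le\|\varPsi(X,\theta^0)\|+L_\psi(X)\|\theta^k-\theta^0\|$ together with $\|\theta^k\|\le R$. The only cosmetic difference is that the paper first splits $\|f^k-\varPsi^k\|^2\le 2\|f^k\|^2+2\|\varPsi^k\|^2$ and applies Cauchy--Schwartz termwise, whereas you apply it to the combined term; the constants differ but the argument is the same.
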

\begin{proof}
For the $\beta$-component $e_\beta^k$ of $e^k$,  we have
\[
\tilde{d}_\beta^k = -f_\beta(X^k,Y^k,\beta^k)\nabla g(\varPsi(X^k,\theta^k)),
\]
and thus, by Assumptions (A2) and (A3),
\[
\Eb\big[ \|{e}_\beta^k\|^2\big| \Fc_k ] \le \Eb\big[ \|\tilde{d}_\beta^k\|^2\big| \Fc_k ]
\le L_g^2 \Eb \big[ \| f_\beta(X^k,Y^k,\beta^k)\|^2 \big| \Fc_k \big]\le
 L_g^2 \widebar{L}_f^2.
\]
For the $\theta$-component,
\begin{align*}
\Eb\big[ \| \tilde{d}_\theta^k\|^2\big| \Fc_k\big]
&\le \gamma^2 \Eb\big[ \|\varPsi_\theta(X^k,\theta^k)\|^2 \cdot \| f(X^k,Y^k,\beta^k)  - \varPsi(X^k,\theta^k)\|^2\big| \Fc_k\big]\\
&\le \gamma^2 \Eb\big[{L_\psi}(X^k)^2\| f(X^k,Y^k,\beta^k)  - \varPsi(X^k,\theta^k)\|^2\big| \Fc_k\big]\\
&\le 2\gamma^2 \Eb\big[{L_\psi}(X^k)^2\| f(X^k,Y^k,\beta^k)\|^2\big| \Fc_k\big]\\
&{\quad} +2 \gamma^2 \Eb\big[ {L_\psi}(X^k)^2\|\varPsi(X^k,\theta^k)\|^2\big| \Fc_k\big].
\end{align*}
Using the Cauchy--Schwartz inequality and Assumptions (A3) and (A4) we obtain
\[
\Eb\big[{L_\psi}(X^k)^2\| f(X^k,Y^k,\beta^k)\|^2\big| \Fc_k\big]
\le  \widebar{L}_\varPsi^2 C_f^2.
\]
In a similar way, by (A4),
\begin{align*}
\lefteqn{\Eb\big[ {L_\psi}(X^k)^2\|\varPsi(X^k,\theta^k)\|^2\big| \Fc_k\big]}\\
&\le 2 \Eb\big[{L_\psi}(X^k)^2\big( \|\varPsi(X^k,\theta^0)\|^2 + {L_\psi}(X^k)^2 \|\theta^k-\theta^0\|^2\big)\big| \Fc_k\big]
\le 2 \widebar{L}_\varPsi^2 C_\varPsi^2 + 8R^2 \widebar{L}_\varPsi^4.
\end{align*}
Combining the last three inequalities, we conclude that
$\Eb\big[ \| e_\theta^k\|^2\big| \Fc_k\big] $, $k\ge 0$, are uniformly bounded as well.
\end{proof}
\begin{remark}
It is evident from the second part of the proof that if $L_\varPsi(X)$ is bounded by a constant, then
we only need
bounds on the second moments in \textup{(A3)} and \textup{(A4)}, rather than the fourth moments. Lemma \textup{\ref{l:e2-bounded}}
is the only result using the assumptions about the fourth moments.
\end{remark}
We can now analyze the convergence of the method \eqref{method}
by the differential inclusion approach {  originating from
\cite{volkonskii1970convergence}, and further developed in
\cite{belen1974iterative,benaim2005stochastic,benaim2006stochastic}};
see also \cite{ljung1977analysis,borkar2009stochastic,kushner2012stochastic,harold1997stochastic}.

For brevity, denote $z^k = (\beta^k,\theta^k) $, $k=0,1,\dots$, and $\Zb=\Bb \times\Theta_R$.
We consider a particular trajectory $z^k(\omega) = (\beta^k(\omega),\theta^k(\omega)) $, $k=0,1,\dots$; in our notation, we omit
the elementary event $\omega\in \Omega$, which may be arbitrary, except for a zero measure set.

First, we construct continuous-time trajectories by linear interpolation. We introduce the accumulated stepsizes
\[
t_k = \sum_{j=0}^{k-1}\tau_j,\quad k=0,1,\dots,
\]
and we construct the interpolated trajectory
\[
Z_0(t) = z^k + \frac{t-t_{k}}{\tau_k}(z^{k+1}-z^k),\quad t_{k}\le t \le t_{k+1},\quad k=0,1,\dots.
\]
Then, for an increasing sequence of positive numbers $\{s_m\}$ diverging to $\infty$, we define the shifted trajectories
\begin{equation}
\label{Z-shifted}
Z_m(t)=Z_0(t+s_m), \quad m=0,1,\dots,\quad t \ge 0.
\end{equation}
In particular, if we set $s_m = t_m$, we obtain $Z_m(0) = z^m$.

The following theorem follows directly from \cite[Thm. 4.1]{majewski2018analysis}.
\begin{theorem}
\label{t:diff-inc-beta}
Under Assumptions \textup{(A1)--(A4)} and \textup{(A7)},
the sequence $\big\{Z_m(\Cdot)\big\}_{m\ge 0}$ has a subsequence convergent a.s. to an absolutely continuous function $Z_\infty:[0,\infty) \to \Xc$ which is a solution of the differential inclusion:
\begin{equation}
\label{diff-inc-beta}
\dt{Z}_\infty(t) \in  \varGamma(Z_{\infty}(t))-\Nc_{\Zb}(Z_\infty(t)), \quad t \ge 0.
\end{equation}
Moreover, for any $t\ge 0$, the point $Z_\infty(t)$
is an accumulation point of the sequence $\{z^k\}_{k\ge 0}$.
\end{theorem}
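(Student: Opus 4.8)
The plan is to recognize the recursion \eqref{method} as a projected stochastic approximation scheme driven by a set-valued mean field, and simply to verify that the standing hypotheses of \cite[Thm.~4.1]{majewski2018analysis} are in force, so that its conclusion may be quoted. Write $z^k=(\beta^k,\theta^k)$ and $\tilde d^k=(\tilde d_\beta^k,\tilde d_\theta^k)$, so that \eqref{method} reads $z^{k+1}=\varPi_{\Zb}(z^k+\tau_k\tilde d^k)$ on the convex compact set $\Zb=\Bb\times\Theta_R$ (using (A1) and the choice $R\ge\underline{R}$). Decompose $\tilde d^k=d^k+e^k$ with $d^k=\Eb[\tilde d^k\mid\Fc_k]$, so that $e^k$ is a martingale difference. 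Then:
\begin{tightlist}{iv}
\item By Lemma~\ref{l:Gamma}, $\varGamma$ is nonempty-, convex-, compact-valued and upper-semicontinuous on the compact set $\Zb$; hence it is bounded, i.e.\ a Marchaud map.
\item By Lemma~\ref{l:dk-in-Gamma}, the drift satisfies $d^k\in\varGamma(z^k)$ for every $k$.
\item By Lemma~\ref{l:e2-bounded}, $\Eb[\|e^k\|^2\mid\Fc_k]\le\sigma^2$ for all $k$.
\item By (A7), $\tau_k>0$ is $\Fc_k$-measurable, $\sum_k\tau_k=\infty$ a.s., and $\Eb\sum_k\tau_k^2<\infty$, hence $\sum_k\tau_k^2<\infty$ a.s.
\end{tightlist}
These are exactly the assumptions required in \cite{majewski2018analysis} for the differential inclusion $\dt z\in\varGamma(z)-\Nc_{\Zb}(z)$. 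Applying \cite[Thm.~4.1]{majewski2018analysis} to the interpolated process $Z_0(\cdot)$ and its shifts \eqref{Z-shifted} yields, along almost every trajectory, relative compactness of $\{Z_m(\cdot)\}$ in the topology of uniform convergence on compact intervals, together with the fact that every limit point is an absolutely continuous solution of \eqref{diff-inc-beta}; this is the first assertion.

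For the ``moreover'' part, note first that $\tau_k\to0$ a.s.\ (from $\sum_k\tau_k^2<\infty$), and that, since $\tau_k$ is $\Fc_k$-measurable, $\Eb\sum_k\tau_k^2\|e^k\|^2=\Eb\sum_k\tau_k^2\Eb[\|e^k\|^2\mid\Fc_k]\le\sigma^2\,\Eb\sum_k\tau_k^2<\infty$, whence $\tau_k\|e^k\|\to0$ a.s.; combined with the uniform bound on $\{d^k\}$ (item (i)) this gives $\|z^{k+1}-z^k\|\le\tau_k\|\tilde d^k\|\to0$ a.s., so the oscillation of $Z_0(\cdot)$ on each interval $[t_k,t_{k+1}]$ vanishes. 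Now fix $t\ge0$ and take the convergent subsequence $Z_{m_j}(\cdot)\to Z_\infty(\cdot)$ with $s_{m_j}=t_{m_j}$; choosing for each $j$ an index $k_j$ with $t_{k_j}\le t+s_{m_j}\le t_{k_j+1}$ (possible because $\sum_k\tau_k=\infty$), the point $Z_{m_j}(t)=Z_0(t+s_{m_j})$ lies on the segment $[z^{k_j},z^{k_j+1}]$, so $\|z^{k_j}-Z_{m_j}(t)\|\le\|z^{k_j+1}-z^{k_j}\|\to0$, while $Z_{m_j}(t)\to Z_\infty(t)$. Therefore $z^{k_j}\to Z_\infty(t)$, i.e.\ $Z_\infty(t)$ is an accumulation point of $\{z^k\}$.

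The only real obstacle is one of bookkeeping: confirming that the projected, set-valued, martingale-noise recursion \eqref{method}--\eqref{dzk} matches the exact framework of \cite{majewski2018analysis} — in particular that $\varGamma$ qualifies there as a Marchaud map and that the noise and stepsize conditions of that paper are implied by (A7) together with Lemmas~\ref{l:Gamma}--\ref{l:e2-bounded}. Once this dictionary is in place, no further estimates are needed; the delicate analytic content (the ODE/differential-inclusion method and asymptotic pseudotrajectories) is entirely absorbed into the cited theorem.
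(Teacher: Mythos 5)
Your proposal is correct and matches the paper's treatment: the paper gives no separate proof, stating only that the result ``follows directly from'' \cite[Thm.~4.1]{majewski2018analysis} once Lemmas~\ref{l:Gamma}, \ref{l:dk-in-Gamma}, \ref{l:e2-bounded} and Assumption (A7) supply the hypotheses, which is exactly the verification you carry out. Your additional bookkeeping for the ``moreover'' clause (vanishing increments $\|z^{k+1}-z^k\|\to 0$ plus interpolation) is a sound and welcome elaboration of a step the paper leaves implicit in the citation.
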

Since any solution of \eqref{diff-inc-beta} must lie in $\Zb$,
we can  write the differential inclusion \eqref{diff-inc-beta} in an equivalent way \cite{aubin2012differential}:
\begin{equation}
\label{diff-inc-beta-T}
\dt{Z}_\infty(t) \in \varPi_{\Tc_{\Zb}(Z_\infty(t))}\big( \varGamma(Z_\infty(t))\big), \quad t \ge 0,
\end{equation}
which underlines the nature of the limiting subgradient process. Here, $\Tc_\Zb(z)$ is the tangent cone to the convex set $\Zb$ at $z$.

Our intention is to show that all equilibria of the system \eqref{diff-inc-beta} lie in $\Zb^*$.

To this end, we construct a specially tailored Lyapunov function with parameters $\alpha > 0$ and $\lambda \ge 0$:
\begin{equation}
\label{Lyapunov}
    w(\beta,\theta) \triangleq G(\beta) + \alpha \Delta^\lambda(\beta, \theta),
\end{equation}
where $G(\beta)$ is the CSO loss function in \eqref{cso} and
\begin{equation}
\Delta^{\lambda}(\beta,\theta) \triangleq \Eb\Big[g(F) - g(\varPsi) - \langle \nabla g(\varPsi), F - \varPsi \rangle + \frac{\lambda}{2} \|F-\varPsi\|^2\Big].
\label{te:g}
\end{equation}
If $\lambda > {L_{\nabla g}}$, where ${L_{\nabla g}}$ is defined in Assumption (A2), then
$\Delta^\lambda(\beta, \theta) \ge 0$ with strict inequality unless \eqref{tracking-exact} holds true.
Therefore, we can use $\Delta^\lambda(\beta, \theta)$  as
a tracking error of our parametric model. The values of $\alpha$, $\lambda$, and $\gamma$ will be chosen later.

 As mentioned in the Introduction,
$\Delta^0(\beta, \theta)$ is a  stochastic generalization of the Bregman distance; the function \eqref{te:g} is its regularized version.

\begin{lemma}
\label{l:w-gendif}
Under Assumptions \textup{(A2)--(A4)} the Lyapunov function $w(\Cdot,\Cdot)$ is differentiable in the generalized sense.
\end{lemma}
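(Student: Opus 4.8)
The plan is to reduce the statement to the generalized-differentiability calculus of the Appendix plus the interchange-of-expectation theorem. Since $G(\cdot)$ is already known to be generalized differentiable (see \eqref{G-sub}) and the class of generalized differentiable functions is closed under finite sums, it suffices to show that $\Delta^\lambda(\cdot,\cdot)$ in \eqref{te:g} is generalized differentiable; then $w = G + \alpha\Delta^\lambda$ inherits the property. Write the outer expectation in \eqref{te:g} as an expectation over $X$ only, $\Delta^\lambda(\beta,\theta) = \Eb_X\big[\phi(X,\beta,\theta)\big]$, where
\[
\phi(x,\beta,\theta) = g\big(F(x,\beta)\big) - g\big(\varPsi(x,\theta)\big) - \big\langle \nabla g(\varPsi(x,\theta)),\, F(x,\beta) - \varPsi(x,\theta)\big\rangle + \tfrac{\lambda}{2}\big\|F(x,\beta) - \varPsi(x,\theta)\big\|^2,
\]
with $F(x,\beta) = \Eb[f(x,Y,\beta)\mid X=x]$.

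First I would establish that the integrand $\phi(x,\cdot,\cdot)$ is generalized differentiable for almost every $x$. By Lemma \ref{l:conditional_subgradient} (applicable thanks to (A3)), the map $\beta\mapsto F(x,\beta)$ is generalized differentiable for a.e. $x$, with $\hat\partial_\beta F(x,\beta) = \Eb[\hat\partial_\beta f(x,Y,\beta)\mid x]$ as recorded in \eqref{dFbeta}; by (A4), $\theta\mapsto\varPsi(x,\theta)$ is generalized differentiable; and by (A2), $g(\cdot)$ is $C^1$ with generalized differentiable gradient $\nabla g(\cdot)$. The inner product and the squared norm are $C^\infty$. Each of the four summands of $\phi(x,\cdot,\cdot)$ is therefore obtained from these building blocks by composition and by finite products and sums, so Theorems \ref{t:composition} and \ref{t:chain}, together with the closure of the class under sums and products reviewed in the Appendix, imply that $\phi(x,\cdot,\cdot)$ is generalized differentiable on a neighborhood of $\Bb\times\Theta_R$ for a.e. $x$, with an explicitly computable generalized subdifferential.

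Next I would pass the $X$-expectation through using Theorem \ref{t:expected-generalized}. This requires, in a bounded neighborhood $K$ of any $(\beta,\theta)$, an $X$-integrable bound on the generalized subgradients of $\phi(X,\cdot,\cdot)$. Such a bound is assembled from (A2)--(A4) exactly in the spirit of Lemma \ref{l:Gamma} and Lemma \ref{l:e2-bounded}: $\nabla g$ is bounded by $L_g$ and is $L_{\nabla g}$-Lipschitz; the $\beta$-subgradients are controlled by $\Eb[L_f(X,Y)\mid X]$, which is integrable because $\Eb[L_f(X,Y)^4]\le\widebar L_f^4$; the $\theta$-subgradients involve $L_\psi(X)$ (with $\Eb[L_\psi(X)^4]\le\widebar L_\varPsi^4$) multiplied by the magnitudes $\|F(X,\beta)\|$ and $\|\varPsi(X,\theta)\|$, which are bounded over $K$ via $C_f$, $C_\varPsi$, $\widebar L_\varPsi$, and the radius of $K$. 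Invoking Theorem \ref{t:expected-generalized} then gives that $\Delta^\lambda(\cdot,\cdot)$ is generalized differentiable, and hence so is $w$.

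The main obstacle is the technical bookkeeping in the last step: verifying that the local integrable bound on the generalized subgradients of $\phi(X,\cdot,\cdot)$ genuinely follows from the fourth-moment hypotheses, the delicate point being the product terms $\langle\nabla g(\varPsi),F-\varPsi\rangle$ and $\|F-\varPsi\|^2$, in which two $X$-dependent Lipschitz factors multiply and force the use of Cauchy--Schwarz together with fourth (rather than second) moments. Once that bound is secured, the conclusion is a direct application of the composition/chain-rule and expectation calculus from the Appendix.
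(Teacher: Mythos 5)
Your proposal is correct and follows essentially the same route as the paper: establish generalized differentiability of the integrand via Lemma \ref{l:conditional_subgradient} and Theorem \ref{t:composition}, then pass the expectation through with Theorem \ref{t:expected-generalized} using a locally integrable subgradient bound obtained by Cauchy--Schwarz (the paper just splits $\Delta^\lambda=\Delta^0+\lambda Q$ and reuses the differentiability of $Q$ from \S\ref{s:2}, whereas you treat the whole integrand at once). The only minor quibble is that the product terms need only \emph{square}-integrability of $L_\varPsi(X)$ and $\|F-\varPsi\|$ (Cauchy--Schwarz then gives an integrable bound), not the fourth moments, which are reserved for Lemma \ref{l:e2-bounded}.
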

\begin{proof}
In \S \ref{s:2} we established the generalized differentiability of the functions $G(\Cdot)$ and $Q(\Cdot,\Cdot)$.  It remains to consider the function
\[
\Delta^{0}(\beta,\theta) = \Eb\Big[g(F) - g(\varPsi) - \langle \nabla g(\varPsi), F - \varPsi \rangle \Big],
\]
and the expression under the expected value:
\begin{equation}
    \label{delta0-beta}
\delta^{0}(\beta,\theta) = g(F) - g(\varPsi) - \langle \nabla g(\varPsi), F - \varPsi \rangle.
\end{equation}
By Assumptions (A2)--(A4) and Theorem \ref{t:composition},
$\delta^{0}(\Cdot,\Cdot)$ is Norkin differentiable. Its generalized subgradients with respect to $\beta$ have the form
\[
\delta^{0}_\beta = F_\beta\big( \nabla g(F) - \nabla g(\Psi)\big),
\]
and are bounded by $2L_g\Eb[\widebar{L}_f(X,Y)|X]$. It is an integrable function,
thanks to (A2).
In a similar way, by Assumptions (A2) and (A4) and Theorem \ref{t:composition}, the generalized subgradients
with respect to $\theta$ have the form
\begin{equation}
    \label{delta0-theta}
\delta^{0}_\theta = \varPsi_\theta H(\varPsi) ( F - \Psi),
\end{equation}
 or are convex combinations of elements of this structure,
 where we denote by $H(\varPsi)$
an arbitrary element of $\hat{\partial}(\nabla g)(\varPsi)$. Thus, their norm is
bounded by $L_{\nabla g}L_\varPsi(X)\|F-\varPsi\|$. As $L_\varPsi$
and $\|F-\varPsi\|$ are square-integrable, the bound is integrable, by virtue
of the Cauchy--Schwartz inequality.  Applying Theorem \ref{t:expected-generalized},
we conclude that the expected value function $\Delta^0$ is Norkin differentiable as well.
\end{proof}

\begin{comment}
Due to the assumption of richness in functional spaces \eqref{asmp}, a tracking error satisfies that for all $\beta \in {\Bb}$:
\begin{equation}
\begin{aligned}
(1)\ &\min_{\theta \in R^{n_\theta}} \Delta(\beta, \theta) = 0 \\
(2)\ &\{\theta : \Delta(\beta, \theta) = 0\} \subset \{\theta : F(X, \beta) = \varPsi(X, \theta)\ a.e.\}
\end{aligned}
\label{te:reqr}
\end{equation}
\end{comment}

Following the differential inclusion method, consider the function $W(t) = w(\beta(t),\theta(t))$, $t \ge 0$, where $(\beta(\Cdot),\theta(\Cdot)) \triangleq Z(\Cdot)$ is a solution of the
inclusion \eqref{diff-inc-beta}.

As the Lyapunov function $w(\Cdot,\Cdot)$ is Norkin differentiable, by virtue of Theorem \ref{t:composition}, its
weak derivative can be expressed by the chain rule:
\begin{equation}
\dt{W}
= \langle w_\beta, \dt{\beta} \rangle + \langle w_\theta, \dt{\theta} \rangle
= \langle G_\beta + \alpha \Delta^\lambda_\beta, \dt{\beta} \rangle + \alpha \langle \Delta^\lambda_\theta, \dt{\theta} \rangle.
\label{grad:P}
\end{equation}
Due to Theorem \ref{t:chain}, the integral  $\int_0^T \dt{W}(t)\,\D t$ on any interval of time does not depend on the particular selections of the generalized subgradients used in this calculation.

\begin{comment}

\begin{lemma}
A differentiable and $L$-Lipschitz smooth function $g: \Rb^n \to \Rb$ satisfies $\|\nabla^2 g(x)\| \le L$ for all $x \in \Rb^n$ and $\nabla^2 g(x) \in \partial \nabla g(x)$.
\label{l:normH}
\end{lemma}
\begin{proof}
Denoting $\sigma(\cdot)$ as the singular value and $\lambda(\cdot)$ as the eigenvalue,
\begin{equation}
\|\nabla^2 g(x)\| = \max_i \sigma_i (\nabla^2 g(x)) = \max_i |\lambda_i(\nabla^2 g(x))| \le L
\end{equation}
The second equation holds due to real symmetric $\nabla^2 g(x)$ and the last inequality is implied by $L$-Lipschitz smoothness.
\end{proof}
\end{comment}

In the following lemmas, we estimate the norms of the generalized subgradients of some relevant functions involved in the estimation of the derivative of the Lyapunov function \eqref{Lyapunov}.
\begin{lemma}
Under Assumptions \textup{(A2)} and \textup{(A4)},  $\big|\langle \Delta^0_\theta, Q_\theta \rangle \big| \le 2 {L_{\nabla g}} \widebar{L}_\varPsi^2 Q$.
\label{l:QG_delta_theta}
\end{lemma}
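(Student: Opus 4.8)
The plan is to decouple the inner product by the Cauchy--Schwarz inequality and then bound each of the two factors separately in terms of $\sqrt{Q(\beta,\theta)}$. First I would recall the structural representations established earlier: from the $\theta$-block of \eqref{dQ}, every generalized subgradient $Q_\theta \in \hat{\partial}_\theta Q(\beta,\theta)$ is the expected value of a measurable selection $s(X)\big(F(X,\beta)-\varPsi(X,\theta)\big)$ with $s(X)\in\hat{\partial}_\theta\varPsi(X,\theta)$, so $\|s(X)\|\le L_\varPsi(X)$; and from \eqref{delta0-theta} together with the proof of Lemma \ref{l:w-gendif}, every $\Delta^0_\theta \in \hat{\partial}_\theta\Delta^0(\beta,\theta)$ is the expected value of a selection whose norm is at most $L_{\nabla g}\,L_\varPsi(X)\,\|F(X,\beta)-\varPsi(X,\theta)\|$, the extra factor being $\|H(\varPsi)\|\le L_{\nabla g}$, valid because $\nabla g$ is $L_{\nabla g}$-Lipschitz by (A2).

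Next, using Jensen's inequality, then the Cauchy--Schwarz inequality in $L^2$, Assumption (A4), and the identity $\Eb\big[\|F-\varPsi\|^2\big]=2Q(\beta,\theta)$, I would estimate
\[
\|Q_\theta\| \;\le\; \Eb\big[L_\varPsi(X)\,\|F-\varPsi\|\big] \;\le\; \big(\Eb[L_\varPsi(X)^2]\big)^{1/2}\big(\Eb[\|F-\varPsi\|^2]\big)^{1/2} \;\le\; \widebar{L}_\varPsi\sqrt{2Q},
\]
where $\Eb[L_\varPsi(X)^2]\le(\Eb[L_\varPsi(X)^4])^{1/2}\le\widebar{L}_\varPsi^2$. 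The same chain of inequalities, now carrying the additional factor $L_{\nabla g}$ from the bound on $\delta^0_\theta$, gives $\|\Delta^0_\theta\| \le L_{\nabla g}\widebar{L}_\varPsi\sqrt{2Q}$.

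Finally, combining $\big|\langle\Delta^0_\theta,Q_\theta\rangle\big| \le \|\Delta^0_\theta\|\,\|Q_\theta\|$ with the two bounds yields
\[
\big|\langle\Delta^0_\theta,Q_\theta\rangle\big| \;\le\; L_{\nabla g}\widebar{L}_\varPsi\sqrt{2Q}\cdot\widebar{L}_\varPsi\sqrt{2Q} \;=\; 2L_{\nabla g}\widebar{L}_\varPsi^2\,Q,
\]
which is exactly the asserted estimate. I do not anticipate a genuine obstacle here; the only point needing a little care is the bookkeeping observation that the pointwise norm bounds used above remain valid for arbitrary elements of the two generalized subdifferentials — in particular for the convex combinations of structured subgradients that appear in Lemma \ref{l:w-gendif} — so that the inequality holds uniformly over all admissible choices of $Q_\theta$ and $\Delta^0_\theta$.
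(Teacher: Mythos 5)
Your argument is correct and follows essentially the same route as the paper's proof: Cauchy--Schwarz on the inner product, then Jensen plus Cauchy--Schwarz in $L^2$ on each factor, using $\|H(\varPsi)\|\le L_{\nabla g}$, $\Eb[L_\varPsi(X)^2]\le \widebar{L}_\varPsi^2$, and $\Eb[\|F-\varPsi\|^2]=2Q$. If anything, your write-up is slightly more careful than the paper's (which compresses the two norm bounds into one line), and your closing remark about convex combinations of structured subgradients is a valid bookkeeping point.
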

\begin{proof}
In view of \eqref{delta0-theta},  $\Delta^0_\theta = \Eb\big[ \varPsi_\theta H(\varPsi)(F-\varPsi)\big]$. By virtue of Assumption (A2), $\|H(\varPsi)\| \le L_{\nabla g}$.
Also,
$Q_\theta = \Eb\big[ \varPsi_\theta(F-\varPsi)\big]$.
Using the Cauchy--Schwartz inequality,
we obtain the following chain of estimates:
\begin{align*}
\big|\langle \Delta^0_\theta, Q_\theta \rangle\big|  &\le \| \Delta^0_\theta\| \| Q_\theta \|
\le {L_{\nabla g}} \Eb[\|\varPsi_\theta\|^2] \Eb[\|F - \varPsi\|^2] \le 2 {L_{\nabla g}} \widebar{L}_\varPsi^2 Q.
\end{align*}
The last inequality uses \eqref{te:Q}.
\end{proof}

\begin{lemma}
Under Assumption \textup{(A3)},  $\|Q_\beta\|^2 \le 2 \widebar{L}_f^2 Q$.
\label{l:QG_Q_beta}
\end{lemma}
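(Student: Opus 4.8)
The statement to prove is $\|Q_\beta\|^2 \le 2\widebar{L}_f^2 Q$, where $Q_\beta \in \hat{\partial}_\beta Q(\beta,\theta)$ is a generalized subgradient of the error function \eqref{te:Q} with respect to $\beta$, and $Q = Q(\beta,\theta)$ is the error itself. The plan is to start from the explicit form of $\hat{\partial}_\beta Q$ given in \eqref{dQ}, namely that any $Q_\beta$ has the representation
\[
Q_\beta = \Eb\big[ f_\beta(X,Y,\beta)\,\big(F(X,\beta) - \varPsi(X,\theta)\big)\big],
\]
where $f_\beta(X,Y,\beta) \in \hat{\partial}_\beta f(X,Y,\beta)$ is a measurable selection (using that $F(X,\beta) - \varPsi(X,\theta)$ is $X$-measurable, so conditioning does not change the inner factor, exactly as in the derivation of \eqref{dQ}).

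\textbf{Key steps.} First I would apply Jensen's inequality to pull the norm inside the expectation: $\|Q_\beta\| \le \Eb\big[\|f_\beta(X,Y,\beta)\|\,\|F(X,\beta)-\varPsi(X,\theta)\|\big]$. Next, apply the Cauchy--Schwarz inequality in $L^2(\Omega)$ to split this into $\big(\Eb[\|f_\beta\|^2]\big)^{1/2}\big(\Eb[\|F-\varPsi\|^2]\big)^{1/2}$. Now use Assumption (A3): $\|f_\beta(X,Y,\beta)\| \le L_f(X,Y)$, and $\Eb[L_f(X,Y)^4] \le \widebar{L}_f^4$ implies in particular $\Eb[L_f(X,Y)^2] \le \widebar{L}_f^2$ (by Jensen applied to the square); hence $\Eb[\|f_\beta\|^2] \le \widebar{L}_f^2$. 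Finally recognize $\Eb[\|F-\varPsi\|^2] = 2Q$ from the definition \eqref{te:Q}. Squaring the resulting bound $\|Q_\beta\| \le \widebar{L}_f \sqrt{2Q}$ gives the claim.

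\textbf{Main obstacle.} There is no real obstacle here; this is a routine chain of Jensen and Cauchy--Schwarz estimates analogous to Lemma~\ref{l:QG_delta_theta}. The only point requiring a moment's care is to make sure the conditional expectation defining $F$ does not interfere: one must observe that the factor $F(X,\beta)-\varPsi(X,\theta)$ depends only on $X$, so that in \eqref{dQ} we may indeed write $Q_\beta$ as the unconditional expectation of $f_\beta(X,Y,\beta)(F(X,\beta)-\varPsi(X,\theta))$ with $f_\beta$ a selection of $\hat{\partial}_\beta f(X,Y,\beta)$ --- this is precisely the content of the last line of \eqref{dQ} and its justification ``by conditioning.'' Everything else is immediate.
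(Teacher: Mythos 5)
Your proof is correct and follows essentially the same route as the paper's: a Cauchy--Schwarz estimate splitting $\Eb[\|F_\beta\|^2]\le\widebar{L}_f^2$ (via Jensen from the fourth-moment bound in (A3)) from $\Eb[\|F-\varPsi\|^2]=2Q$. The only cosmetic difference is that the paper writes $Q_\beta=\Eb[F_\beta(F-\varPsi)]$ with $F_\beta=\Eb[f_\beta|X]$, whereas you keep the unconditional form $\Eb[f_\beta(F-\varPsi)]$; both yield the same bound.
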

\begin{proof}
By the  Cauchy--Schwartz inequality,

\[
\|Q_\beta\| = \big\|\Eb[ F_\beta(F-\varPsi)]\big\| \le \sqrt{\Eb[\|F_\beta\|^2]}\sqrt{ \Eb[\|F-\varPsi\|^2]} \le  \widebar{L}_f \sqrt{2 Q}.
\]
\end{proof}
\begin{lemma}
Under Assumptions \textup{(A2)} and \textup{(A3)}, $\|\Delta^0_\beta\|^2 \le 2 L_{\nabla g}^2 \widebar{L}_f^2 Q$.
\label{l:QG_delta_beta}
\end{lemma}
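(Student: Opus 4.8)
The plan is to reuse the expression for the generalized $\beta$-subgradient of $\delta^0$ derived in the proof of Lemma~\ref{l:w-gendif} and then estimate it by elementary inequalities, exactly as in Lemmas~\ref{l:QG_Q_beta} and~\ref{l:QG_delta_theta}. Recall from \eqref{delta0-beta} and that proof that $\Delta^0_\beta = \Eb\big[F_\beta\big(\nabla g(F)-\nabla g(\varPsi)\big)\big]$, where $F_\beta \in \hat{\partial}_{\beta} F(X,\beta)$ is a measurable selection; by Theorem~\ref{t:chain} the particular selection is immaterial. First I would pass the norm inside the expectation (the triangle/Jensen inequality for vector-valued expectations) and use the $L_{\nabla g}$-Lipschitz continuity of $\nabla g$ from (A2):
\[
\|\Delta^0_\beta\| \le \Eb\big[\|F_\beta\|\,\|\nabla g(F)-\nabla g(\varPsi)\|\big] \le L_{\nabla g}\,\Eb\big[\|F_\beta\|\,\|F-\varPsi\|\big].
\]

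Next I would apply the Cauchy--Schwarz inequality to the product under the last expectation, obtaining $\|\Delta^0_\beta\| \le L_{\nabla g}\sqrt{\Eb[\|F_\beta\|^2]}\,\sqrt{\Eb[\|F-\varPsi\|^2]}$, and recognize that $\Eb[\|F-\varPsi\|^2]=2Q$ by the definition \eqref{te:Q}. It then remains to bound $\Eb[\|F_\beta\|^2]$ by $\widebar{L}_f^2$, which is the only mildly delicate step. Since $F_\beta(X,\beta)=\Eb[s(X,Y)\,|\,X]$ for some selection $s(X,Y)\in\hat{\partial}_{\beta} f(X,Y,\beta)$ by Lemma~\ref{l:conditional_subgradient} and \eqref{dFbeta}, the conditional Jensen inequality gives $\|F_\beta(X,\beta)\|^2 \le \Eb[\|s(X,Y)\|^2\,|\,X] \le \Eb[L_f(X,Y)^2\,|\,X]$ using the subgradient bound in (A3); taking expectations and applying Jensen (or Lyapunov's moment inequality) together with the fourth-moment bound in (A3) yields $\Eb[\|F_\beta\|^2] \le \Eb[L_f(X,Y)^2] \le \big(\Eb[L_f(X,Y)^4]\big)^{1/2} \le \widebar{L}_f^2$.

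Combining these estimates gives $\|\Delta^0_\beta\| \le L_{\nabla g}\widebar{L}_f\sqrt{2Q}$, and squaring produces the claimed inequality $\|\Delta^0_\beta\|^2 \le 2 L_{\nabla g}^2 \widebar{L}_f^2 Q$. I do not anticipate any genuine obstacle here: the argument is entirely routine and parallels the two preceding lemmas, the only point requiring a little care being the two-step reduction from the fourth-moment hypothesis on $L_f$ down to the second-moment control of $\|F_\beta\|$ (conditional Jensen, followed by a Lyapunov/Jensen step for the moments).
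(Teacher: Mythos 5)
Your proposal is correct and follows essentially the same route as the paper: write $\Delta^0_\beta = \Eb[F_\beta(\nabla g(F)-\nabla g(\varPsi))]$, apply the Cauchy--Schwarz inequality and the $L_{\nabla g}$-Lipschitz continuity of $\nabla g$ from (A2), identify $\Eb[\|F-\varPsi\|^2]=2Q$, and bound $\Eb[\|F_\beta\|^2]$ by $\widebar{L}_f^2$ via (A3). Your explicit justification of that last bound (conditional Jensen on the selection from Lemma~\ref{l:conditional_subgradient}, then the Lyapunov moment inequality to pass from the fourth-moment hypothesis to the second moment) is a detail the paper leaves implicit, but it is exactly the intended argument.
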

\begin{proof}
By \eqref{delta0-beta}, $\Delta^0_\beta = \Eb[F_\beta(\nabla g(F) - \nabla g(\varPsi))$.
Similar to the proof of Lemma \ref{l:QG_Q_beta}, by the Cauchy--Schwartz inequality and Assumption (A2),
\begin{align*}
\|\Delta^0_\beta\|^2 &= \big\|\Eb[F_\beta(\nabla g(F) - \nabla g(\varPsi)) ]\big\|^2
\le \Eb[\|F_\beta\|^2]\, \Eb[\|\nabla g(F) - \nabla g(\varPsi)\|^2]  \\
&\le L_{\nabla g}^2 \Eb[\|F_\beta\|^2]\, \Eb[\|F - \varPsi\|^2]
\le 2L_{\nabla g}^2 \widebar{L}_f^2 Q.
\end{align*}
The last inequality uses Assumption (A3), as in Lemma \ref{l:QG_Q_beta}.
\end{proof}

% The first coordinate of the differential inclusion \eqref{diff-inc-beta} reads:
% \begin{equation}
% \dt{\beta} = \varPi_{\Tc_B(\beta)}\big(-\Eb[F_\beta \nabla g(\varPsi) ] \big) = \varPi_{\Tc_B(\beta)}\big( -G_\beta + \Delta^{0}_\beta \big),
% \label{grad:beta}
% \end{equation}
% where $n_B$ is a normal vector to $B$ at $\beta$.

{
We can now estimate the rate of descent of the Lyapunov function \eqref{Lyapunov}.
\begin{proposition}
Under assumptions \textup{(A2)} and \textup{(A3)},
\[
\langle w_\beta, \dt{\beta} \rangle \le \frac{\widebar{L}_f^2}{2} \big( (\alpha+1){L_{\nabla g}} + \alpha \lambda \big)^2 Q.
\]
Furthermore, if $Q=0$, then
\[
\langle w_\beta, \dt{\beta} \rangle = - \big\| \varPi_{\Tc_{\Bb}(\beta)}\big(-G_\beta \big)\big\|^2.
\]
\label{p:QG_L_delta_Q_beta}
\end{proposition}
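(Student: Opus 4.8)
Write $\Delta^\lambda = \Delta^0 + \lambda Q$, where $Q$ is the error \eqref{te:Q}; hence $w_\beta = G_\beta + \alpha\Delta^0_\beta + \alpha\lambda Q_\beta$. Since $\Zb = \Bb\times\Theta_R$ and the multifunction $\varGamma$ splits as $\varGamma = \varGamma_\beta\times\varGamma_\theta$ (the $\beta$- and $\theta$-parts of the integrand are selected independently for each $X$), the tangent cone and the metric projection in the reformulation \eqref{diff-inc-beta-T} factor coordinatewise. Consequently, for a.e.\ $t$,
\[
\dt\beta \;=\; \varPi_{\Tc_{\Bb}(\beta)}(v_\beta), \qquad v_\beta \;=\; -\Eb\big[F_\beta\,\nabla g(\varPsi)\big]\in\varGamma_\beta(\beta,\theta),
\]
for some measurable selection $F_\beta(X)\in\hat{\partial}_\beta F(X,\beta)$.

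The crux is an exact identity obtained by using this \emph{same} selection $F_\beta$ to represent every $\beta$-subgradient that occurs: by \eqref{G-sub}, $G_\beta = \Eb[F_\beta\nabla g(F)]$; by \eqref{delta0-beta}, $\Delta^0_\beta = \Eb[F_\beta(\nabla g(F)-\nabla g(\varPsi))]$; and, after conditioning as in Lemma \ref{l:conditional_subgradient}, $Q_\beta = \Eb[F_\beta(F-\varPsi)]$. Then $v_\beta = -G_\beta + \Delta^0_\beta$, so
\[
w_\beta \;=\; -\,v_\beta + r, \qquad r \;\triangleq\; (1+\alpha)\,\Delta^0_\beta + \alpha\lambda\,Q_\beta .
\]
This triple is a legitimate joint selection of $\hat{\partial}_\beta G\times\hat{\partial}_\beta\Delta^0\times\hat{\partial}_\beta Q$, so $w_\beta\in\hat{\partial}_\beta w$; by Theorem \ref{t:chain} the value of $\langle w_\beta,\dt\beta\rangle$ entering \eqref{grad:P} is independent of the selection, so we may work with this one.

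Now use that $\Tc_{\Bb}(\beta)$ is a closed convex cone (Assumption (A1)): the Moreau decomposition gives $\langle v_\beta,\varPi_{\Tc_{\Bb}(\beta)}(v_\beta)\rangle = \|\varPi_{\Tc_{\Bb}(\beta)}(v_\beta)\|^2$, so, writing $t = \|\varPi_{\Tc_{\Bb}(\beta)}(v_\beta)\|\ge 0$ and applying Cauchy--Schwarz together with $-t^2 + \|r\|\,t \le \tfrac14\|r\|^2$,
\[
\langle w_\beta,\dt\beta\rangle \;=\; -\,t^2 + \big\langle r,\, \varPi_{\Tc_{\Bb}(\beta)}(v_\beta)\big\rangle \;\le\; -\,t^2 + \|r\|\,t \;\le\; \tfrac14\|r\|^2 .
\]
Finally $\|r\|\le (1+\alpha)\|\Delta^0_\beta\| + \alpha\lambda\|Q_\beta\|$, and Lemmas \ref{l:QG_delta_beta} and \ref{l:QG_Q_beta} give $\|\Delta^0_\beta\|\le\sqrt2\,{L_{\nabla g}}\widebar{L}_f\sqrt Q$ and $\|Q_\beta\|\le\sqrt2\,\widebar{L}_f\sqrt Q$; substituting yields the claimed bound $\tfrac{\widebar{L}_f^2}{2}\big((\alpha+1){L_{\nabla g}}+\alpha\lambda\big)^2Q$. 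In the case $Q=0$, those same two lemmas force $\Delta^0_\beta=0$ and $Q_\beta=0$, so $r=0$ and $w_\beta=-v_\beta$; moreover $Q=0$ means $F=\varPsi$ a.s., hence $v_\beta = -\Eb[F_\beta\nabla g(\varPsi)] = -\Eb[F_\beta\nabla g(F)] = -G_\beta$ and $\dt\beta = \varPi_{\Tc_{\Bb}(\beta)}(-G_\beta)$, so the Moreau identity gives $\langle w_\beta,\dt\beta\rangle = \langle G_\beta,\varPi_{\Tc_{\Bb}(\beta)}(-G_\beta)\rangle = -\|\varPi_{\Tc_{\Bb}(\beta)}(-G_\beta)\|^2$.

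The one delicate point is the second paragraph: insisting on a single selection $F_\beta$ that simultaneously realizes $v_\beta$, $G_\beta$, $\Delta^0_\beta$, and $Q_\beta$, so that $v_\beta = -G_\beta+\Delta^0_\beta$ is a genuine equality rather than a set inclusion, and invoking the selection-independence of the chain-rule derivative (Theorem \ref{t:chain}) to certify that this subgradient is the one relevant to \eqref{grad:P}. Once that identity is in hand, the rest is a one-line convexity estimate and the two norm bounds already established.
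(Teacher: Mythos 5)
Your proof is correct and follows essentially the same route as the paper's: both identify $\dt{\beta}$ as the tangent-cone projection of $v_\beta=-G_\beta+\Delta^0_\beta$ (realized by a common selection $F_\beta$), reduce the problem to bounding $\tfrac14\|(1+\alpha)\Delta^0_\beta+\alpha\lambda Q_\beta\|^2$, and finish with Lemmas \ref{l:QG_Q_beta} and \ref{l:QG_delta_beta}. The only (cosmetic) difference is that you obtain the intermediate bound via the Moreau decomposition of the cone projection and the scalar inequality $-t^2+\|r\|t\le\tfrac14\|r\|^2$, whereas the paper completes the square with the linear projector $P=I-NN^T/\|N\|^2$ and optimizes a Young-inequality parameter $\rho$; both yield the identical constant, and your explicit attention to using one selection $F_\beta$ throughout makes rigorous a point the paper leaves implicit.
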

\begin{proof}
The first coordinate of the differential inclusion \eqref{diff-inc-beta} reads:
\begin{equation}
\dt{\beta} \in \varPi_{\Tc_{\Bb}(\beta)}\big(-\Eb[F_\beta \nabla g(\varPsi) ] \big) = \varPi_{\Tc_{\Bb}(\beta)}\big( -G_\beta + \Delta^{0}_\beta \big),
\label{grad:beta}
\end{equation}
For any solution $(\beta(t), \theta(t))$ of the inclusion \eqref{t:diff-inc-beta}, with the corresponding element $N(t) \in \Nc_{\Bb}(\beta(t))$, we define the projection matrix
\[
P(t)  = I - \frac{N(t) N(t)^T}{\|N(t)\|^2}.
\]
Then
\begin{equation}
\dt{\beta} = P (-G_\beta + \Delta^{0}_\beta).
\label{grad:beta_proj}
\end{equation}
Using the facts that $P$ is nonexpansive, $P^T=P$, and $P^2=P$,
by elementary manipulations, we obtain an upper bound:
\begin{equation}
\label{w-dot-chain}
\begin{aligned}
\lefteqn{\langle w_\beta, \dt{\beta} \rangle = -\langle G_\beta + \alpha \Delta^{\lambda}_\beta, P(G_\beta - \Delta^{0}_\beta) \rangle} \\
&= -\Big\|P\big(G_\beta + \frac{\alpha}{2} \Delta^{\lambda}_\beta - \frac{1}{2} \Delta^0_\beta\big)\Big\|^2 + \Big\|P\big(\frac{\alpha}{2} \Delta^{\lambda}_\beta + \frac{1}{2} \Delta^0_\beta\big)\Big\|^2 \\
&\le \Big\|P\big(\frac{\alpha}{2} \Delta^{\lambda}_\beta + \frac{1}{2} \Delta^0_\beta\big) \Big\|^2 =
\Big\|P\big(\frac{\alpha+1}{2} \Delta^0_\beta + \frac{\alpha \lambda}{2} Q_\beta\big)\Big\|^2 \\
&= \Big\|\frac{\alpha+1}{2} P \Delta^0_\beta\Big\|^2 + \Big\|\frac{\alpha \lambda}{2} P Q_\beta\Big\|^2 + 2 \Big\langle \rho \frac{\alpha+1}{2} P \Delta^0_\beta, \varrho^{-1} \frac{\alpha \lambda}{2} P Q_\beta \Big\rangle\\
&\le \frac{(\alpha+1)^2}{4} (1+\rho^2)\|P \Delta^0_\beta\|^2 + \frac{\alpha^2 \lambda^2}{4}(1+\rho^{-2})\|P Q_\beta\|^2,
\end{aligned}
\end{equation}
for any $\rho>0$.
Lemmas \ref{l:QG_Q_beta} and \ref{l:QG_delta_beta} allow us to continue the last estimate as follows:
\begin{align*}
\langle w_\beta, \dt{\beta} \rangle &\le \min_{\rho > 0} \Big[ \frac{(\alpha+1)^2}{4} (1+\rho^2)\|P \Delta^0_\beta\|^2 + \frac{\alpha^2 \lambda^2}{4}(1+\rho^{-2})\|P Q_\beta\|^2 \Big] \\
&\le \min_{\rho > 0} \Big[ \frac{(\alpha+1)^2}{4} (1+\rho^2)\|\Delta^0_\beta\|^2 + \frac{\alpha^2 \lambda^2}{4}(1+\rho^{-2})\|Q_\beta\|^2 \Big] \\
&\le \min_{\rho > 0} \Big[ \frac{(\alpha+1)^2}{2} (1+\rho^2) L_{\nabla g}^2 + \frac{\alpha^2 \lambda^2}{2}(1+\rho^{-2}) \Big]   \widebar{L}_f^2 Q \\
&= \frac{\widebar{L}_f^2}{2} \big( (\alpha+1){L_{\nabla g}} + \alpha \lambda \big)^2 Q.
\end{align*}
This proves the first assertion.
\begin{comment}
\begin{align*}
\langle w_\beta, \dt{\beta} \rangle &= -\langle G_\beta + \alpha \Delta^{\lambda}_\beta, P(G_\beta - \Delta^{0}_\beta) \rangle \\
&= -\Big\|G_\beta + \frac{\alpha}{2} \Delta^{\lambda}_\beta - \frac{1}{2} \Delta^0_\beta\Big\|_P^2 + \Big\|\frac{\alpha}{2} \Delta^{\lambda}_\beta + \frac{1}{2} \Delta^0_\beta\Big\|_P^2 \\
&\le \Big\|\frac{\alpha}{2} \Delta^{\lambda}_\beta + \frac{1}{2} \Delta^0_\beta\Big\|_P^2 =
\Big\|\frac{\alpha+1}{2} \Delta^0_\beta + \frac{\alpha \lambda}{2} Q_\beta\Big\|_P^2 \\
&= \Big\|\frac{\alpha+1}{2} \Delta^0_\beta\Big\|_P^2 + \Big\|\frac{\alpha \lambda}{2} Q_\beta\Big\|_P^2 + 2 \Big\langle \rho \frac{\alpha+1}{2} \Delta^0_\beta, \varrho^{-1} \frac{\alpha \lambda}{2} P Q_\beta \Big\rangle \\
&\le \min_{\rho > 0} \Big[ \frac{(\alpha+1)^2}{4} (1+\rho^2)\|\Delta^0_\beta\|_P^2 + \frac{\alpha^2 \lambda^2}{4}(1+\rho^{-2})\|Q_\beta\|_P^2 \Big] \\
&\le \min_{\rho > 0} \Big[ \frac{(\alpha+1)^2}{4} (1+\rho^2)\|\Delta^0_\beta\|^2 + \frac{\alpha^2 \lambda^2}{4}(1+\rho^{-2})\|Q_\beta\|^2 \Big] \\
&\le \min_{\rho > 0} \Big[ \frac{(\alpha+1)^2}{2} (1+\rho^2) {L_{\nabla g}}^2 + \frac{\alpha^2 \lambda^2}{2}(1+\rho^{-2}) \Big]   \widebar{L}_f^2 M \|Q_\theta\|^2 \\
&= \frac{\widebar{L}_f^2 M}{2} \big( (\alpha+1){L_{\nabla g}} + \alpha \lambda \big)^2 \|Q_\theta\|^2.
\end{align*}
\end{comment}
Now, suppose $Q=0$. This implies that $\varDelta^\lambda=0$ for all $\lambda \ge 0$.
The first line of \eqref{w-dot-chain} yields
\[
\langle w_\beta, \dt{\beta} \rangle = - \| P G_\beta \|^2 = - \big\| \varPi_{\Tc_{\Bb}(\beta)}\big(-G_\beta \big)\big\|^2,
\]
as claimed.
\end{proof}

}

% \begin{align*}
% \langle w_\beta, \dt{\beta} \rangle &= -\langle G_\beta + \alpha \Delta^{\lambda}_\beta, G_\beta - \Delta^{0}_\beta \rangle \\
% &= -\Big\|G_\beta + \frac{\alpha-1}{2} \Delta^{\lambda}_\beta + \frac{\lambda}{2} Q_\beta\Big\|^2 + \Big\|\frac{\alpha+1}{2} \Delta^{\lambda}_\beta - \frac{\lambda}{2} Q_\beta\Big\|^2 \\
% &\le \Big\|\frac{\alpha+1}{2} \Delta^{\lambda}_\beta - \frac{\lambda}{2} Q_\beta\Big\|^2 =
% \Big\|\frac{\alpha+1}{2} \Delta^0_\beta + \frac{\alpha \lambda}{2} Q_\beta\Big\|^2 \\
% &= \Big\|\frac{\alpha+1}{2} \Delta^0_\beta\Big\|^2 + \Big\|\frac{\alpha \lambda}{2} Q_\beta\Big\|^2 + 2 \Big\langle \rho \frac{\alpha+1}{2} \Delta^0_\beta, \varrho^{-1} \frac{\alpha \lambda}{2} Q_\beta \Big\rangle \\
% &\le \min_{\rho > 0} \Big[ \frac{(\alpha+1)^2}{4} (1+\rho^2)\|\Delta^0_\beta\|^2 + \frac{\alpha^2 \lambda^2}{4}(1+\rho^{-2})\|Q_\beta\|^2 \Big] \\
% &\le \min_{\rho > 0} \Big[ \frac{(\alpha+1)^2}{2} (1+\rho^2) {L_{\nabla g}}^2 + \frac{\alpha^2 \lambda^2}{2}(1+\rho^{-2}) \Big]   \widebar{L}_f^2 M \|Q_\theta\|^2 \\
% &= \frac{\widebar{L}_f^2 M}{2} \big( (\alpha+1){L_{\nabla g}} + \alpha \lambda \big)^2 \|Q_\theta\|^2.
% \end{align*}

The second coordinate of the differential inclusion \eqref{diff-inc-beta} reads:
\begin{equation}
\dt{\theta} =  \varPi_{\Tc_{\Theta_R}(\theta)}\big(\gamma\, \Eb[(F - \varPsi)^T \varPsi_\theta] \big)
= \varPi_{\Tc_{\Theta_R}(\theta)}\big( -\gamma Q_\theta\big) = -\gamma Q_\theta,
\label{grad:simp}
\end{equation}
because, by virtue of  Assumption (A6), when $\|\theta\|=R$, then $ -\gamma Q_\theta \in \Tc_{\Theta_R}(\theta)$.

\begin{theorem}
\label{t:negative-der}
Under Assumptions \textup{(A1)--(A6)}, if
\begin{equation}
\label{negative-der}
\lambda > 2 \widebar{L}_\varPsi^2 M {L_{\nabla g}} \quad \text{and} \quad
\gamma > \frac{\widebar{L}_f^2 M\big( (\alpha + 1){L_{\nabla g}} + \alpha \lambda \big)^2}{2 \alpha \big(\lambda - 2 \widebar{L}_\varPsi^2 M {L_{\nabla g}} \big)} ,
\end{equation}
then $\dt{W} \le 0$. Furthermore, $\dt{W} < 0$ if $(\beta,\theta)\notin \Zb^*$.
\end{theorem}
\begin{proof}
Following \eqref{grad:P}, \eqref{grad:simp}, Lemma \ref{l:QG_delta_theta}, and Proposition \ref{p:QG_L_delta_Q_beta},
\begin{align*}
\dt{W}
&\le \frac{\widebar{L}_f^2}{2} \big( (\alpha+1){L_{\nabla g}} + \alpha \lambda \big)^2 Q - \alpha \gamma \langle \Delta^{\lambda}_\theta, Q_\theta \rangle \\
&= \frac{\widebar{L}_f^2}{2} \big( (\alpha+1){L_{\nabla g}} + \alpha \lambda \big)^2 Q - \alpha \gamma \big[\langle \Delta^0_\theta, Q_\theta \rangle + \lambda \|Q_\theta\|^2 \big] \\
&\le \frac{\widebar{L}_f^2}{2} \big( (\alpha+1){L_{\nabla g}} + \alpha \lambda \big)^2 Q + \alpha \gamma |\langle \Delta^0_\theta, Q_\theta \rangle| - \alpha \gamma \lambda \|Q_\theta\|^2 \\
&\le \Big[\frac{\widebar{L}_f^2}{2} \big( (\alpha+1){L_{\nabla g}} + \alpha \lambda \big)^2 + \alpha \gamma \cdot 2{L_{\nabla g}} \widebar{L}_\varPsi^2 - \frac{\alpha \gamma \lambda}{M} \Big] Q \\
&= \frac{1}{M} \Big[\frac{\widebar{L}_f^2 M}{2} \big( (\alpha+1){L_{\nabla g}} + \alpha \lambda \big)^2 - \alpha \gamma \big(\lambda  - 2 \widebar{L}_\varPsi^2 M{L_{\nabla g}} \big) \Big] Q \le 0.
\end{align*}
This verifies the first assertion.
Now, suppose $\dt{W}=0$. It follows from the last inequality that $Q=0$. Then, Proposition \ref{p:QG_L_delta_Q_beta} implies that
\[
\varPi_{\Tc_{\Bb}(\beta)}\big(-G_\beta \big) = 0
\]
for some $G_\beta \in \hat{\partial}{G}(\beta)$. Consequently, using the decomposition into the projections on polar cones,
we obtain
\[
-G_\beta = \varPi_{\Tc_{\Bb}(\beta)}\big(-G_\beta \big) + \varPi_{\Nc_{\Bb}(\beta)}\big(-G_\beta \big) =
\varPi_{\Nc_{\Bb}(\beta)}\big(-G_\beta \big).
\]
This means that the optimality condition \eqref{optimality} is satisfied.
\end{proof}
In order to prove the convergence of the discrete-time algorithm, we need an additional technical assumption (known as the \emph{Sard condition}).
\begin{description}
\item[(A8)] The set $\{G(\beta): \beta\in {\Bb}^*\}$ does not contain an interval of nonzero length.
\end{description}
\begin{theorem}
    \label{t:full-convergence}
    Assume conditions {\rm (A1)--(A8)} and \eqref{negative-der}. Then, except for a zero measure set $\varOmega_0\subset \varOmega$,  the sequence $\big\{w(z^k(\omega))\big\}_{k\in \Nb}$ is convergent, and every accumulation point of the sequence
$\big\{z^k(\omega)\big\}_{k\in \Nb}$ is an element of $\Zb^*$.
\end{theorem}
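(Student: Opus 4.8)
The plan is to derive Theorem~\ref{t:full-convergence} from the differential-inclusion convergence result of Theorem~\ref{t:diff-inc-beta} together with the strict Lyapunov property of $w(\Cdot,\Cdot)$ established in Theorem~\ref{t:negative-der}, following the Lyapunov (LaSalle-type) criterion for internally chain transitive sets \cite{benaim2005stochastic}. Fix an elementary event $\omega$ outside the union $\varOmega_0$ of the null sets arising in Lemma~\ref{l:e2-bounded}, in Assumption~(A7), and in Theorem~\ref{t:diff-inc-beta}. Since the iterates $\{z^k(\omega)\}$ all lie in the compact set $\Zb=\Bb\times\Theta_R$, their set $L=L(\omega)$ of accumulation points is nonempty and compact. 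The first step is to record that, by the asymptotic-pseudotrajectory estimates underlying Theorem~\ref{t:diff-inc-beta} (see \cite{majewski2018analysis,benaim2005stochastic}), the set $L$ is, almost surely, \emph{internally chain transitive} -- hence connected -- and invariant under the solution flow of the differential inclusion \eqref{diff-inc-beta}: every $z^*\in L$ lies on a complete solution $Z:[0,\infty)\to\Zb$ of \eqref{diff-inc-beta} with $Z(0)=z^*$ and $Z(t)\in L$ for all $t\ge 0$.

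Next I would verify that $w$ is a strict Lyapunov function for the set $\Zb^*$. The function $w$ is continuous on $\Zb$, since Lemma~\ref{l:w-gendif} gives generalized differentiability, hence local Lipschitz continuity, and $\Zb$ is compact. Along any solution $Z(\Cdot)=(\beta(\Cdot),\theta(\Cdot))$ of \eqref{diff-inc-beta}, the function $W=w\circ Z$ is absolutely continuous and, by Theorem~\ref{t:negative-der} under the parameter conditions \eqref{negative-der}, $\dt{W}(t)\le 0$ for a.e.\ $t$, with $\dt{W}(t)<0$ for a.e.\ $t$ such that $Z(t)\notin\Zb^*$. Hence $W$ is nonincreasing, and since $\Zb^*$ is closed and $Z$ continuous, $W(t)<W(0)$ for all $t>0$ whenever $Z(0)\notin\Zb^*$. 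Moreover $\Zb^*$ is closed, and because $Q(\beta,\theta)=0$ forces $F(X,\beta)=\varPsi(X,\theta)$ a.s.\ and therefore $\Delta^\lambda(\beta,\theta)=0$, we have $w(\Zb^*)=\{G(\beta):\beta\in\Bb^*\}$, which by Assumption~(A8) contains no interval, i.e.\ has empty interior.

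Then I would apply the Lyapunov criterion: since $L$ is internally chain transitive, $w$ is a Lyapunov function for $\Zb^*$, and $w(\Zb^*)$ has empty interior, it follows (cf.\ \cite[Prop.~3.27]{benaim2005stochastic}) that $w$ is constant on $L$, say $w|_L\equiv c$, and $L\subseteq\Zb^*$. The inclusion $L\subseteq\Zb^*$ can also be seen directly: given $z^*\in L$, take the complete solution $Z$ through $z^*$ that stays in $L$; then $W\equiv c$, so $\dt{W}\equiv 0$ a.e., so by the strict-descent property just noted $Z(t)\in\Zb^*$ for a.e.\ $t$, and closedness of $\Zb^*$ together with continuity of $Z$ gives $z^*=Z(0)\in\Zb^*$. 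The role of the Sard-type Assumption~(A8) is precisely to preclude the remaining degenerate possibility, namely that the connected set $L\subseteq\Zb^*$ maps under $w$ onto a nontrivial subinterval of $G(\Bb^*)$, which would allow $\{w(z^k)\}$ to oscillate.

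The two assertions of Theorem~\ref{t:full-convergence} now follow. Every accumulation point of $\{z^k(\omega)\}$ lies in $L\subseteq\Zb^*$. For the convergence of $\{w(z^k(\omega))\}$: this real sequence is bounded, as $w$ is continuous on the compact $\Zb$, and any of its limit points equals $w(z^*)$ for some accumulation point $z^*\in L$ of $\{z^k\}$ -- extract a convergent sub-subsequence of the iterates and use continuity of $w$ -- hence equals $c$; therefore $w(z^k(\omega))\to c$. I expect the main obstacle to be the first step, the internal chain transitivity and invariance of $L$: this is the technical core of the differential-inclusion method and rests on showing that the interpolated process is an asymptotic pseudotrajectory of \eqref{diff-inc-beta}, which in turn uses the convergence of the martingale noise (Lemma~\ref{l:e2-bounded} with the stepsize conditions (A7)), the uniform boundedness and upper semicontinuity of $\varGamma(\Cdot,\Cdot)$ (Lemma~\ref{l:Gamma}), and Lemma~\ref{l:dk-in-Gamma}; these are exactly the hypotheses packaged into the cited result \cite{majewski2018analysis}. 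Everything after that is a routine application of the Lyapunov principle for differential inclusions.
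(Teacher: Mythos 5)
Your proposal is correct, but it follows a genuinely different route from the paper. The paper proves Theorem \ref{t:full-convergence} by the classical level-crossing (Nurminskii-type) contradiction argument: assuming $\liminf_k w(z^k)<\limsup_k w(z^k)$, it uses Assumption (A8) to pick a level $v$ outside $w(\Zb^*)$, extracts crossing indices, applies Theorem \ref{t:diff-inc-beta} to the trajectories shifted to those indices, and uses the strict descent from Theorem \ref{t:negative-der} on an initial interval to contradict the crossing structure; a second, similar argument excludes accumulation points outside $\Zb^*$. You instead invoke the Bena\"{i}m--Hofbauer--Sorin machinery: the limit set $L$ of the iterates is internally chain transitive and flow-invariant, $w$ is a strict Lyapunov function for $\Zb^*$ with $w(\Zb^*)=G(\Bb^*)$ of empty interior by (A8), hence $w$ is constant on $L$ and $L\subseteq\Zb^*$. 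Both arguments are sound and rest on the same three ingredients (the APT/differential-inclusion limit, the Lyapunov descent of Theorem \ref{t:negative-der}, and the Sard condition (A8)). The trade-off is that your route is more modular and dispatches both conclusions at once via \cite[Prop.~3.27]{benaim2005stochastic}, but it requires a property strictly stronger than what Theorem \ref{t:diff-inc-beta} as stated provides, namely the internal chain transitivity and invariance of $L$; you correctly flag this as the technical core and it does follow from the same asymptotic-pseudotrajectory estimates in \cite{majewski2018analysis,benaim2005stochastic} under the hypotheses already verified (Lemmas \ref{l:Gamma}, \ref{l:dk-in-Gamma}, \ref{l:e2-bounded} and (A7)), but it would need to be stated and justified explicitly rather than read off from Theorem \ref{t:diff-inc-beta}. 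The paper's crossing argument, by contrast, uses only the subsequence-convergence statement actually proved, at the cost of a more hands-on bookkeeping of crossing times. One small point worth making explicit in your write-up: the strict inequality $W(t)<W(0)$ for $Z(0)\notin\Zb^*$ follows because $\Zb^*$ is closed (by upper semicontinuity of $\hat{\partial}G$ and $\Nc_{\Bb}$ and continuity of $Q$), so $Z(s)\notin\Zb^*$ on a set of positive measure near $0$, where $\dt{W}<0$; you state this but the closedness of $\Zb^*$ deserves a line of proof.
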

\begin{proof}
The proof follows the steps of the proofs of \cite[Thm. 3.20]{duchi2018stochastic} or \cite[Thm. 3.5]{majewski2018analysis} {  or \cite{nurminskii1972convergence}}.
We provide a short outline here, for the convenience of the Reader. For brevity, we omit the
argument $\omega$, but we stress that we consider a particular path of the method.

Define
\[
\underline{w} = \liminf_{k \to \infty^{\phantom{|}}} w(z^k), \qquad \overline{w} = \limsup_{k \to \infty}w(z^k).
\]
Suppose $\underline{w} < \overline{w}$. By Assumption (A7), a value ${v}\in
(\underline{w},\overline{w})$ exists,
such that ${v} \notin G(\Bb^*)$. Take any $u \in ({v},\overline{w})$.
Define $L_1=\min\big\{k : w(z^k) \le {v} \big\}$ and for
$j=1,2,\dots$
\begin{align*}
\ell_j &= \min\big\{k\ge L_j : w(z^k) > u \big\},\\
L_{j+1} &= \min\big\{k\ge \ell_j : w(z^k) \le {v} \big\}.\\
\intertext{Then we define}
k_j &= \max\big\{ k\in [L_j,\ell_j]: w(z^k) \le {v} \big\}, \quad j=1,2,\dots.
\end{align*}
Both $k_j\to \infty$ and $\ell_j\to \infty$, as $j\to \infty$.

Consider the sums
$T_j = \sum_{k=k_j}^{\ell_j-1} \tau_k$. Due to the equicontinuity of the functions
 $w(Z_j(\Cdot))$, a constant $T >0$ exists, such that $T_j\ge T$ for all sufficiently
large $j$. Set $s_j=t_{k_j}$ in \eqref{Z-shifted} and consider the interval $[0,T]$.
By Theorem \ref{t:diff-inc-beta}, a subsequence of the sequence $\{Z_j(\Cdot)\}$ is convergent to a solution $Z_\infty(\Cdot)$ of the differential
inclusion \eqref{diff-inc-beta}. For simplicity, we still denote it by $\{Z_j(\Cdot)\}$. As $W(\Cdot)$ is absolutely continuous,
\begin{equation}
\label{gradient-desc-cont}
w(Z_\infty(T)) = w(Z_\infty(0)) + \int_0^T  \dt{W}(t)  \;\D t.
\end{equation}
Since ${v} \notin w(\Zb^*)$, then  $Z_\infty(0) = \lim_{j\to \infty} z^{k_j} \notin \Zb^*$.
Theorem \ref{t:negative-der} then implies that $\dt{W}(0)<0$.
The generalized subdifferential $\hat{\partial} G(\Cdot)$ and the normal cone $\Nc_\Bb(\Cdot)$ are upper semicontinuous and closed-valued. The function $Q(\Cdot,\Cdot)$ is continuous.
Therefore, $\dt{W}(t)<0$
 in some interval $t\in [0,\bar{t}\,]$ of positive length.

 It follows from \eqref{gradient-desc-cont}
 that $w(Z_\infty(T)) < w(Z_\infty(0)) =v$.
 However, $Z_\infty(T)$ is the limit of the sequence $\{z^{m_j}\}$, where
\begin{equation}
\label{mj}
m_j = \min\bigg\{m \ge k_j: \sum_{k=k_j}^{m_j-1}\tau_k \ge T\bigg\}, \quad j=1,2,\dots.
\end{equation}
For all sufficiently large $j$ we  have $k_j \le m_j \le \ell_j$ and $w(z^{m_j}) > v$, which is a contradiction. Therefore,
$\underline{w} = \overline{w}$ and the sequence $\big\{w(z^k)\big\}$ has a limit.

Suppose a subsequence $\{z^{k_j}\}$ is convergent to a point $\bar{z}\notin \Zb^*$. Again, we
set $s_j = t_{k_j}$ in \eqref{Z-shifted}, consider the interval $[0,T]$ with $T>0$, and conclude that
a  sub-subsequence of $\{Z_j(\Cdot)\}$ exists, which is uniformly convergent to a solution $Z_\infty(\Cdot)$ of the
differential inclusion \eqref{diff-inc-beta}. Then the points $\{z^{m_j}\}$ are convergent to $Z_\infty(T)$ on this sub-subsequence,
where $m_j$ are defined as in \eqref{mj}.
As $Z_\infty(0)=\bar{z}\notin \Zb^*$, we obtain $Z_\infty(T)< Z_\infty(0)$. This would mean that a sub-subsequence
of $\big\{w(z^{m_j})\big\}$ has the limit $w(Z_\infty(T))< w(\bar{z})$, which contradicts the convergence of the sequence
$\big\{w(z^{k})\big\}$.
\end{proof}

It follows from the theorem that $\big\{Q(\beta^k,\theta^k)\big\} \to 0$ almost surely, the
sequence $\big\{G(\beta^k)\big\}$ is convergent as well,
and every accumulation point of the sequence  $\big\{\beta^k,\theta^k\big\}$ is in $\Zb^*$.

\begin{corollary}
Under Assumptions \textup{(A1)-(A8)}, the  method \eqref{method} is convergent if $\gamma > 2\widebar{L}_f^2 M {L_{\nabla g}}$.
\end{corollary}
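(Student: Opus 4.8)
The plan is to deduce this from Theorem~\ref{t:full-convergence}: it suffices to show that for every $\gamma>2\widebar{L}_f^2 M{L_{\nabla g}}$ there is a choice of the free parameters $\alpha>0$ and $\lambda\ge 0$ for which both inequalities in \eqref{negative-der} hold, since then Theorem~\ref{t:full-convergence} yields precisely the asserted convergence. Abbreviate $c\triangleq 2\widebar{L}_\varPsi^2 M{L_{\nabla g}}$, the threshold in the first half of \eqref{negative-der}; the task is then to find $\lambda>c$ and $\alpha>0$ with
\[
\gamma \;>\; \frac{\widebar{L}_f^2 M\big((\alpha+1){L_{\nabla g}}+\alpha\lambda\big)^2}{2\alpha(\lambda-c)}.
\]
In other words, the entire content of the corollary is the claim that the infimum of the right-hand side over $\alpha>0$ and $\lambda>c$ equals $2\widebar{L}_f^2 M{L_{\nabla g}}$.

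The key step is a two-stage minimization, which is entirely elementary. Fixing $\lambda$ and rewriting the factor $(\alpha+1){L_{\nabla g}}+\alpha\lambda$ as ${L_{\nabla g}}+\alpha({L_{\nabla g}}+\lambda)$, the elementary bound $a^2/\alpha+2ab+\alpha b^2\ge 4ab$ (with equality at $\alpha=a/b$) gives
\[
\min_{\alpha>0}\frac{\big({L_{\nabla g}}+\alpha({L_{\nabla g}}+\lambda)\big)^2}{\alpha}\;=\;4{L_{\nabla g}}({L_{\nabla g}}+\lambda),\qquad\text{attained at}\quad\alpha^\star=\frac{{L_{\nabla g}}}{{L_{\nabla g}}+\lambda}.
\]
Substituting, the minimized bound on $\gamma$ becomes $2\widebar{L}_f^2 M{L_{\nabla g}}({L_{\nabla g}}+\lambda)/(\lambda-c)$. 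Now $({L_{\nabla g}}+\lambda)/(\lambda-c)=1+({L_{\nabla g}}+c)/(\lambda-c)$ is strictly decreasing in $\lambda$ on $(c,\infty)$ and tends to $1$ as $\lambda\to\infty$, so the infimum of this quantity over $\lambda>c$ is exactly $2\widebar{L}_f^2 M{L_{\nabla g}}$, and it is not attained.

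Consequently, for any $\gamma>2\widebar{L}_f^2 M{L_{\nabla g}}$ one fixes $\lambda$ large enough that $\lambda>c$ and $2\widebar{L}_f^2 M{L_{\nabla g}}({L_{\nabla g}}+\lambda)/(\lambda-c)<\gamma$, and then sets $\alpha={L_{\nabla g}}/({L_{\nabla g}}+\lambda)$; with these values both inequalities in \eqref{negative-der} hold, and Theorem~\ref{t:full-convergence} applies. I do not anticipate any genuine obstacle here: the whole argument is a short optimization. The only point requiring a little care is that the optimal $\lambda$ is effectively $+\infty$, so one must choose $\lambda$ finite but sufficiently large and exploit the strictness of the hypothesis $\gamma>2\widebar{L}_f^2 M{L_{\nabla g}}$; the degenerate case ${L_{\nabla g}}=0$ is trivial, because the right-hand side of the bound then tends to $0$ as $\alpha\downarrow 0$.
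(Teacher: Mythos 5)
Your proposal is correct and follows essentially the same route as the paper: the paper likewise substitutes $\alpha = L_{\nabla g}/(L_{\nabla g}+\lambda)$ into \eqref{negative-der}, obtains the sufficient condition $\gamma > 2\widebar{L}_f^2 M L_{\nabla g}(L_{\nabla g}+\lambda)/(\lambda - 2\widebar{L}_\varPsi^2 M L_{\nabla g})$, and lets $\lambda\to\infty$ so the threshold tends (without being attained) to $2\widebar{L}_f^2 M L_{\nabla g}$. Your only additions are the explicit verification that this $\alpha$ minimizes the bound and the remark on the degenerate case $L_{\nabla g}=0$, both of which are consistent with the paper's argument.
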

\begin{proof}
Both $\alpha>0$ and $\lambda>0$ are arbitrary parameters of the Lyapunov function. Choosing $\alpha = {L_{\nabla g}} / ({L_{\nabla g}} + \lambda)$, we see from \eqref{negative-der} that it is sufficient that
$\gamma>\frac{2 \widebar{L}_f^2 M {L_{\nabla g}} ({L_{\nabla g}} + \lambda)}{\lambda - 2 \widebar{L}_\varPsi^2 M {L_{\nabla g}}}$. The parameter
$\lambda$ is not bounded from above. If we let $\lambda \to \infty$ and $\alpha \downarrow 0$ then the lower bound for $\gamma$ will converge to  $2\widebar{L}_f^2 M {L_{\nabla g}}$. Thus, for any $\lambda$ greater than this lower bound, the corresponding $\alpha$ and $\lambda$ can be found, such that \eqref{negative-der} is satisfied.
\end{proof}

\section{Numerical Illustration}
\label{s:5}

We revisit Example \ref{e:RL} in \S \ref{s:1}. We make further assumptions about the MDP $\{\Sc, \Ac, \Pb, r, \gamma \}$:
\begin{tightlist}{ii}
\item
The reward function  $r(s, a, s')$ is invariant with respect to $s' \in \Sc$; we shall write it as $r(s,a)$, with $r: \Sc \times \Ac \to \Rb$;
\item
The MDP $\{\Sc, \Ac, \Pb, r, \gamma \}$ is \emph{linear} with a feature map $\phi: \Sc \times \Ac \to \Rb^n$: there exist $n$ \emph{unknown} signed measures $\mub=(\mu^{(1)},\dots,\mu^{(n)})$ over $\Sc$ and an \emph{unknown} vector $\nub \in \Rb^n$, such that for any $(s, a) \in \Sc \times \Ac$, we have:
\begin{equation}
\label{a:LMDP}
\Pb(\Cdot|s, a) = \langle \phi(s, a), \mub(\Cdot) \rangle, \qquad r(s, a) = \langle \phi(s, a), \nub \rangle.
\end{equation}
\end{tightlist}
The Reader is referred to \cite{melo2007q,parr2008analysis,yang2019sample,jin2020provably} for the theory of linear MDPs and
relevant recent research in this area.

If we select the parametric family $\varPhi$ as the linear architecture model, $\varPhi(s, a, \beta) = \phi(s, a)^T \beta$, then for the function $f(\Cdot,\Cdot,\Cdot)$ defined in \eqref{f-g-MDP}, we have
\begin{equation}
\label{F-linear}
\begin{aligned}
F(X, \beta) &= \Eb\big[f(X, Y, \beta)\big|X\big]\\
&= \varPhi(s, a, \beta) - r(s, a) - \gamma \Eb\big[\varPhi(s', a', \beta)\big|s, a\big] \\
&= \phi(s, a)^T \beta - \phi(s, a)^T \nub - \gamma \sum_{s'} \Pb(s'|s, a) \Eb_{a' \sim \pi(s')}\big[\phi(s', a')^T \beta\big] \\
&= \phi(s, a)^T \beta - \phi(s, a)^T \nub - \gamma \sum_{s'} \phi(s, a)^T \mu(s') \sum_{a'} \pi(s')(a') \phi(s', a')^T \beta \\
&= \phi(s, a)^T \Big(\beta - \nub - \gamma \sum_{s'} \mu(s') \sum_{a'} \pi(s')(a') \phi(s', a')^T \beta\Big).
\end{aligned}
\end{equation}
The  double sum $\sum_{s'} \mu(s') \sum_{a'} \pi(s')(a') \phi(s', a')^T$
is constant as long as the policy $\pi$ is given, and thus the expression in the parentheses
in the last row of \eqref{F-linear} is a linear function of $\beta$. Despite its complication with unknown parameters and measures involved, the function \eqref{F-linear} can be treated as a linear combination of the features $\phi(s, a)$. Therefore, it is sufficient to have $\varPsi(X, \theta) = \phi(s, a)^T \theta$. We do not need to calculate how is $\theta$ linked to $\beta$; we only make sure that the model exists and is sufficiently rich.

We focus on a linear MDP with $|\Sc|=100$ states, $|\Ac|=50$ actions and discount $\gamma = 0.95$. We set the convex set $\Bb := \{\beta: \|\beta\| \le 10\}$ and $\Theta_R := \{\theta: \|\theta\| \le 1000\}$. In this problem, we use the Huber loss for the function $g(\Cdot)$, with the parameter {$\delta = 1$}:
\[
g(u) = \begin{cases}
    \frac{1}{2} \|u\|^2,& \text{if } \|u\| \le \delta\\
    \delta \|u\| - \frac{1}{2} \delta^2,              & \text{if } \|u\| > \delta.
\end{cases}
\]
It satisfies our assumptions for $g(\cdot)$.

We randomly generated $\phi, \mu, \nu$ in \eqref{a:LMDP} and trained the $(\beta, \theta)$ pair by the method \eqref{method}. {We selected $\tau_k = \frac{1}{1+k/1000}$ and $\gamma = 100$ and executed 5000 iterations. }

It is difficult to directly observe the progress of the method because it minimizes a function
whose values cannot be observed; we do not even have their unbiased statistical estimates.
The tracking error $Q$ cannot be observed as well.
Therefore, \textit{for the sole purpose of illustration}, we generated a
test sample $\Dc = \{(X^i, Y^i)\}_{1\le i \le |\Dc|}$ with $|\Dc| = 1000$ independent observations from the joint distribution, and we used it to evaluate the quality of the
successive iterates.

The first row in Figure \ref{fig:LMDP} depicts three quantities that can be directly observed at each iteration of the training process {$g(f(X^k, Y^k, \beta^k))$, $g(\varPsi(X^k, \theta^k))$}
%$\text{gf} \triangleq g(f(X^k, Y^k, \beta^k))$, $\text{gPsi} \triangleq g(\varPsi(X^k, \theta^k))$,
and a biased estimate of the tracking error $Q^k \triangleq \break \frac{1}{2} \|f(X^k, Y^k, \beta^k) - \varPsi(X^k, \theta^k)\|^2$. The last two quantities have some downward trend, but it is very hard to judge the efficacy of the method from these observations.

Therefore, during the training process, given the current $(\beta, \theta) = (\beta^k, \theta^k)$, we calculated the following test metrics:
{
\begin{equation*}
\begin{aligned}
&\Eb^{\Dc}[g(f(X, Y, \beta^k))] \triangleq \frac{1}{|\Dc|} \sum_{(X^i, Y^i) \in \Dc} g(f(X^i, Y^i, \beta^k)),\\
&\Eb^{\Dc}[g(\varPsi(X, \theta^k))]  \triangleq \frac{1}{|\Dc|} \sum_{(X^i, Y^i) \in \Dc} g(\varPsi(X^i, \theta^k)),\\
&\Eb^{\Dc}[Q(\beta^k, \theta^k)]  \triangleq \frac{1}{2|\Dc|} \sum_{(X^i, Y^i) \in \Dc} \|f(X^i, Y^i, \beta^k) - \varPsi(X^i, \theta^k)\|^2.
\end{aligned}
\end{equation*}
}
To measure the optimality condition, we also estimated the average directions:
{
\begin{equation*}
\begin{aligned}
&\|\Eb^{\Dc}[\tilde{d}_{\beta^k}]\| \triangleq   \Big\|\frac{1}{|\Dc|} \sum_{i=1}^{|\Dc|} \Tilde{d}^i_\beta |_{\beta=\beta^k} \Big\|,\\
&\|\Eb^{\Dc}[\tilde{d}_{\theta^k}]\| \triangleq   \Big\|\frac{1}{|\Dc|} \sum_{i=1}^{|\Dc|} \Tilde{d}^i_\theta |_{\theta=\theta^k} \Big\|.
\end{aligned}
\end{equation*}
}
All subfigures, except {``Test $\Eb^{\Dc}[g(f(X, Y, \beta))]$''}, are shown in the logarithmic scale on the vertical axis.
We can see that { $\Eb^{\Dc}[g(\varPsi(X, \theta^k))]$} is converging to its minimum for the Huber loss $g(\cdot)$, which is zero due to the linearity of the model; { $\Eb^{\Dc}[Q(\beta^k, \theta^k)]$ and $\|\Eb^{\Dc}[\tilde{d}_{\theta^k}]\|$} are converging to $0$ as well. These observations indicate that $\varPsi(\Cdot,\theta)$ tracks $F(\Cdot,\beta)$ closely and the original composite loss function is indeed minimized.

\begin{figure}[t!]\captionsetup[sub]{font=footnotesize}
    \centering
    \begin{subfigure}[]{0.32\textwidth}
        \centering
        \includegraphics[width=4.1cm]{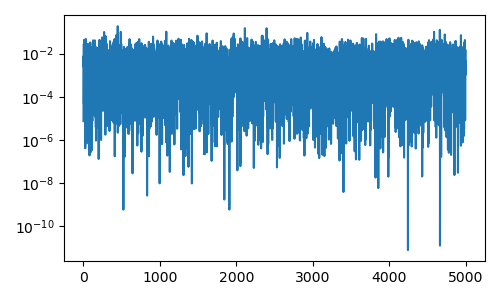}
        \caption{Train $g(f(X^k, Y^k, \beta^k))$}
    \end{subfigure}%
    \begin{subfigure}[]{0.32\textwidth}
        \centering
        \includegraphics[width=4.1cm]{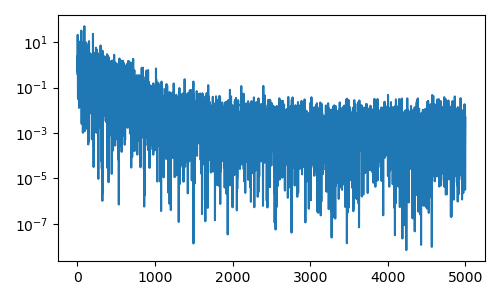}
        \caption{Train $g(\varPsi(X^k, \theta^k))$}
    \end{subfigure}
    \begin{subfigure}[]{0.32\textwidth}
        \centering
        \includegraphics[width=4.1cm]{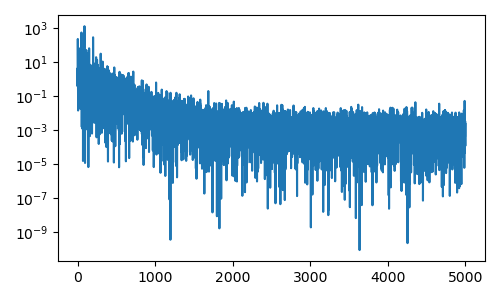}
        \caption{Train $Q^k$}
    \end{subfigure}

    \begin{subfigure}[]{0.32\textwidth}
        \centering
        \includegraphics[width=4.1cm]{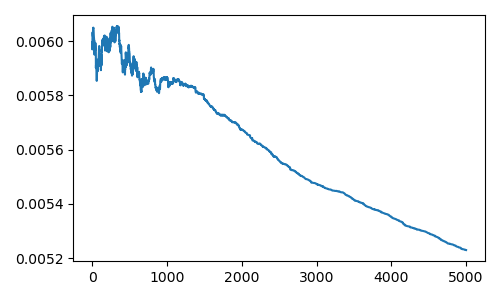}
        \caption{Test $\Eb^{\Dc}[g(f(X, Y, \beta^k))]$}
    \end{subfigure}%
    \begin{subfigure}[]{0.32\textwidth}
        \centering
        \includegraphics[width=4.1cm]{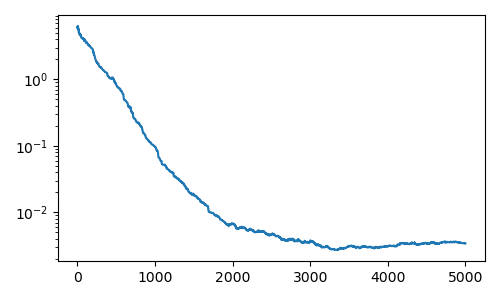}
        \caption{Test $\Eb^{\Dc}[g(\varPsi(X, \theta^k))]$}
    \end{subfigure}
    \begin{subfigure}[]{0.32\textwidth}
        \centering
        \includegraphics[width=4.1cm]{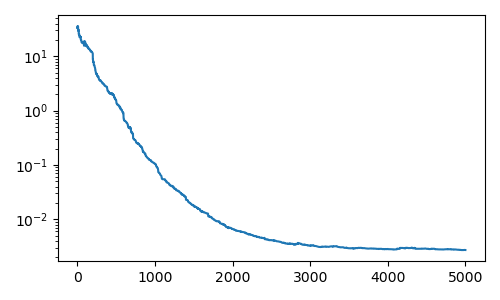}
        \caption{Test $\Eb^{\Dc}[Q(\beta^k, \theta^k)]$}
    \end{subfigure}

    \begin{subfigure}[]{0.32\textwidth}
        \centering
        \includegraphics[width=4.1cm]{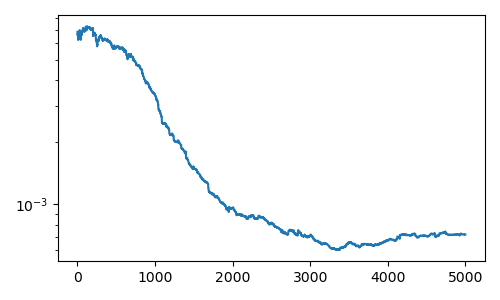}
        \caption{Test $\|\Eb^{\Dc}[\tilde{d}_{\beta^k}]\|$}
    \end{subfigure}%
    \begin{subfigure}[]{0.32\textwidth}
        \centering
        \includegraphics[width=4.1cm]{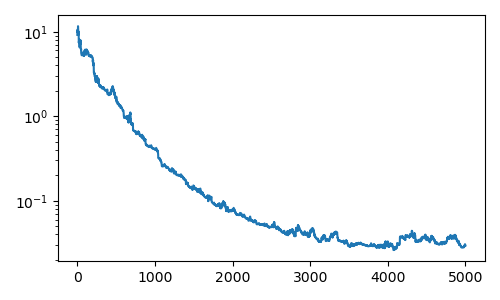}
        \caption{Test $\|\Eb^{\Dc}[\tilde{d}_{\theta^k}]\|$}
    \end{subfigure}

    \caption{The progress of the method on the training set (the top three subgraphs) and on the test set (the bottom five subgraphs).}
\label{fig:LMDP}
\end{figure}

\section{Conclusions and Future Research}

We consider conditional stochastic optimization problems, where two major difficulties
are present:
\begin{tightlist}{ii}
    \item No unbiased estimates of the function that is minimized or its subgradients are available because sampling from the conditional distribution is not possible;
    \item The function is differentiable in the generalized sense only and may be not convex.\vspace{1ex}
\end{tightlist}
For these problems, we propose a new stochastic method, which involves an auxiliary functional model, whose purpose is to track the ``invisible'' conditional expectation in the functional space. The algorithm is not a stochastic subgradient method, because its directions, even in the
expected value sense, are not subgradients of any function.

The convergence analysis of the method involves several novel ideas: a stochastic version of
the Bregman distance, and a stochastic {\L}ojasiewicz condition, and follows the differential inclusion method with a specially designed Lyapunov function.

We hope the new ideas we introduce here will find their way to the theory and methods for other stochastic optimization problems involving compositions of functional models.

{  Many different research questions may be pursued on the foundation
provided by the present paper. Versions of other, more complex stochastic subgradient algorithms
for nonconvex and nonsmooth problems may be developed, such as mirror descent \cite{ghadimi2013stochastic,davis2019stochastic}, dual averaging \cite{Rusz2019}, \textit{etc.}; each of them deserves a separate study.
{ A major challenge is
\emph{contextual bi-level optimization}, as in \cite{hu2024contextual}. Our conditional model is a special case of it, with the
conditional expectation $\Eb [f(X,Y,\beta) \,|\, X]$ interpreted as the minimizer
$z^*(X,\beta)$ in the second-level problem:
\[
\min_{z} \;\Eb\big[ \big(f(X,Y,\beta) - z\big)^2 \,\big|\, X \big].
\]
We do not know whether it is possible to replace the least-squares objective in this problem with a broader class of functions and still be able to solve the
upper-level problem:
\[
\min_{\beta \in {\Bb}} G(\beta) \triangleq \Eb \Big\{g\big(X,\beta,z^*(X,\beta)\big)\Big\}.
\]
Such a generalization would open the door to many applications discussed in \cite{hu2024contextual}.
}

A natural question arises how to deal with the \textit{misspecified setting}, when the
parametric family of functional models $\big\{ \varPsi(X, \theta)\big\}_{\theta\in \Theta}$ is not rich enough to reproduce the conditional expectations $\big\{ F(X,\beta)\big\}_{\beta\in \Bb}$. It is a major research challenge. In the machine learning literature, some special cases have been
successfully analyzed in the linear function approximation case \cite{ghosh2017misspecified,jin2020provably}.
Our situation is more complicated, and we plan to address this challenge in our future research. A positive aspect is that the method \eqref{method}--\eqref{dzk} remains well-defined in this case and appears to produce good approximations of the solution.
However, without thorough analysis, it is too early to speculate about its virtues.
}

\bibliographystyle{plain}

\begin{thebibliography}{10}

\bibitem{aubin2012differential}
{\sc J.-P. Aubin and A.~Cellina}, {\em Differential inclusions: set-valued maps
  and viability theory}, vol.~264, Springer Science \& Business Media, 2012.

\bibitem{balasubramanian2022stochastic}
{\sc K.~Balasubramanian, S.~Ghadimi, and A.~Nguyen}, {\em Stochastic multilevel
  composition optimization algorithms with level-independent convergence
  rates}, SIAM Journal on Optimization, 32 (2022), pp.~519--544.

\bibitem{bauschke2001joint}
{\sc H.~H. Bauschke and J.~M. Borwein}, {\em Joint and separate convexity of
  the {B}regman distance}, in Studies in Computational Mathematics, vol.~8,
  Elsevier, 2001, pp.~23--36.

\bibitem{belen1974iterative}
{\sc V.~Z. Belenkii, V.~A. Volkonskii, S.~A. Ivankov, A.~B. Pomaranski, and
  A.~D. Shapiro}, {\em Iterative Methods in Game Theory and Programming},
  Nauka, Moscow, 1974.

\bibitem{benaim2005stochastic}
{\sc M.~Bena{\"\i}m, J.~Hofbauer, and S.~Sorin}, {\em Stochastic approximations
  and differential inclusions}, SIAM Journal on Control and Optimization, 44
  (2005), pp.~328--348.

\bibitem{benaim2006stochastic}
{\sc M.~Bena{\"\i}m, J.~Hofbauer, and S.~Sorin}, {\em Stochastic approximations
  and differential inclusions, part ii: Applications}, Mathematics of
  Operations Research, 31 (2006), pp.~673--695.

\bibitem{bolte2010characterizations}
{\sc J.~Bolte, A.~Daniilidis, O.~Ley, and L.~Mazet}, {\em Characterizations of
  {{\L}}ojasiewicz inequalities: subgradient flows, talweg, convexity},
  Transactions of the American Mathematical Society, 362 (2010),
  pp.~3319--3363.

\bibitem{bolte2023subgradient}
{\sc J.~Bolte, T.~Le, and E.~Pauwels}, {\em Subgradient sampling for nonsmooth
  nonconvex minimization}, SIAM Journal on Optimization, 33 (2023),
  pp.~2542--2569.

\bibitem{borkar2009stochastic}
{\sc V.~S. Borkar}, {\em Stochastic Approximation: a Dynamical Systems
  Viewpoint}, Springer, New York, 2009.

\bibitem{Bradtke1996}
{\sc S.~J. Bradtke and A.~G. Barto}, {\em Linear least-squares algorithms for
  temporal difference learning}, Machine Learning, 22 (1996), pp.~33--57.

\bibitem{bregman1967relaxation}
{\sc L.~M. Bregman}, {\em The relaxation method of finding the common point of
  convex sets and its application to the solution of problems in convex
  programming}, USSR Computational Mathematics and Mathematical Physics, 7
  (1967), pp.~200--217.

\bibitem{brezis1971monotonicity}
{\sc H.~Br{\'e}zis}, {\em Monotonicity methods in {H}ilbert spaces and some
  applications to nonlinear partial differential equations}, in Contributions
  to Nonlinear Functional Analysis, Elsevier, 1971, pp.~101--156.

\bibitem{cai2019neural}
{\sc Q.~Cai, Z.~Yang, J.~D. Lee, and Z.~Wang}, {\em Neural temporal-difference
  learning converges to global optima}, Advances in Neural Information
  Processing Systems, 32 (2019).

\bibitem{chen2021closing}
{\sc T.~Chen, Y.~Sun, and W.~Yin}, {\em Closing the gap: Tighter analysis of
  alternating stochastic gradient methods for bilevel problems}, Advances in
  Neural Information Processing Systems, 34 (2021), pp.~25294--25307.

\bibitem{dai2017learning}
{\sc B.~Dai, N.~He, Y.~Pan, B.~Boots, and L.~Song}, {\em Learning from
  conditional distributions via dual embeddings}, in Artificial Intelligence
  and Statistics, PMLR, 2017, pp.~1458--1467.

\bibitem{dann2014policy}
{\sc C.~Dann, G.~Neumann, J.~Peters, et~al.}, {\em Policy evaluation with
  temporal differences: A survey and comparison}, Journal of Machine Learning
  Research, 15 (2014), pp.~809--883.

\bibitem{davis2019stochastic}
{\sc D.~Davis and D.~Drusvyatskiy}, {\em Stochastic model-based minimization of
  weakly convex functions}, SIAM Journal on Optimization, 29 (2019),
  pp.~207--239.

\bibitem{drusvyatskiy2015curves}
{\sc D.~Drusvyatskiy, A.~D. Ioffe, and A.~S. Lewis}, {\em Curves of descent},
  SIAM Journal on Control and Optimization, 53 (2015), pp.~114--138.

\bibitem{duchi2018stochastic}
{\sc J.~C. Duchi and F.~Ruan}, {\em Stochastic methods for composite and weakly
  convex optimization problems}, SIAM Journal on Optimization, 28 (2018),
  pp.~3229--3259.

\bibitem{ermol1971general}
{\sc Y.~M. Ermoliev}, {\em On a general problem of stochastic programming},
  Cybernetics, 7 (1971), pp.~445--449.

\bibitem{ermoliev1976methods}
{\sc Y.~M. Ermoliev}, {\em Methods of Stochastic Programming}, Nauka, Moscow,
  1976.

\bibitem{feng2020high}
{\sc Q.~Feng, B.~Letham, H.~Mao, and E.~Bakshy}, {\em High-dimensional
  contextual policy search with unknown context rewards using {B}ayesian
  optimization}, Advances in Neural Information Processing Systems, 33 (2020),
  pp.~22032--22044.

\bibitem{ghadimi2013stochastic}
{\sc S.~Ghadimi and G.~Lan}, {\em Stochastic first-and zeroth-order methods for
  nonconvex stochastic programming}, SIAM Journal on Optimization, 23 (2013),
  pp.~2341--2368.

\bibitem{ghadimi2018single}
{\sc S.~Ghadimi, A.~Ruszczy\'nski, and M.~Wang}, {\em A single timescale
  stochastic approximation method for nested stochastic optimization}, SIAM
  Journal on Optimization, 30 (2020), pp.~960--979.

\bibitem{ghosh2017misspecified}
{\sc A.~Ghosh, S.~R. Chowdhury, and A.~Gopalan}, {\em Misspecified linear
  bandits}, in Proceedings of the AAAI Conference on Artificial Intelligence,
  vol.~31, 2017.

\bibitem{goda2023constructing}
{\sc T.~Goda and W.~Kitade}, {\em Constructing unbiased gradient estimators
  with finite variance for conditional stochastic optimization}, Mathematics
  and Computers in Simulation, 204 (2023), pp.~743--763.

\bibitem{hao2020adaptive}
{\sc B.~Hao, T.~Lattimore, and C.~Szepesvari}, {\em Adaptive exploration in
  linear contextual bandits}, in International Conference on Artificial
  Intelligence and Statistics, PMLR, 2020, pp.~3536--3545.

\bibitem{he2024debiasing}
{\sc L.~He and S.~Kasiviswanathan}, {\em Debiasing conditional stochastic
  optimization}, Advances in Neural Information Processing Systems, 36 (2024).

\bibitem{hu2020sample}
{\sc Y.~Hu, X.~Chen, and N.~He}, {\em Sample complexity of sample average
  approximation for conditional stochastic optimization}, SIAM Journal on
  Optimization, 30 (2020), pp.~2103--2133.

\bibitem{hu2021bias}
{\sc Y.~Hu, X.~Chen, and N.~He}, {\em On the bias-variance-cost tradeoff of
  stochastic optimization}, Advances in Neural Information Processing Systems,
  34 (2021), pp.~22119--22131.

\bibitem{hu2024contextual}
{\sc Y.~Hu, J.~Wang, Y.~Xie, A.~Krause, and D.~Kuhn}, {\em Contextual
  stochastic bilevel optimization}, Advances in Neural Information Processing
  Systems, 36 (2024).

\bibitem{hu2020biased}
{\sc Y.~Hu, S.~Zhang, X.~Chen, and N.~He}, {\em Biased stochastic first-order
  methods for conditional stochastic optimization and applications in meta
  learning}, Advances in Neural Information Processing Systems, 33 (2020),
  pp.~2759--2770.

\bibitem{jaskowski2012uplift}
{\sc M.~Ja\'skowski and S.~Jaroszewicz}, {\em Uplift modeling for clinical
  trial data}, in ICML Workshop on Clinical Data Analysis, vol.~46, 2012,
  pp.~79--95.

\bibitem{jin2020provably}
{\sc C.~Jin, Z.~Yang, Z.~Wang, and M.~I. Jordan}, {\em Provably efficient
  reinforcement learning with linear function approximation}, in Conference on
  Learning Theory, Proceedings of Machine Learning Research, 2020,
  pp.~2137--2143.

\bibitem{kannan2022data}
{\sc R.~Kannan, G.~Bayraksan, and J.~R. Luedtke}, {\em Data-driven sample
  average approximation with covariate information}, arXiv preprint
  arXiv:2207.13554,  (2022).

\bibitem{kushner2012stochastic}
{\sc H.~J. Kushner and D.~S. Clark}, {\em Stochastic Approximation Methods for
  Constrained and Unconstrained Systems}, Springer, New York, 1978.

\bibitem{harold1997stochastic}
{\sc H.~J. Kushner and G.~Yin}, {\em Stochastic approximation algorithms and
  applications}, Application of Mathematics, 35 (1997).

\bibitem{ljung1977analysis}
{\sc L.~Ljung}, {\em Analysis of recursive stochastic algorithms}, IEEE
  Transactions on Automatic Control, 22 (1977), pp.~551--575.

\bibitem{lojasiewicz1963propriete}
{\sc S.~{\L}ojasiewicz}, {\em Une propri{\'e}t{\'e} topologique des
  sous-ensembles analytiques r{\'e}els}, Les {\'e}quations aux d{\'e}riv{\'e}es
  partielles, 117 (1963), pp.~87--89.

\bibitem{majewski2018analysis}
{\sc S.~Majewski, B.~Miasojedow, and E.~Moulines}, {\em Analysis of nonsmooth
  stochastic approximation: the differential inclusion approach}, arXiv
  preprint arXiv:1805.01916,  (2018).

\bibitem{melo2007q}
{\sc F.~S. Melo and M.~I. Ribeiro}, {\em Q-learning with linear function
  approximation}, in International Conference on Computational Learning Theory,
  Springer, 2007, pp.~308--322.

\bibitem{mikhalevich1987nonconvex}
{\sc V.~S. Mikhalevich, A.~M. Gupal, and V.~I. Norkin}, {\em Nonconvex
  Optimization Methods}, Nauka, Moscow, 1987.

\bibitem{norkin1980generalized}
{\sc V.~I. Norkin}, {\em Generalized-differentiable functions}, Cybernetics and
  Systems Analysis, 16 (1980), pp.~10--12.

\bibitem{nurminskii1972convergence}
{\sc E.~Nurminskii}, {\em Convergence conditions for nonlinear programming
  algorithms}, Cybernetics, 8 (1972), pp.~959--962.

\bibitem{parr2008analysis}
{\sc R.~Parr, L.~Li, G.~Taylor, C.~Painter-Wakefield, and M.~L. Littman}, {\em
  An analysis of linear models, linear value-function approximation, and
  feature selection for reinforcement learning}, in Proceedings of the 25th
  International Conference on Machine Learning, 2008, pp.~752--759.

\bibitem{Rusz2019}
{\sc A.~Ruszczy\'nski}, {\em Convergence of a stochastic subgradient method
  with averaging for nonsmooth nonconvex constrained optimization},
  Optimization Letters, 14 (2020), pp.~1615--1625.

\bibitem{ruszczynski2021stochastic}
{\sc A.~Ruszczy{\'n}ski}, {\em A stochastic subgradient method for nonsmooth
  nonconvex multilevel composition optimization}, SIAM Journal on Control and
  Optimization, 59 (2021), pp.~2301--2320.

\bibitem{sadana2023survey}
{\sc U.~Sadana, A.~Chenreddy, E.~Delage, A.~Forel, E.~Frejinger, and T.~Vidal},
  {\em A survey of contextual optimization methods for decision making under
  uncertainty}, arXiv preprint arXiv:2306.10374,  (2023).

\bibitem{schulman2015high}
{\sc J.~Schulman, P.~Moritz, S.~Levine, M.~Jordan, and P.~Abbeel}, {\em
  High-dimensional continuous control using generalized advantage estimation},
  arXiv preprint arXiv:1506.02438,  (2015).

\bibitem{singh2019kernel}
{\sc R.~Singh, M.~Sahani, and A.~Gretton}, {\em Kernel instrumental variable
  regression}, Advances in Neural Information Processing Systems, 32 (2019).

\bibitem{Tsitsiklis1997}
{\sc J.~N. Tsitsiklis and V.~Roy}, {\em An analysis of temporal-difference
  learning with function approximation}, IEEE Transactions on Automatic
  Control, 42 (1997), pp.~674 -- 690.

\bibitem{volkonskii1970convergence}
{\sc V.~A. Volkonskii and S.~A. Ivankov}, {\em Convergence of iterative
  processes of finding the equilibrium point in the presence of random errors,
  and the estimation of its rate}, Siberian Mathematical Journal, 11 (1970),
  pp.~582--591.

\bibitem{wang2017stochastic}
{\sc M.~Wang, E.~X. Fang, and B.~Liu}, {\em Stochastic compositional gradient
  descent: Algorithms for minimizing compositions of expected-value functions},
  Mathematical Programming, 161 (2017), pp.~419--449.

\bibitem{WaLiFa17}
{\sc M.~Wang, J.~Liu, and E.~X. Fang}, {\em Accelerating stochastic composition
  optimization}, Journal of Machine Learning Research, 18 (2017), pp.~1--23.

\bibitem{yamane2018uplift}
{\sc I.~Yamane, F.~Yger, J.~Atif, and M.~Sugiyama}, {\em Uplift modeling from
  separate labels}, Advances in Neural Information Processing Systems, 31
  (2018).

\bibitem{yang2019sample}
{\sc L.~Yang and M.~Wang}, {\em Sample-optimal parametric {Q}-learning using
  linearly additive features}, in International Conference on Machine Learning,
  PMLR, 2019, pp.~6995--7004.

\bibitem{zhou2020neural}
{\sc D.~Zhou, L.~Li, and Q.~Gu}, {\em Neural contextual bandits with
  {UCB}-based exploration}, in International Conference on Machine Learning,
  PMLR, 2020, pp.~11492--11502.

\end{thebibliography}

\appendix

\section{Generalized differentiability of functions}

Norkin \cite{norkin1980generalized} introduced the following class of functions.
\begin{definition}
\label{d:Norkin}
A function $f:\Rb^n\to\Rb$ is  \emph{differentiable in a generalized sense at a point} $x\in \Rb^n$,
if an open set $U\subset \Rb^n$ containing $x$,
and a nonempty, convex, compact valued, and upper semicontinuous multifunction
$\hat{\partial}f: U \rightrightarrows \Rb^n$ exist, such that for all $y\in U$ and all $g \in \hat{\partial}f(y)$ the following equation is true:
\[
f(y) = f(x) + \langle g(y), y-x \rangle + o(x,y,g),
\]
with
\[
\lim_{y\to x} \sup_{g\in \hat{\partial}f(y)} \frac{o(x,y,g)}{\|y-x\|}=0.
\]
The set $\hat{\partial}f(y)$ is  the \emph{generalized subdifferential} of $f$ at $y$.
If a function is differentiable in a generalized sense at every  $x \in \Rb^n$ with the same generalized subdifferential mapping $\hat{\partial}f:\Rb^n\rightrightarrows \Rb^n$, we call it \emph{differentiable in a generalized sense}.

A function $f:\Rb^n\to\Rb^m$ is  differentiable in a generalized sense, if each of its component functions, $f_i:\Rb^n\to\Rb$, $i=1,\dots,m$, has this property.
\end{definition}

Compositions of generalized differentiable functions are crucial in our analysis.
\begin{theorem}
\label{t:composition}
If $h:\Rb^m \to \Rb$ and $f_i:\Rb^n\to \Rb$, $i=1,\dots,m$, are differentiable in a generalized sense, then the composition
$\varPsi(x) = h\big( f_1(x),\dots,f_m(x)\big)$
is differentiable in a generalized sense, and at any point $x\in \Rb^n$ we can define the generalized subdifferential of $\varPsi$ as follows:
\begin{multline}
\label{sub-comp}
\hat{\partial}\varPsi(x) = \text{\rm conv} \big\{ g\in \Rb^n: g = \begin{bmatrix} f_{1x} & \cdots & f_{mx}\end{bmatrix} h_f,\\ \text{ with } h_f\in \hat{\partial}{h}\big(f_1(x),\dots,f_m(x)\big) \text{ and } f_{jx}\in \hat{\partial}{f_j}(x),\ j=1,\dots,m\big\}.
\end{multline}
\end{theorem}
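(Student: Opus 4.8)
The plan is to check directly the two requirements of Definition~\ref{d:Norkin} for the composition $\varPsi$, with the multifunction \eqref{sub-comp} as the candidate generalized subdifferential: (i) $\hat{\partial}\varPsi$ is nonempty-, convex-, and compact-valued and upper semicontinuous; and (ii) at every point $x$, $\varPsi$ admits the affine expansion of Definition~\ref{d:Norkin} with a remainder that is uniformly $o(\|y-x\|)$ over $g\in\hat{\partial}\varPsi(y)$.

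For (i), nonemptiness and convexity are immediate because \eqref{sub-comp} is by construction a convex hull of a nonempty set. For compactness, I would use that generalized differentiable functions are locally Lipschitz, hence continuous, so $f(x)=(f_1(x),\dots,f_m(x))$ is well defined; the product $\hat{\partial}f_1(x)\times\cdots\times\hat{\partial}f_m(x)\times\hat{\partial}h(f(x))$ is compact, the bilinear map $(f_{1x},\dots,f_{mx},h_f)\mapsto\sum_{j=1}^m (h_f)_j\,f_{jx}$ is continuous, so its image is compact, and Carath\'eodory's theorem makes the convex hull of a compact set in $\Rb^n$ compact. Upper semicontinuity would follow by assembling standard facts about set-valued maps: $x\mapsto\hat{\partial}h(f(x))$ is upper semicontinuous as an upper semicontinuous compact-valued multifunction pre-composed with a continuous map; the bilinear product of locally bounded, upper semicontinuous, compact-valued multifunctions is again of this kind; and the convex hull of a locally bounded, upper semicontinuous, compact-valued multifunction is upper semicontinuous.

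The core of the proof is (ii). Fix $x$, set $u=f(x)$, and for $y$ near $x$ set $v=f(y)$. The generalized-differentiability expansions of $h$ at $u$ and of each $f_j$ at $x$ give, for every $h_f\in\hat{\partial}h(v)$ and every $f_{jy}\in\hat{\partial}f_j(y)$,
\[
h(v)-h(u)=\langle h_f,\,v-u\rangle+o_h,\qquad f_j(y)-f_j(x)=\langle f_{jy},\,y-x\rangle+o_j,
\]
where $\sup_{h_f}|o_h|/\|v-u\|\to0$ as $v\to u$ and $\sup_{f_{jy}}|o_j|/\|y-x\|\to0$ as $y\to x$. Substituting the second identity into $\langle h_f,v-u\rangle=\sum_j (h_f)_j\big(f_j(y)-f_j(x)\big)$ and putting $g'=\sum_j (h_f)_j\,f_{jy}$, which is a generator of $\hat{\partial}\varPsi(y)$ in the sense of \eqref{sub-comp}, yields
\[
\varPsi(y)-\varPsi(x)=\langle g',\,y-x\rangle+\sum_{j=1}^m (h_f)_j\,o_j+o_h.
\]
Every $g\in\hat{\partial}\varPsi(y)$ is a convex combination of such generators $g'$; taking the same convex combination of the displayed identity shows that $\varPsi(y)-\varPsi(x)-\langle g,\,y-x\rangle$ is a convex combination of terms of the form $\sum_j (h_f)_j\,o_j+o_h$, hence bounded in modulus by the supremum of $\big|\sum_j (h_f)_j\,o_j+o_h\big|$ over all admissible $h_f$ and $f_{jy}$. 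Dividing by $\|y-x\|$: the $\sum_j (h_f)_j\,o_j$ contribution is at most $\big(\max_j\sup_{h_f}|(h_f)_j|\big)\sum_j\sup_{f_{jy}}|o_j|/\|y-x\|$, which tends to $0$ by local boundedness of $\hat{\partial}h$ near $u$ together with the remainder condition for the $f_j$; the $o_h$ contribution equals $\big(\sup_{h_f}|o_h|/\|v-u\|\big)\cdot\|v-u\|/\|y-x\|$, whose first factor tends to $0$ because $v=f(y)\to u$ and whose second factor is bounded because $f$ is locally Lipschitz (and when $v=u$ one has $o_h=0$, so nothing needs to be estimated). Therefore $\sup_{g\in\hat{\partial}\varPsi(y)}|\varPsi(y)-\varPsi(x)-\langle g,\,y-x\rangle|/\|y-x\|\to0$ as $y\to x$, which is exactly the condition in Definition~\ref{d:Norkin}.

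I expect the main obstacle to be precisely this chaining of the two remainders: the error term of $h$ is measured against $\|v-u\|=\|f(y)-f(x)\|$ and must be converted into an error measured against $\|y-x\|$. This is where the local Lipschitz property of the class of generalized differentiable functions is essential, along with a careful handling of the degenerate case $f(y)=f(x)$; the remaining points (Carath\'eodory for compactness of the convex hull, and upper semicontinuity of products and convex hulls of multifunctions) are routine set-valued analysis.
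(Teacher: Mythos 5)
The paper does not actually prove Theorem~\ref{t:composition}: it is recalled in the appendix as a known result of Norkin (see \cite{norkin1980generalized} and \cite{mikhalevich1987nonconvex}), so there is no in-paper argument to compare against. Your reconstruction is correct and is essentially the classical proof. The two points that carry all the weight are handled properly: (a) the expansion of $h$ at $u=f(x)$ uses a subgradient taken at $v=f(y)$, exactly as Definition~\ref{d:Norkin} prescribes, so substituting the expansions of the $f_j$ produces a generator of $\hat{\partial}\varPsi(y)$ as defined by \eqref{sub-comp}, and convex combinations of generators inherit the same remainder bound; (b) the conversion of the $o_h$ term from an $o(\|v-u\|)$ into an $o(\|y-x\|)$ only requires calmness of $f$ at $x$, which follows directly from the expansions of the $f_j$ at $x$ together with local boundedness of their (u.s.c., compact-valued) subdifferentials — you invoke the stronger local Lipschitz property of the class, which is also a standard fact and equally sufficient. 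The structural claims you defer to ``routine set-valued analysis'' (local boundedness of u.s.c.\ compact-valued maps, upper semicontinuity of products and convex hulls, Carath\'eodory for compactness of the convex hull in $\Rb^n$) are indeed standard and correctly identified. I see no gap.
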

Even if we take $\hat{\partial}{h}(\Cdot)=\partial h(\Cdot)$ and $\hat{\partial}{f_j}(\Cdot)=\partial\! f_j(\Cdot)$, $j=1,\dots,m$, we may obtain
$\hat{\partial}\varPsi(\Cdot) \ne \partial \varPsi(\Cdot)$,
but $\hat{\partial}\varPsi$ defined above satisfies Definition \ref{d:Norkin}. %We still have the following property of the Clarke subdifferential.

For stochastic optimization, essential is the closure of the class functions differentiable in a generalized sense with respect to
expectation.
\begin{theorem}
\label{t:expected-generalized}
Suppose $(\varOmega,\Fc,P)$ is a probability space and a function $f:\Rb^n\times \varOmega \to \Rb$ is differentiable in a generalized sense with respect to $x$ for all $\omega\in \varOmega$, and integrable with respect to $\omega$ for all $x\in \Rb^n$. Let $\hat{\partial}f: \Rb^n \times \varOmega
\rightrightarrows \Rb^n$ be a multifunction, which is measurable with respect to $\omega$ for all $x\in \Rb^n$, and which is a generalized subdifferential mapping of $f(\Cdot,\omega)$ for all $\omega\in \varOmega$. If for every compact set $K\subset \Rb^n$ an integrable function
$L_K:\varOmega\to \Rb$ exists, such that
$\sup_{x\in K}\sup_{g\in \hat{\partial}f(x,\omega)}\|g\| \le L_K(\omega)$, $\omega \in \varOmega$,
then the function
\[
F(x) = \int_\varOmega f(x,\omega)\;P(d\omega),\quad x\in \Rb^n,
\]
is differentiable in a generalized sense, and the multifunction
\[
\hat{\partial}F(x) = \int_\varOmega \hat{\partial}f(x,\omega)\;P(d\omega),\quad x\in \Rb^n,
\]
is its generalized subdifferential mapping.
\end{theorem}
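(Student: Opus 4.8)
The plan is to take the candidate mapping $x\mapsto\hat\partial F(x)=\int_\varOmega\hat\partial f(x,\omega)\,P(d\omega)$ (the Aumann integral of the pointwise generalized subdifferentials) and check, one by one, the three requirements of Definition~\ref{d:Norkin}: that it is nonempty-, convex-, and compact-valued; that it is upper semicontinuous in $x$; and that it produces the correct first-order expansion of $F$. The engine throughout is the dominated convergence theorem — the hypothesis $\sup_{x\in K}\sup_{g\in\hat\partial f(x,\omega)}\|g\|\le L_K(\omega)\in L^1$ supplies the dominating functions — run on top of measurable-selection arguments that translate between elements of $\hat\partial F$ and measurable selections of $\hat\partial f$. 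Here I write $o(x,y,g,\omega)=f(y,\omega)-f(x,\omega)-\langle g,y-x\rangle$ for the remainder, now carrying the extra argument $\omega$.

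First I would settle the set-valued regularity. Fix $x$ and a compact neighbourhood $K\ni x$. Since $\hat\partial f(x,\cdot)$ is measurable with nonempty closed values, a measurable selection theorem produces a selection $s(\cdot)$, which is integrable because $\|s(\omega)\|\le L_K(\omega)$; hence $\int s\,dP\in\hat\partial F(x)$ and $\hat\partial F(x)\ne\emptyset$, and every $g\in\hat\partial F(x)$ satisfies $\|g\|\le\int L_K\,dP$. Convexity is immediate from pointwise convexity of the values of $\hat\partial f(x,\cdot)$, since a convex combination of integrable selections is again an integrable selection. Compactness then reduces to closedness of the Aumann integral of a measurable, integrably bounded, closed-convex-valued multifunction, a classical fact that I would invoke in the form of \cite[Lem.~21.7]{mikhalevich1987nonconvex}, the same auxiliary result already used in the proof of Lemma~\ref{l:Gamma}.

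Next I would prove upper semicontinuity of $x\mapsto\hat\partial F(x)$. Take $x_k\to x$, $g_k\in\hat\partial F(x_k)$ with $g_k\to g$, and write $g_k=\int s_k\,dP$ with $s_k(\omega)\in\hat\partial f(x_k,\omega)$; for $k$ large, $x_k\in K$ and $\|s_k(\omega)\|\le L_K(\omega)$. Let $\hat s_k(\omega)$ be a measurable selection of the projection of $s_k(\omega)$ onto the closed convex set $\hat\partial f(x,\omega)$. Upper semicontinuity together with compact-valuedness of $\hat\partial f(\cdot,\omega)$ forces $\|s_k(\omega)-\hat s_k(\omega)\|=\dist\!\big(s_k(\omega),\hat\partial f(x,\omega)\big)\to0$ for a.e.\ $\omega$, while $\|s_k-\hat s_k\|\le 2L_K\in L^1$; dominated convergence then gives $\int\|s_k-\hat s_k\|\,dP\to0$. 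Hence $g=\lim_k\int\hat s_k\,dP$, and since every $\int\hat s_k\,dP$ lies in the closed set $\hat\partial F(x)$, we conclude $g\in\hat\partial F(x)$.

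Finally, the first-order expansion, which I expect to be the crux. Fix $x$, let $y$ range over a small ball $K$ around $x$, and take any $G\in\hat\partial F(y)$, say $G=\int s\,dP$ with $s(\omega)\in\hat\partial f(y,\omega)$. By the definition of the remainder, $F(y)-F(x)-\langle G,\,y-x\rangle=\int_\varOmega o(x,y,s(\omega),\omega)\,P(d\omega)$. Put $\rho(x,y,\omega):=\sup_{g\in\hat\partial f(y,\omega)} o(x,y,g,\omega)/\|y-x\|$. This function is measurable (a supremum of a Carath\'eodory integrand over a measurable compact-valued multifunction); it tends to $0$ as $y\to x$ for every $\omega$, by generalized differentiability of $f(\cdot,\omega)$ at $x$; and, since generalized differentiable functions are locally Lipschitz with Lipschitz constant on $K$ bounded by $L_K(\omega)$, it obeys $|\rho(x,y,\omega)|\le 2L_K(\omega)\in L^1$. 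Therefore, for every selection $s$,
\[
\frac{F(y)-F(x)-\langle G,\,y-x\rangle}{\|y-x\|}\ \le\ \int_\varOmega\rho(x,y,\omega)\,P(d\omega)\ \xrightarrow[\ y\to x\ ]{}\ 0
\]
by dominated convergence, and the bound is independent of $G$. Taking the supremum over $G\in\hat\partial F(y)$ delivers exactly the expansion demanded by Definition~\ref{d:Norkin}, so $F$ is differentiable in the generalized sense with generalized subdifferential $\hat\partial F$. The genuinely delicate point is this uniformity over all measurable selections $s$: the expansion must hold for \emph{every} $G\in\hat\partial F(y)$ at once, and that is purchased precisely by dominating $o(x,y,s(\omega),\omega)/\|y-x\|$ by the selection-free integrable majorant $\rho(x,y,\cdot)$, collapsing everything to a single application of dominated convergence — no control uniform in $\omega$ is ever required. (Should the two-sided form of the remainder condition be preferred, the identical argument goes through with $\rho$ built from $|o|$ instead of $o$, the domination $|\rho|\le 2L_K$ being unaffected.)
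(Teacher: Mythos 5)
Your proof is correct. Note first that the paper itself offers no proof of Theorem~\ref{t:expected-generalized}: it is quoted in the appendix as background from Norkin's theory, and the only trace of its argument in the paper is the remark in the proof of Lemma~\ref{l:Gamma} that the regularity of the Aumann integral ``is part of the proof of Theorem~\ref{t:expected-generalized}; see [Lem.~21.7, Mikhalevich--Gupal--Norkin].'' Your reconstruction follows exactly the classical route that reference takes: nonemptiness/convexity/compactness of $\int\hat\partial f(x,\omega)\,P(d\omega)$ from measurable selections and integrable boundedness, upper semicontinuity via projection of selections onto $\hat\partial f(x,\omega)$ plus dominated convergence (the same device appears verbatim in a commented-out passage of the paper's Lemma~\ref{l:Gamma}), and the first-order expansion by dominating the selection-dependent remainder $o(x,y,s(\omega),\omega)/\|y-x\|$ with the selection-free majorant $\rho(x,y,\cdot)\le 2L_K(\cdot)$, using the fact (available from Theorem~\ref{t:chain} applied to the segment $[x,y]\subset K$) that $L_K(\omega)$ is a Lipschitz constant of $f(\cdot,\omega)$ on the convex set $K$. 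You correctly identify the only delicate point -- uniformity of the remainder estimate over all elements of $\hat\partial F(y)$ simultaneously -- and resolve it properly. Your closing parenthetical about the one-sided versus two-sided form of the remainder in Definition~\ref{d:Norkin} is well taken: as literally written the definition asks for a genuine limit of $\sup_g o/\|y-x\|$, whose lower half your $\sup$-based $\rho$ does not deliver by itself (one would additionally pick a measurable near-maximizing selection to exhibit some $G$ with $O(x,y,G)/\|y-x\|\ge -\varepsilon$), but Norkin's intended condition is the two-sided uniform one, for which your $|o|$-variant closes the argument with no change to the domination.
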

%This allows for easy construction of stochastic generalized subgradients by simulation:
%for a randomly selected elementary event $\widetilde{\omega}$,
% we can choose (in a measurable way) $\widetilde{g}\in G_f(x,\widetilde{\omega})$. Then, $E[\widetilde{g}]\in G_F(x)$.

An essential step in the analysis of stochastic recursive algorithms by the differential inclusion method is the
\emph{chain rule on a path} (see \cite{davis2019stochastic}
and the references therein). For an absolutely continuous function $p:[0,\infty)\to\Rb^n$ we denote
by $\dt{p}(\Cdot)$ its weak derivative: a measurable function such that
\[
p(t) = p(0) + \int_0^t \dt{p}(s)\;ds,\quad \forall \;  t \ge 0.
\]
The following theorem has been recently proved in \cite{bolte2023subgradient}.
\begin{theorem}
\label{t:chain}
If the function $f:\Rb^n \to \Rb$ is differentiable in a generalized sense, then it admits the chain rule on absolutely continuous paths, that is
\begin{equation}
\label{chain-path}
f(p(T))- f(p(0)) = \int_0^T   g(p(t)) \, \dt{p}(t)  \;dt,
\end{equation}
for any absolutely continuous path $p:[0,\infty)\to \Rb^n$, all selections $g(\Cdot) \in \hat{\partial}\!f(\Cdot)$, and all $T>0$.
\end{theorem}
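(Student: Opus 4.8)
The plan is to reduce the path chain rule to two facts: that $\phi(t) \triangleq f(p(t))$ is absolutely continuous, so that the ordinary fundamental theorem of calculus applies, and that its weak derivative is given, for almost every $t$, by $\langle g(p(t)),\dt{p}(t)\rangle$ \emph{for every} measurable selection $g(\Cdot)\in\hat{\partial}f(\Cdot)$. Granting these, for any such selection
\[
f(p(T))-f(p(0)) = \phi(T)-\phi(0) = \int_0^T \dt{\phi}(t)\,\D t = \int_0^T \langle g(p(t)),\dt{p}(t)\rangle\,\D t,
\]
which is \eqref{chain-path}. Absolute continuity of $\phi$ is the easy ingredient: a function differentiable in the generalized sense is locally Lipschitz, so on the compact set $p([0,T])$ it has a Lipschitz constant $L$, whence $|\phi(t)-\phi(s)|\le L\|p(t)-p(s)\|\le L\int_s^t \|\dt{p}\|\,\D r$; since $\dt{p}\in L^1$, the function $\phi$ is absolutely continuous and $\dt{\phi}\in L^1$.

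First I would pin down $\dt{\phi}$ pointwise. Fix $t_0$ at which both $\dt{p}(t_0)=v$ and $\dt{\phi}(t_0)$ exist (almost every $t_0$). Applying the expansion of Definition \ref{d:Norkin} with base point $x=p(t_0)$ and perturbation $y=p(t_0+h)$, for any $g_h\in\hat{\partial}f(p(t_0+h))$ one has
\[
f(p(t_0+h))-f(p(t_0)) = \langle g_h,\, p(t_0+h)-p(t_0)\rangle + o(x,y,g_h).
\]
Because the remainder estimate is uniform over $g_h\in\hat{\partial}f(y)$ and $\|p(t_0+h)-p(t_0)\|=O(|h|)$, dividing by $h$ gives $\langle g_h,(p(t_0+h)-p(t_0))/h\rangle\to\dt{\phi}(t_0)$ uniformly in the choice of $g_h$. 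Since $\hat{\partial}f$ is upper semicontinuous with compact values, it is locally bounded; hence along any $h_k\to 0$ the vectors $g_{h_k}$ have accumulation points, each lying in $\hat{\partial}f(p(t_0))$, and each such limit $g_*$ satisfies $\langle g_*,v\rangle=\dt{\phi}(t_0)$. This already yields $\dt{\phi}(t_0)=\langle g_*,v\rangle$ for \emph{some} $g_*\in\hat{\partial}f(p(t_0))$.

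The main obstacle is upgrading ``for some $g_*$'' to ``for every $g\in\hat{\partial}f(p(t_0))$,'' that is, establishing that for almost every $t_0$ the value $\langle g,\dt{p}(t_0)\rangle$ is independent of the selected subgradient $g\in\hat{\partial}f(p(t_0))$. This selection-independence is exactly the property that fails for a general locally Lipschitz function (Whitney's construction gives a Lipschitz $f$ and a curve of critical points along which $f$ is not constant), so the generalized-differentiability structure must enter in an essential way here; the upper-semicontinuity used above only constrains $\hat{\partial}f$ at $p(t_0+h)$, not its possibly larger width at $p(t_0)$. The geometry to exploit is that $\hat{\partial}f(x)$ is genuinely set-valued only on a ``singular'' set $N$ (for $f=|x_1|$ the set $N=\{x_1=0\}$, where $\hat{\partial}f$ is a full segment in the $x_1$-direction), and on $N$ the directions in which $\hat{\partial}f$ has positive width are transverse to $N$; the decisive analytic fact is then that an absolutely continuous path has $\dt{p}(t)$ aligned with the tangential structure of the level sets for almost every $t$ with $p(t)\in N$ (the elementary instance being $\dt{p}_1(t)=0$ for a.e. $t$ with $p_1(t)=0$).

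Accordingly, I would carry out the crux by showing, via a forward--backward squeeze of the difference quotients combined with a Lebesgue-density argument, that the width of $\hat{\partial}f(p(t))$ in the direction $\dt{p}(t)$ vanishes for almost every $t$; equivalently, that $\hat{\partial}f$ is a \emph{conservative field} for $f$. This is the substantive content established in \cite{bolte2023subgradient} through the theory of conservative fields, and it is the step I expect to be genuinely hard. Once selection-independence holds almost everywhere, $\langle g(p(t)),\dt{p}(t)\rangle=\dt{\phi}(t)$ a.e. for every measurable selection, and the fundamental theorem of calculus closes the argument.
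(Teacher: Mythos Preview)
The paper does not supply its own proof of this theorem; it merely states the result and attributes it to \cite{bolte2023subgradient}. Your outline is therefore consistent with---and considerably more detailed than---the paper's treatment: you correctly reduce the claim to absolute continuity of $\phi=f\circ p$ together with selection-independence of $\langle g,\dt{p}(t)\rangle$ over $g\in\hat{\partial}f(p(t))$, you extract the ``for some $g_*$'' conclusion cleanly from Definition~\ref{d:Norkin} and upper semicontinuity, and you rightly isolate selection-independence (equivalently, that $\hat{\partial}f$ is a conservative field for $f$) as the genuinely hard step, deferring it to the very reference the paper invokes.
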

Formula \eqref{chain-path} was previously known for convex functions \cite{brezis1971monotonicity}, for
subdifferentially regular locally Lipschitz functions
and Whitney $\mathcal{C}^1$-stratifiable locally Lipschitz functions \cite{drusvyatskiy2015curves}, and for generalized differentiable functions
on generalized differentiable paths \cite{Rusz2019}.

\end{document}